\numberwithin{equation}{section}
\newlength{\leftstackrelawd}
\newlength{\leftstackrelbwd}
\def\leftstackrel#1#2{\settowidth{\leftstackrelawd}%
	{${{}^{#1}}$}\settowidth{\leftstackrelbwd}{$#2$}%
	\addtolength{\leftstackrelawd}{-\leftstackrelbwd}%
	\leavevmode\ifthenelse{\lengthtest{\leftstackrelawd>0pt}}%
	{\kern-.5\leftstackrelawd}{}\mathrel{\mathop{#2}\limits^{#1}}}
\theoremstyle{plain}
\newtheorem{thm}{Theorem}[section]
\newtheorem{lem}[thm]{Lemma}
\newtheorem{cor}[thm]{Corollary}
\newtheorem*{thm*}{Theorem}
\theoremstyle{definition}
\newtheorem{rmk}[thm]{Remark}
\newtheorem{?}[thm]{Problem}
\renewcommand{\phi}{\varphi}
\renewcommand{\epsilon}{\varepsilon}
\def\@cite#1#2{[\textbf{#1\if@tempswa , #2\fi}]}
\def\@biblabel#1{[\textbf{#1}]}
\newcommand*{\defeq}{\mathrel{\rlap{%
			\raisebox{0.3ex}{$\m@th\cdot$}}%
		\raisebox{-0.3ex}{$\m@th\cdot$}}%
	=}
\newcommand*{\eqdef}{=\mathrel{\rlap{%
			\raisebox{0.3ex}{$\m@th\cdot$}}%
		\raisebox{-0.3ex}{$\m@th\cdot$}}%
	}
\begin{document}
\title{Local pointwise second derivative estimates for strong solutions to the $\sigma_k$-Yamabe equation on Euclidean domains}

\author{Jonah A. J. Duncan\footnote{Mathematical Institute, University of Oxford, Andrew Wiles Building, Radcliffe Observatory Quarter, Woodstock Road, OX2 6GG, UK. Email: jonah.duncan@maths.ox.ac.uk.}~\footnote{Supported by EPSRC grant number EP/L015811/1.} ~and Luc Nguyen\footnote{Mathematical Institute and St Edmund Hall, University of Oxford, Andrew Wiles Building, Radcliffe Observatory Quarter, Woodstock Road, OX2 6GG, UK. Email: luc.nguyen@maths.ox.ac.uk.}}

\date{}

\maketitle
\begin{abstract}
	We prove local pointwise second derivative estimates for positive $W^{2,p}$ solutions to the $\sigma_k$-Yamabe equation on Euclidean domains, addressing both the positive and negative cases. Generalisations for augmented Hessian equations are also considered. 
	
	\medskip
	Keywords: second derivative estimates, $\sigma_k$-Yamabe equation.
	
	\medskip
	MSC: 35B65, 35D35, 35J15, 35J60, 53C21.
\end{abstract}

\tableofcontents

\section{Introduction}\label{3m}

Let $\Omega\subset\mathbb{R}^n$ ($n\geq 3$) be a domain. In this paper, we obtain local pointwise second derivative estimates for positive $W^{2,p}$ solutions to the equations
\begin{equation}\label{a}\tag{1.1$^+$}
\sigma_k^{1/k}(A_u(x)) = f(x,u(x),\nabla u(x))>0, \quad  \lambda( A_u(x))\in\Gamma_k^+\quad\text{for a.e. }x\in\Omega
\end{equation}
and
\begin{equation}\label{aaa}\tag{1.1$^-$}
\sigma_k^{1/k}(-A_u(x)) = f(x,u(x),\nabla u(x))>0, \quad  \lambda(- A_u(x))\in\Gamma_k^+\quad\text{for a.e. }x\in\Omega.
\end{equation}
Throughout the paper, $A_u$ denotes the symmetric matrix-valued function 
\begin{equation*}
A_u\defeq \nabla^2 u - \frac{|\nabla u|^2}{2u}I,
\end{equation*} 
where $I$ is the $n\times n$ identity matrix and $\sigma_k$ is the $k$'th elementary symmetric polynomial, defined on a symmetric matrix $A$ with eigenvalues $\lambda(A)=(\lambda_1,\dots,\lambda_n)$ by 
\begin{equation*}
\sigma_k(A) = \sigma_k(\lambda_1,\dots,\lambda_n) \defeq \sum_{1\leq {i_1}<\dots<{i_k}\leq n} \lambda_{i_1}\cdots\lambda_{i_k}.
\end{equation*}
Note that $\sigma_1(A)$ is the trace of $A$ and $\sigma_n(A)$ is the determinant of $A$. We also denote by $\Gamma_k^+$ the open convex cone 
\begin{equation*}
\Gamma_k^+ = \{(\lambda_1,\dots,\lambda_n)\in\mathbb{R}^n:\sigma_j(\lambda_1,\dots,\lambda_n)>0~\text{for all }1\leq j\leq k\}.
\end{equation*} 

It is well-known that the equations \hyperref[a]{$(1.1^\pm)$} are elliptic. Furthermore, $\sigma_k^{1/k}$ is a concave function on the set of symmetric matrices with eigenvalues in $\Gamma_k^+$.

The motivation behind \hyperref[a]{$(1.1^\pm)$} comes from conformal geometry: if $g_{ij}=u^{-2}\delta_{ij}$ is a metric conformal to the flat metric on a domain $\Omega\subset \mathbb{R}^n$, then $uA_u$ is the $(1,1)$-Schouten tensor of $g$, and the $\sigma_k$-Yamabe equation in the so-called positive/negative ($\pm$) case is given by \setcounter{equation}{1}
\begin{equation}\label{-2}
\sigma_k(\pm uA_u) = 1, \quad \lambda(\pm A_u)\in\Gamma_k^+,  \quad u>0.
\end{equation}
The equations \eqref{-2} and their counterparts on Riemannian manifolds were first studied by Viaclovsky in \cite{Via00a}. Since then, these equations have been addressed by various authors -- for a partial list of references, see \cite{Via02, CGY02a, GW03b, LL03, Han04, LL05, Che05, GW06, Wan06, Gua07, STW07, GV07, JLL07, TW09, Li09, HLT10, GLW10, CD10, LN14, San17, GS18, CW18, BS19, AE19, BCE19, Case19, FW19, HLL19, CW19, FW20, LN20, LNW20} in the positive case and \cite{GV03b, CHY05, LS05, Guan08, GSW11, Sui17, GLN18, LN20b} in the negative case. When $k=1$, these equations reduce to the original Yamabe equation. When $k\geq 2$, they are fully nonlinear and elliptic at a solution (although, a priori, not necessarily uniformly elliptic). Fully nonlinear elliptic equations involving eigenvalues of the Hessian were first considered by Caffarelli, Nirenberg and Spruck in \cite{CNS3}. 

A priori local first and second derivative estimates play an important role in the study of the $\sigma_k$-Yamabe equation, and were established in the positive case by Chen \cite{Che05}, Guan and Wang \cite{GW03b}, Jin, Li and Li \cite{JLL07}, Li and Li \cite{LL03}, Li \cite{Li09} and Wang \cite{Wan06}. In the negative case, an a priori (global) $C^1$ estimate is proven by Gursky and Viaclovsky \cite{GV03b}, but it is unknown whether a priori $C^2$ estimates hold. In this paper, we are concerned with the local regularity of positive $W^{2,p}$ solutions to the equations \hyperref[a]{$(1.1^\pm)$}. More precisely, for $2\leq k\leq n$ we derive local pointwise boundedness of second derivatives, provided $p>kn/2$ in the positive case and $p>(k+1)n/2$ in the negative case. To simplify the discussion, we do not include the case $k=1$, in which the equations \hyperref[a]{$(1.1^\pm)$} are semilinear. We prove:

\begin{thm}\label{3}
	Let $\Omega$ be a domain in $\mathbb{R}^n$ ($n\geq 3$) and let $f\in C_{\operatorname{loc}}^{1,1}(\Omega\times(0,\infty)\times\mathbb{R}^n)$ be a positive function. Suppose that $2\leq k\leq n$, $p>kn/2$ and $u\in W_{\operatorname{loc}}^{2,p}(\Omega)$ is a positive solution to \eqref{a}. Then $u\in C_{\mathrm{loc}}^{1,1}(\Omega)$, and for any concentric balls $B_{R}\subset B_{2R}\Subset\Omega$ we have \setcounter{equation}{2}
		\begin{equation*}
		\|\nabla^2 u\|_{L^\infty(B_R)}\leq C,
		\end{equation*}
		 where $C$ is a constant depending only on $n,p,R, f$ and an upper bound for $\|\ln u\|_{W^{2,p}(B_{2R})}$. 
		\end{thm}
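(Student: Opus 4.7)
The strategy is to change variables $w = \ln u$, reducing the problem to an $L^\infty$ bound on $\Delta w$, and then to obtain such a bound via a Moser-type iteration driven by the concavity of $\sigma_k^{1/k}$ and the divergence structure of the Newton tensor. After substitution, the equation becomes $\sigma_k^{1/k}(B_w) = \tilde f(x,w,\nabla w)$ with
\begin{equation*}
B_w \defeq \nabla^2 w + \nabla w \otimes \nabla w - \tfrac12 |\nabla w|^2 I, \qquad \tilde f(x,s,\xi) \defeq e^{-s} f(x, e^s, e^s \xi),
\end{equation*}
and $\lambda(B_w)\in\Gamma_k^+$. The hypothesis on $\|\ln u\|_{W^{2,p}(B_{2R})}$ gives $w \in W^{2,p}_{\mathrm{loc}}$, and since $p > kn/2 \geq n$ for $k\geq 2$, Sobolev embedding yields $w \in C^{0,\alpha}_{\mathrm{loc}}$; in particular $u$, $u^{-1}$ and $\tilde f$ are locally bounded. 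Because $\lambda(B_w)\in\Gamma_k^+$ forces $0 < \lambda_i(B_w) \leq \sigma_1(B_w)$ for each eigenvalue, it suffices to obtain an a.e.\ upper bound on the scalar $H \defeq \sigma_1(B_w) = \Delta w - (\tfrac{n}{2}-1)|\nabla w|^2$.

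To bound $H$, I would first approximate $w$ by smooth solutions $w_\varepsilon$ of mollified versions of the equation on slightly smaller balls (e.g.\ mollifying $\tilde f$ and the boundary trace of $w$), obtained via classical $C^{2,\alpha}$ theory for the Dirichlet problem, seeking estimates that depend only on $\|w\|_{W^{2,p}(B_{2R})}$. For smooth $w_\varepsilon$, differentiating the equation twice and exploiting the concavity of $\sigma_k^{1/k}$ on $\Gamma_k^+$ yields the fundamental inequality
\begin{equation*}
F^{ij}\partial_i\partial_j H_\varepsilon \;\geq\; -\,C\bigl(1 + |\nabla w_\varepsilon|^2\bigr)\bigl(1 + |\nabla^2 w_\varepsilon|\bigr)(1 + H_\varepsilon),
\end{equation*}
where $F^{ij} = \partial_{B_{ij}}\sigma_k^{1/k}(B_{w_\varepsilon})$. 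Two structural features make this usable: the Newton identity $\partial_i T_{k-1}^{ij}(\nabla^2 w_\varepsilon)=0$ recasts $F^{ij}\partial_i\partial_j$ in divergence form modulo lower-order terms involving $\nabla w_\varepsilon$, and the Maclaurin inequality combined with $\sigma_k^{1/k}(B_{w_\varepsilon}) = \tilde f_\varepsilon \geq c > 0$ gives $\mathrm{tr}(F^{ij}) \geq c_{n,k} > 0$, i.e.\ non-degeneracy in a trace sense.

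The final step is a Moser iteration: testing the inequality against $\eta^2 (H_\varepsilon^+)^{q-1}$, integrating by parts via the divergence structure, and applying the Sobolev embedding $W^{1,2}\hookrightarrow L^{2n/(n-2)}$, I expect a reverse-H\"older estimate of the form $\|H_\varepsilon^+\|_{L^{\chi q}(B_{r'})} \leq C(q)\|H_\varepsilon^+\|_{L^q(B_r)}$ on nested balls, with gain $\chi = n/(n-2)$. The cost at each step is absorbing the product $|\nabla^2 w_\varepsilon|\,(H_\varepsilon^+)^{\text{power}}$ via H\"older using the $L^p$ bound on $\nabla^2 w_\varepsilon$; tracking exponents carefully shows the iteration closes precisely when $p > kn/2$, the factor $k$ entering from the $k$-th order homogeneity of $\sigma_k$. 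This yields $\|H_\varepsilon^+\|_{L^\infty(B_R)} \leq C$ uniformly in $\varepsilon$, and passing $\varepsilon\to 0$ gives the required $L^\infty$ bound on $\nabla^2 u$.

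The main obstacle is to justify the displayed differential inequality for a solution that is only $W^{2,p}$: this demands an approximation scheme that preserves the cone condition $\lambda(B_{w_\varepsilon})\in\Gamma_k^+$ while retaining enough compactness to pass to the limit in the iteration. Equally delicate is the sharp exponent accounting in the Moser scheme which produces exactly the threshold $p > kn/2$ rather than a weaker bound — this is where the specific structure of $\sigma_k$ (as opposed to a generic elliptic operator) must be used decisively.
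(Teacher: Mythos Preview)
Your high-level architecture (concavity of $\sigma_k^{1/k}$, divergence structure of the Newton tensor, Moser iteration on $\sigma_1$) matches the paper's, but there are two genuine gaps that would prevent the argument from reaching the threshold $p>kn/2$.

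\textbf{The substitution $w=\ln u$ is counterproductive.} The paper works directly with $u$, so that the augmented Hessian is $A_u=\nabla^2 u - H[u]$ with $H(x,z,\xi)=\tfrac{1}{2z}|\xi|^2 I$ a \emph{non-negative multiple of the identity}. This is precisely the ``Case 1'' structure that drives the sharp threshold: the key cancellation (Corollary~\ref{J}, based on the discrete Bochner identity in Lemma~\ref{H'}) uses both that $H$ is a multiple of $I$ and that $H_1\geq 0$. After your substitution, $B_w=\nabla^2 w - H[w]$ with $H_{ij}(x,z,\xi)=-\xi_i\xi_j+\tfrac12|\xi|^2\delta_{ij}$, which is \emph{not} a multiple of the identity and is not convex in $\xi$. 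Within the paper's framework this lands you in the general-$H$ setting (Theorem~\ref{56} for $k=2$, Theorem~\ref{BB} for $k\geq 3$), which only yields $p>\tfrac{3n}{2}$ and $p>kn$ respectively.

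\textbf{The sharp threshold requires a cancellation you do not identify.} Your displayed inequality for $F^{ij}\partial_i\partial_j H_\varepsilon$ and the plan to ``absorb $|\nabla^2 w_\varepsilon|\,(H_\varepsilon^+)^{\text{power}}$ via H\"older'' is too crude. After integration by parts there remain two families of third-order terms: one from $\nabla_i F^{ij}$ (which is \emph{not} zero here, only controlled by $|\nabla^2 w|^{k-1}$) multiplied by $\nabla_j H$, and one from $F^{ij}$ contracted with second derivatives of $H[w]$. Estimating each separately by Cauchy's inequality and absorbing into the good term $\int F^{ij}\nabla_iH\nabla_jH$ produces an error $\int H^{q+2k-1}$, which after iteration gives only $p>kn$ (this is exactly the route of Theorem~\ref{BB}). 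The paper's Corollary~\ref{J} shows that, in the original variable $u$, these two families \emph{cancel} up to an error $\int H^{q+k-1}$, and it is this cancellation that produces $p>kn/2$. Your proposal neither states nor proves an analogue of this cancellation.

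Two smaller points: the assertion ``$\lambda(B_w)\in\Gamma_k^+$ forces $0<\lambda_i(B_w)$'' is false for $k<n$ (e.g.\ $(3,3,-1)\in\Gamma_2^+$); the correct bound $|\lambda_i|\leq\sigma_1$ follows instead from $|\lambda|^2=\sigma_1^2-2\sigma_2<\sigma_1^2$ in $\Gamma_2^+$. And the smooth approximation preserving $\lambda(B_{w_\varepsilon})\in\Gamma_k^+$ that you flag as an obstacle is genuinely hard; the paper sidesteps it entirely by working with second-order difference quotients of the $W^{2,p}$ solution, which is likely the cleaner route for you as well.
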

	
\begin{thm}\label{3'}
	Let $\Omega$ be a domain in $\mathbb{R}^n$ ($n\geq 3$) and let $f\in C_{\operatorname{loc}}^{1,1}(\Omega\times(0,\infty)\times\mathbb{R}^n)$ be a positive function. Suppose that $2\leq k\leq n$, $p>(k+1)n/2$ and $u\in W_{\operatorname{loc}}^{2,p}(\Omega)$ is a positive solution to \eqref{aaa}. Then $u\in C_{\mathrm{loc}}^{1,1}(\Omega)$, and for any concentric balls $B_{R}\subset B_{2R}\Subset\Omega$ we have
	\begin{equation*}
	\|\nabla^2 u\|_{L^\infty(B_R)}\leq C,
	\end{equation*}
	where $C$ is a constant depending only on $n,p,R, f$ and an upper bound for $\|\ln u\|_{W^{2,p}(B_{2R})}$. 
\end{thm}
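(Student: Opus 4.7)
The plan is a two-step argument: first establish the estimate for smooth positive solutions of \eqref{aaa} via a Bernstein-type argument, then extend it to $W^{2,p}$ solutions by mollification. A preliminary observation is that since $p > (k+1)n/2 \ge 3n/2 > n$, the Sobolev embedding $W^{2,p}(B_{2R}) \hookrightarrow C^{1,\alpha}(B_{2R})$ gives bounds on $u$ from above and away from zero and on $|\nabla u|$ pointwise on $B_{2R}$, all in terms of $\|\ln u\|_{W^{2,p}(B_{2R})}$; in particular, $|\nabla u|^2/(2u)$ is bounded on $B_{2R}$. Writing $\nabla^2 u = A_u + \frac{|\nabla u|^2}{2u}I$ and using the cone property that $\lambda \in \Gamma_k^+ \subset \Gamma_1^+$ implies $|\lambda_i| \le (n-1)\lambda_{\max}$ for every $i$, the problem reduces to bounding $M := \lambda_{\max}(-A_u)$ from above on $B_R$.

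For smooth $u$, I would bound $M$ by a Bernstein-type computation applied to $\Phi := \eta^{\beta} M$, where $\eta$ is a standard cutoff supported in $B_{2R}$ with $\eta \equiv 1$ on $B_R$, and $\beta$ is a large constant. At an interior maximum $x_0$ of $\Phi$, freeze the unit eigenvector $\xi$ of $-A_u(x_0)$ realizing $M(x_0)$ and pass to the smooth surrogate $\tilde M(x) := \langle -A_u(x)\xi, \xi\rangle$, which satisfies $\tilde M \le M$ with equality at $x_0$. Differentiating the equation $\sigma_k^{1/k}(-A_u) = f$ once and twice in $\xi$ and exploiting concavity of $\sigma_k^{1/k}$ on symmetric matrices with spectrum in $\Gamma_k^+$, one obtains a differential inequality of the form $F^{ij}\partial_{ij}\tilde M \ge -C(1+\tilde M)$ on a neighbourhood of $x_0$, where $F^{ij}$ is the positive-definite linearization of $\sigma_k^{1/k}$ at $-A_u$. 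Combining this with the maximum principle condition $F^{ij}\partial_{ij}(\eta^\beta \tilde M) \le 0$ at $x_0$, after absorbing cross terms in $\nabla\eta$ via Young's inequality, yields $\eta(x_0)^{\beta} M(x_0) \le C$; since $\Phi$ attains its maximum at $x_0$, this forces $M \le C$ on $B_R$.

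To pass from smooth to $W^{2,p}$ solutions, I would mollify $w := \ln u$ on a slightly smaller ball to obtain $w_\epsilon := w * \rho_\epsilon$, and set $u_\epsilon := e^{w_\epsilon}$. Then $u_\epsilon$ is smooth, converges to $u$ in $W^{2,p}_{\mathrm{loc}}$, has uniformly controlled $\|\ln u_\epsilon\|_{W^{2,p}}$, and solves \eqref{aaa} up to an error term $r_\epsilon(x)$ in the right-hand side with $r_\epsilon \to 0$ in a suitable $L^q$ norm. A version of the Bernstein estimate stable under such perturbations of $f$ applies uniformly in $\epsilon$, and passing to the limit recovers the bound on $\nabla^2 u$.

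The main obstacle is the threshold $p > (k+1)n/2$, which is sharper than the $p > kn/2$ in the positive case of Theorem~\ref{3}. In the negative case, the term $\frac{|\nabla u|^2}{2u}I$ enters $-A_u$ with the opposite sign convention, so that when computing the second derivative of $\tilde M$ in the Bernstein argument and using the once-differentiated equation to eliminate third-order terms, an additional factor involving $\nabla^2 u$ must be absorbed. Absorbing this factor via Young's inequality and Sobolev embedding costs one extra integrability exponent of $\nabla^2 u$, which is precisely what forces the stronger hypothesis $p > (k+1)n/2$. Quantifying this balance carefully, so that all constants depend only on $\|\ln u\|_{W^{2,p}(B_{2R})}$ and not on higher norms, is the principal technical difficulty.
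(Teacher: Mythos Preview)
Your proposal has two serious gaps.

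\textbf{The Bernstein step assumes an open problem.} You claim that differentiating the equation twice and using concavity yields $F^{ij}\partial_{ij}\tilde M \ge -C(1+\tilde M)$. In the negative case this is precisely what is \emph{not} known: the paper states explicitly (Remark after Theorem~\ref{3'}) that a priori $C^2$ estimates for smooth solutions of the $\sigma_k$-Yamabe equation in the negative case are an open question. The obstruction is concrete: when you differentiate $-A_u = -\nabla^2 u + \tfrac{|\nabla u|^2}{2u}I$ twice in a direction $\xi$, the term $\tfrac{|\nabla u|^2}{2u}$ produces a contribution of the form $+\tfrac{1}{u}|\nabla\partial_\xi u|^2$ on the right-hand side, i.e.\ a third-order term with the \emph{wrong sign} that cannot be absorbed by the concavity term (which has the form $-\sum |\partial_\xi A|^2$ schematically). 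In the positive case the corresponding term has the favourable sign, which is why pointwise Bernstein arguments succeed there. Your last paragraph acknowledges an extra bad term but proposes to absorb it ``via Young's inequality and Sobolev embedding''; a Bernstein argument is pointwise at a single maximum, so there is no integral norm to embed into, and no mechanism by which the threshold $p>(k+1)n/2$ could enter.

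\textbf{Mollification does not preserve the equation.} Even granting Step~1, your Step~2 fails: the equation is fully nonlinear, so $u_\epsilon$ does not satisfy $\sigma_k^{1/k}(-A_{u_\epsilon}) = f + r_\epsilon$ with any controllable $r_\epsilon$, nor is there any reason for $\lambda(-A_{u_\epsilon})$ to remain in $\Gamma_k^+$. The error ``$r_\epsilon$'' would in any case only be small in $L^q$, whereas a Bernstein argument requires the right-hand side to be $C^{1,1}$ pointwise to differentiate twice.

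The paper's route is entirely different and does not pass through a priori estimates for smooth solutions. It works directly with $W^{2,p}$ solutions via difference quotients: one derives an integral inequality by testing the (difference-quotient form of the) linearised equation against $(\tilde v^+)^{q-1}$, then identifies two higher-order integrals $(\mathrm{I}_2)_h$ (involving $\nabla_i F^{ij}$) and $(\mathrm{I}_3)_h$ (involving $\Delta^h_{ll}H$) whose bad third-order contributions are shown to \emph{cancel} against each other (Corollary~\ref{M}). This yields a reverse H\"older inequality for $\Delta u$, and Moser iteration closes provided $p>(k+1)n/2$; the extra $n/2$ over the positive case arises because, for general $H_2(x,z,\xi)$ (no convexity in $\xi$), the error in the cancellation is of order $|\nabla^2 u|^{k}$ rather than $|\nabla^2 u|^{k-1}$.
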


\begin{rmk}
	As noted above, it is unknown whether a priori $C^2$ estimates hold for solutions to the $\sigma_k$-Yamabe equation in the negative case. We also note that for the closely related $\sigma_k$-Loewner-Nirenberg problem, there exist locally Lipschitz but non-differentiable viscosity solutions -- see \cite{LN20b}. As far as the authors are aware, Theorem \ref{3'} currently provides the only available local second derivative estimate for solutions to the $\sigma_k$-Yamabe equation in the negative case. 
\end{rmk}

To put things in perspective, we note that our estimates in Theorem \ref{3} are closely related to certain analytical aspects in the work of Chang, Gursky and Yang in \cite{CGY02a}. In \cite{CGY02a}, under natural conformally invariant conditions on a Riemannian 4-manifold $(M^4,g_0)$, the authors established the existence of a metric in the conformal class $[g_0]$ whose Schouten tensor has eigenvalues in $\Gamma_2^+$. An important part of the proof in \cite{CGY02a} was to obtain $W^{2,s}$ estimates for $4<s<5$ on smooth solutions to a one-parameter family of regularised $\sigma_2$-equations (see equation \eqref{AP2} in Appendix \ref{APPA}) which are uniform with respect to the parameter. This was achieved by first obtaining a uniform $W^{1,4}$ estimate (see Theorem 3.1 in \cite{CGY02a}), and subsequently carrying out an integrability improvement argument (see Sections 5 and 6 in \cite{CGY02a}). With the $W^{2,s}$ estimate in hand, the authors then applied a heat flow argument to obtain the desired conformal metric. 

\begin{rmk} A natural question to ask is whether the heat flow argument in \cite{CGY02a} can be avoided by instead taking the regularisation parameter directly to zero. One application of Theorem \ref{3} above and \cite[Proposition 5.3]{LN20} is that this can be achieved when $(M^4,g_0)$ is locally conformally flat. We refer the reader to Appendix \ref{APPA} for the details. 
\end{rmk}

 Our work is also closely related to the work of Urbas in \cite{Urb00}, where local pointwise second derivative estimates for $W^{2,p}$ solutions to the $k$-Hessian equation
\begin{equation*}
\sigma_k^{1/k}(\nabla^2 u(x)) = f(x)>0, \quad \lambda(\nabla^2 u(x))\in\Gamma_k^+
\end{equation*}
were established on domains in $\mathbb{R}^n$. At the heart of Urbas' proof is also an integrability improvement argument, assuming an initial lower bound of $p>kn/2$ (see also \cite{Urb01, Urb88, CM17, BL, Urb07}). By an application of Moser iteration, the $C^{1,1}_{\operatorname{loc}}$ estimate is then obtained. We note that Moser iteration has previously been utilised in the context of the $\sigma_k$-Yamabe equation to establish local boundedness of solutions, see for instance \cite{Han04, Gonz05, Gonz06}.

We will in fact prove a more general version of Theorems \ref{3} and \ref{3'}, and consider an operator of the form
\begin{equation}\label{aa}
A_H[u] \defeq \nabla^2 u - H[u]
\end{equation}
in place of $\pm A_u$. Here, $H[u](x)=H(x,u(x),\nabla u(x))$ for a given matrix-valued function $H=H(x,z,\xi)\in C_{\operatorname{loc}}^{1,1}(\Omega\times\mathbb{R}\times\mathbb{R}^n\,;\, \operatorname{Sym}_n(\mathbb{R}))$, where $\operatorname{Sym}_n(\mathbb{R})$ denotes the space of real symmetric $n\times n$ matrices. Rather than \hyperref[a]{$(1.1^\pm)$}, we consider the equation 
\begin{equation}\label{7}
\sigma_k^{1/k}\big(A_H[u](x)\big) = f(x,u(x),\nabla u(x))>0, \quad \lambda(A_H[u](x)) \in\Gamma_k^+  \quad \mathrm{for~a.e.~}x\in\Omega,
\end{equation}
where $f\in C^{1,1}_{\operatorname{loc}}(\Omega\times\mathbb{R}\times\mathbb{R}^n)$. 

It is clear that if $u$ satisfies \eqref{a} with $u\geq \frac{1}{C}>0$, then $u$ satisfies \eqref{7} provided $H(x,z,\xi) = \frac{|\xi|^2}{2z}I$ for $z\geq\frac{1}{C}$. Likewise, if $u\in W^{2,p}_{\operatorname{loc}}(\Omega)$ is a solution to \eqref{aaa} with right hand side (RHS) $f$ and $u\geq \frac{1}{C}>0$, then $w\defeq - u \in W_{\operatorname{loc}}^{2,p}(\Omega)$ satisfies \eqref{7} with RHS $\tilde{f}(x,z,\xi) \defeq f(x,-z,-\xi)$, provided $H(x,z,\xi) = \frac{|\xi|^2}{2z}I$ for $z\leq-\frac{1}{C}$. Therefore, for the purpose of obtaining Theorems \ref{3} and \ref{3'}, it will suffice to consider the case that $H$ is a multiple of the identity matrix:

\begin{thm}\label{8}
	Let $\Omega$ be a domain in $\mathbb{R}^n$ ($n\geq 3$), $f\in C_{\operatorname{loc}}^{1,1}(\Omega\times\mathbb{R}\times\mathbb{R}^n)$ a positive function and $H\in C_{\operatorname{loc}}^{1,1}(\Omega\times\mathbb{R}\times\mathbb{R}^n\,;\,\operatorname{Sym}_n(\mathbb{R}))$. Suppose $2\leq k\leq n$, $p\geq 1$ and $u\in W_{\operatorname{loc}}^{2,p}(\Omega)$ is a solution to \eqref{7}, and that one of the following conditions holds:
	\begin{enumerate}
		\item $H(x,z,\xi) = H_1(x,z)|\xi|^2 I$ with $H_1\geq 0$ and $p>\frac{kn}{2}$,
		\item $H(x,z,\xi) = H_2(x,z,\xi)I$ and $p>\frac{(k+1)n}{2}$. 
	\end{enumerate}
	Then $u\in C^{1,1}_{\operatorname{loc}}(\Omega)$, and for any concentric balls $B_{R}\subset B_{2R} \Subset \Omega$ we have
	\begin{equation}\label{63}
	\|\nabla^2 u\|_{L^\infty(B_R)}\leq C,
	\end{equation}
	where $C$ is a constant depending only on $n,p,R, f, H$ and an upper bound for $\|u\|_{W^{2,p}(B_{2R})}$. 
\end{thm}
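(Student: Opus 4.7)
My plan is to adapt the integrability-improvement strategy developed by Urbas in \cite{Urb00} for the $k$-Hessian equation, to the operator $\sigma_k^{1/k}(\nabla^2 u - H[u])$, using the structural hypothesis that $H$ is a scalar multiple of the identity to control terms that break the divergence-free character of the linearization. Since $u$ is only in $W^{2,p}_{\mathrm{loc}}$, the first step is to reduce to smooth solutions via an approximation procedure (for instance, mollification coupled with concavity of $\sigma_k^{1/k}$, or approximation by classical solutions of a regularised Dirichlet problem). It then suffices to prove \eqref{63} for smooth solutions with a constant depending only on the quantities allowed in the statement, and let the regularisation parameter tend to zero.

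Fix a smooth solution. Write $F(M)=\sigma_k^{1/k}(M)$ and $F^{ij}=\partial F/\partial M_{ij}$, both evaluated at $M=A_H[u]$. Ellipticity of the cone $\Gamma_k^+$ gives that $(F^{ij})$ is positive definite with $\operatorname{tr}(F^{ij})$ bounded away from zero, while concavity of $F$ on $\{\lambda\in\Gamma_k^+\}$ implies that $F^{ij,kl}$ is negative semidefinite on symmetric tensors. Differentiating \eqref{7} twice in the same coordinate direction, summing over $\ell$, and discarding the non-positive concavity contribution, one obtains after converting $x$-derivatives of $H$ and $f$ via the chain rule that
\begin{equation*}
F^{ij}\,\partial_i\partial_j\bigl(\operatorname{tr} A_H[u]\bigr)\;\geq\;-C\bigl(1+|\nabla^2 u|^{2}\bigr),
\end{equation*}
where $C$ absorbs the $C^{1,1}$ data of $f$ and $H$ on the relevant compact set and any pointwise bound on $|\nabla u|$. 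Since $H$ is a multiple of the identity, $\operatorname{tr} A_H[u]=\Delta u - nH_0$, so this is an inequality for $\Delta u$ modulo tame lower order corrections.

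Setting $w=\operatorname{tr} A_H[u]+M$ with $M$ chosen so that $w\geq 1$, I would test this inequality against $w^{q-1}\eta^{2}$ for a cutoff $\eta$ supported in $B_{2R}$ and integrate once by parts. Positivity of $(F^{ij})$ yields a Caccioppoli-type estimate
\begin{equation*}
(q-1)\int F^{ij}\partial_i w\,\partial_j w\,w^{q-2}\eta^{2}\,dx \;\leq\; C\int\bigl(1+|\nabla^{2}u|^{2}\bigr)w^{q-1}\eta^{2}\,dx + \text{(cutoff and divergence-error terms)}.
\end{equation*}
Combining with a lower bound on the smallest eigenvalue of $(F^{ij})$ coming from standard properties of $\sigma_k^{1/k}$ on $\Gamma_k^+$, and applying the Sobolev inequality to $w^{q/2}\eta$, one obtains a reverse-H\"older improvement of the form $\|w\|_{L^{\chi q}}\leq C\|w\|_{L^{q}}^{\theta}$ with $\chi>1$ independent of $q$. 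The thresholds $p>kn/2$ in case~$(1)$ and $p>(k+1)n/2$ in case~$(2)$ are calibrated precisely so that the first iteration step is admissible starting from $|\nabla^{2}u|\in L^{p}$; once initiated, Moser iteration delivers $w\in L^{\infty}_{\mathrm{loc}}$, and hence the required $C^{1,1}$ bound.

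The principal obstacle will be the divergence-error terms. Unlike for the pure $k$-Hessian equation, $F^{ij}(A_H[u])$ is not divergence-free: integration by parts produces extra terms of the shape $F^{ij,kl}\partial_i H_{kl}\,\partial_j w$. The assumption that $H$ is a scalar multiple of $I$ is what permits these to be absorbed into the leading positive quadratic form via Cauchy-Schwarz, leaving a residue controlled by $|\nabla u|^{2}$ and $|\nabla^{2}u|$. In case~$(1)$, the quadratic growth $H=H_1|\xi|^{2}I$ means the absorption closes at $p>kn/2$; in case~$(2)$, the extra gradient dependence of $H_2$ costs one additional factor of $|\nabla^{2}u|$ in the Cauchy-Schwarz step, forcing the stricter threshold $p>(k+1)n/2$. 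Carrying out this bookkeeping, and verifying that the smoothing procedure produces uniform constants, is where the bulk of the technical work will lie.
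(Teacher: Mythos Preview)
Your overall architecture is right, but there are two genuine gaps that prevent the argument from closing at the stated thresholds.

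First, the pointwise inequality you write,
\[
F^{ij}\,\partial_i\partial_j\bigl(\operatorname{tr} A_H[u]\bigr)\;\geq\;-C\bigl(1+|\nabla^2 u|^{2}\bigr),
\]
is not correct when $H$ depends on $\xi=\nabla u$. Differentiating the equation twice produces $F^{ij}\Delta(H[u])_{ij}$, and the chain rule applied to $H(x,u,\nabla u)$ yields a term $\frac{\partial H_{ij}}{\partial\xi_a}[u]\,\nabla_a\Delta u$ of genuinely third order in $u$; this cannot be bounded pointwise by $C(1+|\nabla^2 u|^2)$. A parallel third-order term arises after integration by parts from $\nabla_i F[u]^{ij}$, which is no longer zero. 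So you in fact have two distinct third-order contributions to account for, not just the divergence error.

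Second, and more seriously, the mechanism you propose for handling these terms---absorption into the positive form $F^{ij}\partial_i w\,\partial_j w$ via Cauchy--Schwarz---does work, but it is too lossy to give $p>kn/2$ or $p>(k+1)n/2$. After Cauchy--Schwarz the residual term is of size $\int w^{q}\operatorname{tr}(A_H)\,|\operatorname{div} F|^2$, and since $|\operatorname{div} F|\lesssim (\Delta u+C_1)^{k-1}$ this forces $J_h^{(q+2k-1)}$ on the right-hand side; the reverse-H\"older iteration then only starts from $p>kn$. The paper proves exactly this crude bound as a separate result (Theorem~\ref{BB}). To reach the thresholds in Theorem~\ref{8} you need a different idea: compute each of the two third-order terms precisely enough to see that their leading parts \emph{cancel} against one another, leaving a residue controlled by $J_h^{(q+k-1)}$ (Case~1) or $J_h^{(q+k)}$ (Case~2). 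This cancellation is the heart of the proof and relies on the explicit divergence formula $\nabla_i F^{ij}=-(n-k+1)\,\nabla_i(H_2[u])\,T_{k-2}(A_H)^{ij}$ for $H=H_2 I$; see Lemmas~\ref{83}, \ref{83'}, \ref{71} and Corollaries~\ref{J}, \ref{M}.

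A smaller point: the reduction to smooth solutions by mollification is delicate because the mollified function no longer solves \eqref{7}. The paper bypasses this by working directly with second difference quotients $\Delta^h_{ll}u$ of the $W^{2,p}$ solution, which is the natural substitute at this regularity.
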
 

\begin{rmk}\label{64}
	The constant $C$ in \eqref{63} depends only on $n,p,R$ and upper bounds for $\| u\|_{W^{2,p}(B_{2R})}$, $\|H\|_{C^{1,1}(\Sigma)}$ and $ \|\ln f\|_{C^{1,1}(\Sigma)}$, where $\Sigma \defeq \overline{B}_{2R}\times[-M,M]\times \overline{B}_{M}(0)\subset\Omega\times\mathbb{R}\times\mathbb{R}^n$ and $M \geq \|u\|_{C^1(\overline{B}_{2R})}$. Note that since $p>n$ in Theorem \ref{8}, an upper bound for $\| u\|_{W^{2,p}(B_{2R})}$ implies an upper bound for $\| u\|_{C^1(\overline{B}_{2R})}$, in light of the Morrey embedding theorem. 
\end{rmk}
\begin{rmk}
	When $H\equiv 0$ and $f=f(x)$, Theorem \ref{8} was proved in \cite[Theorem 1.6]{Urb00}.
\end{rmk}

The matrix $A_H[u]$ introduced in \eqref{aa} is sometimes referred to as an augmented Hessian of $u$. The corresponding augmented Hessian equations have been extensively studied in recent years -- see \cite{JT17, JT18, JT19} and the references therein. In this vein, it is therefore of interest to generalise Theorem \ref{8} to arbitrary $H\in C_{\operatorname{loc}}^{1,1}$. As we will see, the proof of Theorem \ref{8} uses some favourable divergence structure in the case that $H$ is a multiple of the identity matrix. However, when $k=2$, a similar divergence structure holds for general $H$ and we obtain the following:

\begin{thm}\label{56}
		Let $\Omega$ be a domain in $\mathbb{R}^n$ ($n\geq 3$), $f\in C_{\operatorname{loc}}^{1,1}(\Omega\times\mathbb{R}\times\mathbb{R}^n)$ a positive function and $H\in C_{\operatorname{loc}}^{1,1}(\Omega\times\mathbb{R}\times\mathbb{R}^n\,;\,\operatorname{Sym}_n(\mathbb{R}))$. Suppose $p>\frac{3n}{2}$ and $u\in W_{\operatorname{loc}}^{2,p}(\Omega)$ is a solution to \eqref{7} with $k=2$. Then $u\in C^{1,1}_{\mathrm{loc}}(\Omega)$, and for any concentric balls $B_{R}\subset B_{2R} \Subset \Omega$ we have
		\begin{equation*}
		\|\nabla^2 u\|_{L^\infty(B_R)}\leq C,
		\end{equation*}
		where $C$ is a constant depending only on $n,p,R,f,H$ and an upper bound for $\| u\|_{W^{2,p}(B_{2R})}$.
	\end{thm}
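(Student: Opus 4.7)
The plan is to parallel the proof of Theorem \ref{8}, exploiting the fact that in the $k=2$ case a divergence-like structure analogous to the one used in the scalar multiple-of-identity case persists even when $H$ is a general symmetric matrix. The starting point is the algebraic identity
\begin{equation*}
2\sigma_2(A_H[u]) = (\operatorname{tr} A_H[u])^2 - |A_H[u]|^2 = T_1^{ij}\,(A_H[u])_{ij},
\end{equation*}
where $T_1^{ij} \defeq \sigma_1(A_H[u])\,\delta^{ij} - (A_H[u])^{ij}$ is the first Newton tensor of $A_H[u]$. The reason $k=2$ is special is that $T_1$ is \emph{linear} in $A_H[u]$, so the formal divergence $\partial_i T_1^{ij}$ reduces to $\partial_i (H[u])^{ij} - \partial_j \operatorname{tr}(H[u])$, with no genuine third-derivative contribution from $u$ itself.

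I would then rewrite the equation as $T_1^{ij}\,u_{ij} = 2f^2 + T_1^{ij}\,H^{ij}$ and test it against functions of the form $\eta^{2\beta} w^{q-1}$, where $w$ is a quantity comparable to $|A_H[u]|^2$ (hence to $|\nabla^2 u|^2$ modulo lower-order pieces), $\eta$ is a smooth cutoff, and $\beta,q$ are parameters to be iterated. Integrating by parts twice and expanding
\begin{equation*}
\partial_i H^{ij} = H^{ij}_{,x_i} + H^{ij}_{,z}\,u_i + H^{ij}_{,\xi_k}\,u_{ki},
\end{equation*}
together with the analogous expansion of $\partial_j \operatorname{tr} H$, produces terms linear in $\nabla^2 u$ with $C^{0,1}$ coefficients in $(x,u,\nabla u)$. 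Using Cauchy--Schwarz and the ellipticity $\lambda(A_H[u]) \in \Gamma_2^+$, these extra terms can be absorbed into the principal quadratic expression in $\nabla w$, yielding a Caccioppoli-type inequality of the shape
\begin{equation*}
\int \eta^{2\beta}\, w^{q-2}|\nabla w|^2 \le C(\beta,q)\int_{B_{2R}}\bigl(w^{q+1} + \text{lower-order terms}\bigr).
\end{equation*}

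With this estimate in hand, I would then run a Moser iteration exactly as in \cite{Urb00} and in the proof of Theorem \ref{8}: starting from $w \in L^{p/2}_{\operatorname{loc}}$ (which is provided by $u \in W^{2,p}_{\operatorname{loc}}$) and iterating over growing exponents, the integrability of $w$ is upgraded to $L^\infty_{\operatorname{loc}}$. The hypothesis $p > 3n/2$ is precisely what is needed to initiate and close the iteration in the presence of the first-order-in-$\nabla^2 u$ error terms coming from $\partial_i H^{ij}$; it plays the same role as the threshold $p > (k+1)n/2$ in Condition~(2) of Theorem \ref{8}. Once the bound $\|\nabla^2 u\|_{L^\infty(B_R)} \le C$ is obtained, $u \in C^{1,1}_{\operatorname{loc}}(\Omega)$ is immediate from Morrey's theorem.

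The main technical obstacle is the bookkeeping in the Caccioppoli step: the contributions $H^{ij}_{,\xi_k}\,u_{ki}$ and its analogues act on $\nabla^2 u$ by contraction in a way that interacts non-trivially with the Newton tensor $T_1$, and one must verify that these bad second-order terms can be strictly dominated -- not merely bounded -- by the principal gradient term $\int \eta^{2\beta} w^{q-2}|\nabla w|^2$, with constants that remain under control as $q\to\infty$. The linearity of $T_1$ in the Hessian is what makes this possible when $k=2$; for $k\ge 3$ the corresponding Newton tensor $T_{k-1}$ depends nonlinearly on $A_H[u]$, the divergence error is quadratic or higher in $\nabla^2 u$, and the present argument does not directly extend, which is consistent with the fact that the authors only state this general-$H$ result at $k=2$.
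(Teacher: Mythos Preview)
There is a genuine gap. You correctly identify the key structural fact for $k=2$: the linearised operator $F=T_1$ is affine in $A_H[u]$, so $\nabla_i F^{ij}=\nabla_i H[u]^{ij}-\nabla^j\operatorname{tr}H[u]$ carries no third derivatives of $u$. But your proposed treatment of the resulting error terms --- absorbing the contributions $H^{ij}_{,\xi_a}u_{ai}$ into the principal gradient term via Cauchy--Schwarz --- is precisely the crude route the paper discusses in Section~\ref{79} and executes in Theorem~\ref{BB}. After Cauchy's inequality those terms contribute like $\int(\tilde v^+)^q\operatorname{tr}(A_H)\lvert\operatorname{div}F\rvert^2$, which for $k=2$ is of order $J_h^{(q+3)}=J_h^{(q+2k-1)}$ rather than $J_h^{(q+2)}=J_h^{(q+k)}$. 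The reverse H\"older inequality then closes only for $p>kn=2n$, not $p>3n/2$. A secondary issue is that testing the \emph{undifferentiated} equation $T_1^{ij}u_{ij}=2f^2+T_1^{ij}H_{ij}$ against $\eta^{2\beta}w^{q-1}$ does not by itself produce a Caccioppoli inequality for $w$: one must first take second difference quotients and use concavity of $\sigma_2^{1/2}$ to obtain a subsolution-type inequality for $v=\sum_l\Delta_{ll}^h u$ (Lemma~\ref{CC}).

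What the paper actually does to reach $p>3n/2$ is more delicate: rather than absorbing the two bad integrals $(\mathrm{I}_2)_h$ (from $\nabla_i F^{ij}\nabla_j\tilde v$) and $(\mathrm{I}_3)_h$ (from $F^{ij}\Delta_{ll}^h H_{ij}$) separately, it shows that their higher-order parts \emph{cancel} against one another up to $J_h^{(q+2)}$ (Corollary~\ref{N}). After combining Lemmas~\ref{84} and~\ref{71}, the residual term has the form
\[
\frac{1}{q}\int\eta\,\frac{\partial H_{ij}}{\partial\xi_a}[u]\Big(\nabla^i\nabla_a u\,\nabla^j(\tilde v^+)^q-\nabla^i\nabla^j u\,\nabla_a(\tilde v^+)^q\Big),
\]
and this is handled by an antisymmetry trick (Lemma~\ref{40}): the coefficient $B^a_j=\eta(\partial_{\xi_a}H_{ij}-\partial_{\xi^j}H_i^{\,a})$ is antisymmetric in $(a,j)$, so after one integration by parts the would-be third-order contribution $B^a_j\nabla_a\nabla^j(\nabla_i u)$ vanishes identically, leaving only $\int\lvert\operatorname{div}B\rvert\,\lvert\nabla^2 u\rvert(\tilde v^+)^q\lesssim J_h^{(q+2)}$. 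This cancellation, not a Cauchy--Schwarz absorption, is what yields the exponent $q+k$ on the right-hand side and hence the threshold $p>(k+1)n/2=3n/2$.
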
 

\begin{rmk}
	In \cite{JT17, JT18, JT19} and the references therein, it is usually assumed that $H$ satisfies a so-called co-dimension one convexity condition, which is known to be a necessary and sufficient condition to obtain $C^1$ estimates -- see \cite{MTW, TW09b, Loe}. We point out that we do \textit{not} assume a co-dimension one convexity condition in our treatment of second derivative estimates (the exception is Case 1 of Theorem \ref{8}, where we have convexity in $\xi$). 
\end{rmk}

Under a stronger assumption on $p$, we will also obtain an extension of Theorem \ref{56} to the case $k\geq 3$ -- see Section \ref{BA}.

 In adapting the methods of \cite{Urb00} to prove Theorems \ref{8} and \ref{56}, we will need to deal with the term $H[u]$ which, whilst being of lower order in the definition of $A_H[u]$, creates new higher order terms in our estimates. Roughly speaking, the two terms which are formally problematic consist of:
 \begin{itemize}
 	\item[\textbf{(i)}] a contraction of the linearised operator 
 	\begin{equation}\label{b}
 	F[u]^{ij} \defeq \frac{\partial\sigma_k(A_H[u])}{\partial (A_H[u])_{ij}}
 	\end{equation}
 	with double difference quotients of $H[u]_{ij}$ (this arises as a result of taking difference quotients of \eqref{7} twice), and
 	\item[\textbf{(ii)}] the divergence of $F[u]^{ij}$ multiplied by a term formally of third order in $u$ (this arises after integrating by parts). 
 \end{itemize}
In \cite{Urb00}, neither of these terms exist since $F[u]^{ij}$ is divergence-free when $H\equiv 0$. In the more general case that we are considering, it is unclear whether these third order terms have a favourable sign individually. However, we will estimate them so as to show that, when combined, they yield a cancellation phenomenon that ensures the overall higher order contribution is positive. For the estimates of the higher order terms arising from the divergence of $F[u]^{ij}$, see Lemmas \ref{83} and \ref{83'}, and for those arising from the double difference quotients of $H[u]$, see Lemma \ref{71}. For the resulting cancellation phenomena, see Corollaries \ref{J}, \ref{M} and \ref{N}.

 We close the introduction by noting that in Theorems \ref{3} and \ref{3'}, we do not know whether our lower bounds on $p$ to obtain $C^{1,1}_{\operatorname{loc}}$ regularity are sharp, and it would be interesting to determine the sharp lower bounds. In the case of the $k$-Hessian equation for $3\leq k\leq n$, it is shown by Urbas in \cite{Urb90} that there exist $W^{2,p}$-strong solutions with $p<\frac{k(k-1)}{2}$ which fail to be $C^{1,\alpha}_{\operatorname{loc}}$ for any $\alpha>1-\frac{2}{k}$. Other lower bounds on $p$ leading to $C^{1,1}_{\operatorname{loc}}$ regularity for $k$-Hessian equations have been studied in \cite{Urb01, Urb88, CM17, BL, Urb07}, for instance. \newline

The plan of the paper is as follows. We begin in Section \ref{79} with an outline of the proof of Theorems \ref{8} and \ref{56}. This prompts us to consider the divergence structure of the linearised operator, which we address in Section \ref{80}, and also motivate the estimates established from Section \ref{81} onwards. In Section \ref{81} we carry out the main body of our integral estimates. In Section \ref{82}, we use these estimates and the Moser iteration technique to obtain the desired $C^{1,1}_{\operatorname{loc}}$ estimates, completing the proofs of Theorems \ref{8} and \ref{56}. In Section \ref{BA}, we give the aforementioned extension of Theorem \ref{56} to the case $k\geq 3$.

\section{Outline of the proofs of Theorems \ref{8} and \ref{56}}\label{79}

Our proofs of Theorems \ref{8} and \ref{56} use an integrability improvement argument, from which the $C^{1,1}_{\operatorname{loc}}$ estimate is obtained by the Moser iteration technique. In Case 1 of Theorem \ref{8}, we will obtain, for a solution $u\in W_{\operatorname{loc}}^{2,q+k-1}(\Omega)$ to \eqref{7} with $q>\frac{kn}{2}-k+1$, the estimate 
\begin{equation}\label{AB}
\bigg(\int_{B_{R+\rho}}(\Delta u+C_1)^{\beta q}\bigg)^{1/\beta}\leq \frac{Cq}{\rho^2}\int_{B_{R+3\rho}}(\Delta u+C_1)^{q+k-1},
\end{equation}
where $\rho\in (0,\frac{R}{3}]$, $\beta = \frac{kn}{kn-2k+2}$ and $C_1$ is a positive constant ensuring $\Delta u + C_1\geq 1$ a.e. (see the paragraph after Remark \ref{101} for the justification of the existence of $C_1$).  Similarly, in Case 2 of Theorem \ref{8} and in Theorem \ref{56}, we will obtain, for a solution $u\in W_{\operatorname{loc}}^{2,q+k}(\Omega)$ to \eqref{7} with $q>\frac{(k+1)n}{2}-k$, the estimate 
\begin{equation}\label{AB'}
\bigg(\int_{B_{R+\rho}}(\Delta u+C_1)^{\beta q}\bigg)^{1/\beta}\leq \frac{Cq}{\rho^2}\int_{B_{R+3\rho}}(\Delta u+C_1)^{q+k},
\end{equation}
now with $\beta = \frac{(k+1)n}{(k+1)n  - 2(k+1)+2}$. The estimates \eqref{AB} and \eqref{AB'} then yield an improvement in integrability under the respective lower bounds on $q$, which can then be iterated to yield the desired $C^{1,1}_{\operatorname{loc}}$ estimates.\footnote{One might ask whether a reverse H\"older-type inequality for a single second derivative $\nabla_l\nabla_l u$, similar to \eqref{AB} and \eqref{AB'}, can be established. We have been unable to show this.}

In the rest of this section we explain how the estimates \eqref{AB} and \eqref{AB'} are obtained. Due to the lack of regularity, we derive our estimates through taking difference quotients of the equation \eqref{7}. For an index $l\in\{1,\dots,n\}$ and increment $h\in\mathbb{R}\backslash\{0\}$, we recall the first order difference quotient $\nabla^h_l u(x) \defeq h^{-1}(u(x+he_l)-u(x))$ and the second order difference quotient
\begin{equation}\label{s4}
\ \Delta_{ll}^h u(x) \defeq \nabla_l^h(\nabla^{-h}_l u(x)) = \frac{u(x+he_l)-2u(x) + u(x-he_l)}{h^2}.
\end{equation}
We also denote
\begin{equation*}
v_h(x) \defeq \sum_{l=1}^n \Delta_{ll}^h u(x).
\end{equation*}
The above expressions are well-defined for $x\in\Omega_h\defeq\{y\in\Omega:\operatorname{dist}(y,\partial\Omega)>|h|\}$. 

It is well-known (see, for instance, \cite[Lemma 7.23]{GT}) that 
\begin{equation}\label{60}
\|\nabla_l^h u\|_{L^s(\Omega')} \leq \|\nabla_l u \|_{L^s(\Omega)} \quad\text{for all } s\geq 1 \text{ and } \Omega'\Subset\Omega \text{ s.t. }\operatorname{dist}(\Omega',\partial\Omega)>|h|.
\end{equation}
It follows from \eqref{s4} and \eqref{60} that there exists a constant $C=C(n)$ such that 
\begin{equation}\label{3g'}
\|v_h\|_{L^s(\Omega')} \leq C\|\nabla^2 u \|_{L^s(\Omega)}\quad \text{for all }s\geq 1.
\end{equation}
We will also use the following fact -- see Appendix \ref{APPB} for a proof:

\begin{lem}\label{3g}
Suppose $u \in W^{2,s}(\Omega)$ for some $s\geq1$. Then $v_h \rightarrow \Delta u$ in $L^s_{\mathrm{loc}}(\Omega)$ as $h\rightarrow 0$.
\end{lem}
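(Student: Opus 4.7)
The plan is to establish an integral representation of the second-order difference quotient in terms of the second weak derivative, then apply $L^s$-continuity of translations together with a density or Fubini argument. The key identity I would aim for is
\begin{equation*}
\Delta_{ll}^h u(x) = \frac{1}{h^2}\int_0^h\!\!\int_0^h \nabla_l \nabla_l u(x+(s-\tau)e_l)\, d\tau\, ds,
\end{equation*}
which is immediate for $u\in C^2$ by applying the fundamental theorem of calculus twice to the forward difference $u(x+he_l)-u(x)$ and the backward difference $u(x)-u(x-he_l)$ and combining.

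To extend this identity to $u\in W^{2,s}(\Omega)$ I would approximate $u$ by smooth functions $u_m \to u$ in $W^{2,s}_{\mathrm{loc}}(\Omega)$. On any subdomain $\Omega'$ with $\mathrm{dist}(\Omega',\partial\Omega)>|h|$, the left-hand side $\Delta_{ll}^h u_m$ converges in $L^s(\Omega')$ to $\Delta_{ll}^h u$ (using \eqref{s4} and the convergence $u_m \to u$ in $L^s$ on a slightly larger set), while the right-hand side converges in $L^s(\Omega')$ to the corresponding integral, by Fubini's theorem and the convergence $\nabla_l\nabla_l u_m \to \nabla_l \nabla_l u$ in $L^s_{\mathrm{loc}}$. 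Passing to the limit gives the identity almost everywhere on $\Omega_h$.

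With the identity in hand, writing $w_l := \nabla_l\nabla_l u\in L^s_{\mathrm{loc}}(\Omega)$, I would estimate, for $\Omega'\Subset\Omega$ and $|h|$ small enough,
\begin{equation*}
\bigl\|\Delta_{ll}^h u - w_l\bigr\|_{L^s(\Omega')}^s
= \int_{\Omega'}\biggl|\frac{1}{h^2}\int_0^h\!\!\int_0^h \bigl(w_l(x+(s-\tau)e_l)-w_l(x)\bigr)\,d\tau\,ds\biggr|^s dx,
\end{equation*}
and apply Jensen's inequality to the normalised averaging measure $\frac{1}{h^2}\,d\tau\,ds$ on $[0,h]^2$, followed by Fubini, to obtain
\begin{equation*}
\bigl\|\Delta_{ll}^h u - w_l\bigr\|_{L^s(\Omega')}^s
\leq \sup_{|\rho|\leq |h|}\int_{\Omega'}\bigl|w_l(x+\rho e_l) - w_l(x)\bigr|^s\,dx.
\end{equation*}
The right-hand side tends to zero as $h\to 0$ by the standard continuity-of-translations property in $L^s_{\mathrm{loc}}$. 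Summing over $l=1,\dots,n$ yields $v_h\to\Delta u$ in $L^s(\Omega')$.

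The only genuinely delicate step is the passage from smooth $u$ to $W^{2,s}$ in the integral identity, where one must be slightly careful since $\nabla_l\nabla_l u$ is only defined almost everywhere and is being integrated along one-dimensional slices; however, this is resolved cleanly by the smooth approximation above, so I do not expect any real obstacle. The rest is a standard mollification-type computation.
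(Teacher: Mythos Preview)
Your argument is correct and follows essentially the same route as the paper's proof in Appendix \ref{APPB}: an integral representation of $\Delta_{ll}^h u$ in terms of $\nabla_l\nabla_l u$, a density reduction to $u\in C^2$, and then continuity of translations in $L^s$. The only cosmetic differences are that the paper uses the Taylor remainder form $\int_0^1(1-t)\nabla_l\nabla_l u(x\pm the_l)\,dt$ rather than your double integral $\tfrac{1}{h^2}\int_0^h\!\int_0^h$, and extracts the $L^s$ bound by duality against $g\in L^{s'}$ with H\"older's inequality rather than by Jensen and Fubini as you do.
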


We assume now that both the increment $h$ and our solution $u$ are fixed, and write $v$ as shorthand for $v_h$. Taking difference quotients of the equation $\sigma_k^{1/k}(A_H[u](x))=f[u](x)\defeq f(x,u(x),\nabla u(x))$ and appealing to the concavity of $\sigma_k^{1/k}$ in $\Gamma_k^+$, we will derive (at the start of Section \ref{81}) the pointwise estimate
\begin{align}\label{3a'}
\sum_l k(f[u])^{k-1}\Delta_{ll}^h f[u]& \leq F[u]^{ij}\nabla_i\nabla_j v - \sum_l F[u]^{ij}\Delta_{ll}^h (H[u])_{ij}\quad\mathrm{a.e.~in~}\Omega_h. 
\end{align}
Here, $F[u]^{ij}=\partial\sigma_k(A_H[u])/\partial (A_H[u])_{ij}$ is the linearised operator.

\begin{rmk}\label{100}
	 In \eqref{3a'}, and from this point onwards, summation notation is employed \textit{only over repeated indices which appear in both upper and lower positions}. Positioning of indices is purely to indicate whether summation convention is being utilised; since we are working with the Euclidean metric, we are free to raise and lower indices at will. For instance, $A_{ij}$, $A^i_j$, $A^j_i$ and $A^{ij}$ all denote the $(i,j)$-entry of a symmetric matrix $A$. Similarly, we do not distinguish between the derivatives $\nabla^i$ and $\nabla_i$ when using index notation. 
\end{rmk}
\begin{rmk}\label{101}
	Since $u$ is fixed, we write $f[u], H[u], A_H[u], F[u]^{ij}$ etc. to emphasise that these are to be considered as functions of $x$. If it is clear from the context (e.g. if there are no derivatives involved), we will simply write $f, H, A_H, F^{ij}$ etc. 
\end{rmk}

The estimates \eqref{AB} and \eqref{AB'} are derived by testing \eqref{3a'} against suitable test functions. First fix a ball $B_{2R}\Subset \Omega_h$. Since $\lambda(A_H)\in\Gamma_2^+$ is equivalent to $\operatorname{tr}(A_H) = \Delta u - \operatorname{tr}(H)>0$ and $\sigma_2(A_H)>0$, there exists a constant $C_1\geq 0$ (depending on an upper bound for $ \|H\|_{C^0(\Sigma)}$ - see Remark \ref{64}) for which $\Delta u + C_1\geq 1$ and $|\nabla^2 u| \leq \Delta u + C_1$ a.e. in $B_{2R}$. We define $\tilde{v} \defeq v+C_1$, and for a small parameter $\delta>0$ (that we eventually take to zero) we denote
\begin{equation*}
Q_\delta \defeq \big((\tilde{v}^+)^2 + \delta^2 \big)^{1/2}. 
\end{equation*}
For $\rho\in(0,\frac{R}{3}]$ we also let $\eta\in C_{\mathrm{c}}^\infty(B_{R+2\rho})$ be a standard non-negative cutoff function. Testing \eqref{3a'} against $\eta Q_\delta^{q-1}$ (where $q>1$) then yields
\begin{align}\label{11''}
\sum_l\int_{B_{R+2\rho}}k\eta Q_\delta^{q-1}f^{k-1}\Delta_{ll}^h f[u] & \leq \int_{B_{R+2\rho}}\eta Q_\delta^{q-1}F^{ij}\nabla_i\nabla_j \tilde{v} - \sum_l\int_{B_{R+2\rho}}\eta Q_\delta^{q-1}F^{ij}\Delta_{ll}^h (H[u])_{ij}
\end{align}
for all $q>1$ and $u\in W_{\operatorname{loc}}^{2,q+k-1}(\Omega)\cap W_{\operatorname{loc}}^{1,\infty}(\Omega)$ solving \eqref{7}.

For ease of outlining our argument, let us suppose that $f=f(x,z)$ (the general case $f=f(x,z,\xi)$ will only require minor changes - see Section \ref{AT}). Then the integrand on the left hand side (LHS) of \eqref{11''} is a lower order term, whereas the integrands on the RHS of \eqref{11''} involve higher order terms, formally of fourth and third order in the limit $h\rightarrow 0$, and thus need to be treated. 

In Section \ref{81}, we integrate by parts in the first integral on the RHS of \eqref{11''}, using a result of Section \ref{80} that tells us $\nabla_iF[u]^{ij}$ is a regular distribution belonging to $L_{\operatorname{loc}}^{(q+k-1)/(k-1)}(\Omega)$ if $u\in W_{\operatorname{loc}}^{2,q+k-1}(\Omega)\cap W_{\operatorname{loc}}^{1,\infty}(\Omega)$. After taking $\delta\rightarrow 0$ and carrying out some further calculations (see Lemmas \ref{P} and \ref{21'}), we will obtain the estimate 
\begin{align}\label{57}
&\frac{q-1}{Cq^2}\int_{B_{R+\rho}}f^k\frac{\big|\nabla\big((\tilde{v}^+)^{q/2}\big)\big|^2}{\operatorname{tr}(A_H)}  + \int_{B_{R+2\rho}}\eta(\tilde{v}^+)^{q-1}\nabla_i F[u]^{ij}\nabla_j\tilde{v} \nonumber \\
& \qquad  + \sum_l \int_{B_{R+2\rho}}\eta(\tilde{v}^+)^{q-1} F^{ij}\Delta_{ll}^h (H[u])_{ij} \nonumber \\
& \qquad  \qquad \leq \frac{C}{\rho^2}\bigg(\int_{B_{R+2\rho}}(\tilde{v}^+)^{q+k-1} + \int_{B_{R+3\rho}}(\Delta u + C_1)^{q+k-1}\bigg),
\end{align}
where $C$ is a constant independent of $h, q$ and $\rho$. 

Whilst the first integral on the LHS of \eqref{57} is a favourable positive higher order term, the other two integrals on the LHS (which we denote by $\operatorname{(I_2)}_h$ and $\operatorname{(I_3)}_h$, respectively) involve higher order terms which are, a priori, of unknown sign. Treating $\operatorname{(I_2)}_h$ and $\operatorname{(I_3)}_h$ is the most technical part of our proof.

Now, if we momentarily assume sufficiently high regularity on $u$, say $u\in W_{\operatorname{loc}}^{2,q+2k-1}(\Omega)\cap W_{\operatorname{loc}}^{1,\infty}(\Omega)$ ($q>1$), the issue of dealing with $\operatorname{(I_2)}_h$ and $\operatorname{(I_3)}_h$ is largely simplified. As will be detailed in the proof of Theorem \ref{BB}, one may apply the Cauchy inequality to each of the integrands and absorb the resulting third order terms into the positive term on the LHS of \eqref{57}. Under the stated integrability assumption, this crude estimation is sufficient to show
\begin{equation*}
\frac{q-1}{q^2}\int_{B_{R+\rho}}f^k\frac{\big|\nabla\big((\tilde{v}^+)^{q/2}\big)\big|^2}{\operatorname{tr}(A_H)} \leq \frac{C}{\rho^2}\bigg(\int_{B_{R+2\rho}}(\tilde{v}^+)^{q+2k-1} + \int_{B_{R+3\rho}}(\Delta u + C_1)^{q+2k-1}\bigg).
\end{equation*}
An estimate analogous to \eqref{AB} and \eqref{AB'} can then be obtained, assuming $q>kn-2k+1$. 

The difficulty is to therefore deal with $\operatorname{(I_2)}_h$ and $\operatorname{(I_3)}_h$ under the \textit{weaker} integrability assumptions of Theorems \ref{8} and \ref{56}. At this point, we make the distinction between the various cases. In each case,  we estimate $\operatorname{(I_2)}_h$ and $\operatorname{(I_3)}_h$ so as to produce a cancellation phenomenon when combined, leaving only lower order terms; see Lemmas \ref{83} and \ref{83'} for the estimates on $\operatorname{(I_2)}_h$, Lemma \ref{71} for the estimates on $\operatorname{(I_3)}_h$, and Corollaries \ref{J}, \ref{M} and \ref{N} for the resulting cancellations. It will then follow from \eqref{57} that, in Case 1 of Theorem \ref{8} with the relaxed assumption $u\in W_{\operatorname{loc}}^{2,q+k-1}(\Omega)\cap W_{\operatorname{loc}}^{1,\infty}(\Omega)$ ($q>1$), we have the estimate
\begin{equation}\label{67}
\frac{q-1}{q^2}\int_{B_{R+\rho}}f^k\frac{\big|\nabla\big((\tilde{v}^+)^{q/2}\big)\big|^2}{\operatorname{tr}(A_H)} \leq \frac{C}{\rho^2}\bigg(\int_{B_{R+2\rho}}(\tilde{v}^+)^{q+k-1} + \int_{B_{R+3\rho}}(\Delta u+C_1)^{q+k-1}\bigg).
\end{equation}
Similarly, in the remaining cases with $u\in W_{\operatorname{loc}}^{2,q+k}(\Omega)\cap W_{\operatorname{loc}}^{1,\infty}(\Omega)$ ($q>1$), we will obtain 
\begin{equation}\label{67'}
\frac{q-1}{q^2}\int_{B_{R+\rho}}f^k\frac{\big|\nabla\big((\tilde{v}^+)^{q/2}\big)\big|^2}{\operatorname{tr}(A_H)} \leq \frac{C}{\rho^2}\bigg(\int_{B_{R+2\rho}}(\tilde{v}^+)^{q+k}+\int_{B_{R+3\rho}}(\Delta u+C_1)^{q+k}\bigg).
\end{equation}

To obtain \eqref{AB} from \eqref{67} (resp. \eqref{AB'} from \eqref{67'}), we proceed as follows  (the details can be found in Section \ref{82}). We first obtain an integral estimate for $\big|\nabla\big((\tilde{v}^+)^{q/2}\big)\big|^2$, to which we can apply the Sobolev inequality. We then justify taking the limit $h\rightarrow 0$ and impose the lower bound $q+k-1> \frac{kn}{2}$ (resp. $q+k>\frac{(k+1)n}{2}$), from which we obtain \eqref{AB} (resp. \eqref{AB'}).

\section{Divergence structure of the linearised operator $F[u]^{ij}$}\label{80}

In this section we derive a divergence formula for the linearised operator $F[u]^{ij}$ (defined in \eqref{b}), which we will use at various stages of our proof. 

We note that in the case that $A_H[u]=\nabla^2 u$ or $A_H[u] = A_u$, the divergence properties of $F[u]^{ij}$ are well-documented (for smooth $u$). In the former case, $F[u]^{ij}$ is divergence-free with respect to the flat metric (see \cite{Rei73}), and in the latter case, $u^{1-k}F[u]^{ij}$ is divergence-free with respect to the conformal metric $g_{ij}=u^{-2}\delta_{ij}$ (see \cite{Via00a}). For related discussions, see also \cite{GW03a, BV04, Han06, STW07, BG08}. 

For $A\in \operatorname{Sym}_n(\mathbb{R})$ and $1\leq k\leq n$, define the $k$'th Newton tensor of $A$ inductively by
\begin{equation}\label{28}
T_k(A) \defeq \sigma_k(A)I - T_{k-1}(A) A, \quad T_0(A)^{ij} \defeq \delta^{ij}. 
\end{equation}
It is well-known (see \cite{Rei73}) that
\begin{equation}\label{29}
\frac{\partial\sigma_k(A)}{\partial A_{ij}} = T_{k-1}(A)^{ij}
\end{equation}
and
\begin{equation}\label{1''}
\operatorname{tr}(T_k(A)) = (n-k)\sigma_k(A),
\end{equation}
and moreover $T_{k-1}(A)^{ij}$ is positive definite when $\lambda(A)\in\Gamma_k^+$ (see \cite{CNS3}). In particular, by \eqref{b} and \eqref{29}, $F[u]^{ij} = T_{k-1}(A_H[u])^{ij}$. 

\begin{lem}\label{12}
	Let $\Omega\subset\mathbb{R}^n$ be a domain and $u\in C^3(\Omega)$. Then for $H\in C^1(\Omega\times \mathbb{R}\times\mathbb{R}^n\,;\,\operatorname{Sym}_n(\mathbb{R}))$ and $2\leq k \leq n$,
	\begin{equation}\label{15}
	\nabla_i F[u]^{ij} =  \sum_{p=1}^{k-1} (-1)^{p+1}T_{k-p-1}(A_H)^{ab}\Big(\nabla_a (H[u])^c_b - \nabla^c (H[u])_{ab}\Big)(A_H^{p-1})^j_c \eqdef V[u]^j.
	\end{equation}
	Moreover, if $H(x,z,\xi) = H_2(x,z,\xi)I$, then
	\begin{equation}\label{15'}
	\nabla_i F[u]^{ij} = -(n-k+1)\nabla_i (H_2[u])\, T_{k-2}(A_H)^{ij}.
	\end{equation}
\end{lem}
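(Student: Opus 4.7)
The two identities in \eqref{15} and \eqref{15'} are pointwise differential relations, so since $u\in C^3(\Omega)$ it suffices to verify them directly. My strategy is to derive a single one-step divergence recursion for the Newton tensor and then iterate it by induction on $k$ --- once for \eqref{15}, and once (independently) for the simpler specialization \eqref{15'}.

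\textbf{Step 1 (one-step recursion).} From the Newton-tensor recursion \eqref{28}, $F[u]^{ij} = T_{k-1}(A_H)^{ij} = \sigma_{k-1}(A_H)\delta^{ij} - T_{k-2}(A_H)^{ia}(A_H)_{aj}$. Taking the divergence in $i$ and using \eqref{29} to evaluate $\nabla_j\sigma_{k-1}(A_H) = T_{k-2}(A_H)^{ab}\nabla_j(A_H)_{ab}$, I obtain
\begin{equation*}
\nabla_i T_{k-1}(A_H)^{ij} = T_{k-2}(A_H)^{ab}\bigl[\nabla_j(A_H)_{ab} - \nabla_a(A_H)_{bj}\bigr] - \nabla_i T_{k-2}(A_H)^{ia}\,(A_H)_{aj}.
\end{equation*}
The crucial input is that third derivatives of $u$ commute, so the bracket equals $\nabla_a H_{bj} - \nabla_j H_{ab}$; the $\nabla^3 u$ contribution drops out entirely, leaving only first derivatives of $H$ and a lower-level Newton-tensor divergence.

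\textbf{Step 2 (induction for \eqref{15}).} The case $k=2$ is immediate from $\nabla_i T_0^{ia} \equiv 0$: only the bracket survives and yields the unique $p=1$ term. For the inductive step, I substitute \eqref{15} at level $k-1$ into $\nabla_i T_{k-2}(A_H)^{ia}$, multiply by $(A_H)_{aj}$, and shift the summation index $p\mapsto p+1$; the $p\geq 2$ terms of the target sum are produced by this shift (picking up the extra factor of $A_H$ needed for $(A_H^{p-1})^j_c$), while the $p=1$ term is supplied by the leading bracket from Step 1.

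\textbf{Step 3 (induction for \eqref{15'}).} When $H = H_2 I$, the bracket from Step 1 simplifies via $\operatorname{tr}(T_{k-2}(A_H)) = (n-k+2)\sigma_{k-2}(A_H)$ (from \eqref{1''}) to $T_{k-2}(A_H)^{aj}\nabla_a H_2 - (n-k+2)\sigma_{k-2}(A_H)\nabla_j H_2$. Inducting on $k$ and inserting the hypothesis $\nabla_i T_{k-2}(A_H)^{ia} = -(n-k+2)\nabla_i H_2\cdot T_{k-3}(A_H)^{ia}$ into the recursion, I apply \eqref{28} once more in the form $T_{k-3}(A_H)^{ia}(A_H)_{aj} = \sigma_{k-2}(A_H)\delta^{ij} - T_{k-2}(A_H)^{ij}$; the two copies of $(n-k+2)\sigma_{k-2}(A_H)\nabla_j H_2$ cancel and the remainder collapses to $-(n-k+1)\nabla_i H_2\cdot T_{k-2}(A_H)^{ij}$. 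The main delicacy is the index bookkeeping in Step 2 --- verifying that the alternating sign $(-1)^{p+1}$ and the shift $p\mapsto p+1$ interact correctly with the inductive hypothesis --- while Step 3's numerical cancellation relies on invoking the Newton-tensor recursion \eqref{28} a second time to combine the two terms.
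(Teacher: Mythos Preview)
Your proposal is correct and follows essentially the same route as the paper: both derive the one-step divergence recursion for the Newton tensor (your Step~1 is exactly the paper's equation~\eqref{18}, shifted by one level), then iterate it to obtain~\eqref{15} and run a separate induction for the scalar case~\eqref{15'}, using~\eqref{28} and~\eqref{1''} to collapse the terms. The only cosmetic difference is that the paper states the recursion and induction at level $T_k$ and then specializes, whereas you work directly at level $T_{k-1}=F$.
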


\begin{proof}
	The identity \eqref{15} will follow once we show that for $1\leq k\leq n-1$,
	\begin{align}\label{-1}
	\nabla_i T_k(A_H[u])^{ij} = \sum_{p=1}^k (-1)^{p+1} T_{k-p}(A_H)^{ab}\Big(\nabla_a (H[u])^c_b - \nabla^c (H[u])_{ab}\Big)(A_H^{p-1})^j_c \quad\mathrm{in~}\Omega.
	\end{align}
	Similarly, \eqref{15'} will follow once we show that for $1\leq k\leq n-1$ and $H(x,z,\xi)=H_2(x,z,\xi)I$, 
	\begin{equation}\label{17}
	\nabla_i T_k(A_H[u])^{ij} = -(n-k)\nabla_i (H_2[u])\,T_{k-1}(A_H)^{ij}\quad\mathrm{in~}\Omega. 
	\end{equation}
	
	To this end, we take the divergence of both sides in \eqref{28}, which yields
	\begin{align}\label{18}
	\nabla_i T_k(A_H[u])^{ij} & = \nabla^j\sigma_k(A_H[u]) - \nabla_i \big(T_{k-1}(A_H[u])^{il}(A_H[u])^j_l\big) \nonumber \\
	& = \frac{\partial\sigma_k(A_H)}{\partial(A_H)_{il}}\nabla^j (A_H[u])_{il} - \nabla_i(T_{k-1}(A_H[u]))^{il}(A_H)^j_l - T_{k-1}(A_H)^{il}\nabla_i (A_H[u])^j_l\nonumber \\
	& \leftstackrel{\eqref{29}}{=} T_{k-1}(A_H)^{il}\big(\nabla^j(A_H[u])_{il}-\nabla_i(A_H[u])^j_l\big) - \nabla_i(T_{k-1}(A_H[u]))^{il}(A_H)^j_l \nonumber \\
	& = T_{k-1}(A_H)^{il}\big(\nabla_i (H[u])_l^j - \nabla^j (H[u])_{il}\big) - \nabla_i\big(T_{k-1}(A_H[u])\big)^{il}(A_H)^j_l. 
	\end{align}
	Then \eqref{-1} is readily seen by applying \eqref{18} iteratively. 
	
	We now turn to \eqref{17}, for which we apply an induction argument on $k$ using \eqref{18}. The base case $k=1$ is clear. We  suppose that for some $k\geq 2$ we have the identity
	\begin{equation}\label{94}
	\nabla_i T_{k-1}(A_H[u])^{ij} = -(n-k+1)\nabla_i (H_2[u])\,T_{k-2}(A_H)^{ij},
	\end{equation}
	and we show that \eqref{17} then follows. First observe that, by \eqref{94} and the fact $H_{ij} = H_2\delta_{ij}$, \eqref{18} simplifies to
	\begin{align}\label{11a}
	\nabla_i T_k(A_H[u])^{ij}  & =\nabla_i (H_2[u]) T_{k-1}(A_H)^{ij}  -\nabla^j (H_2[u]) \operatorname{tr}(T_{k-1}(A_H)) \nonumber \\
	& \quad + (n-k+1)\nabla_i (H_2[u]) (T_{k-2}(A_H) A_H)^{ij}. 
	\end{align}
	After substituting \eqref{28} and \eqref{1''} into the last term and the penultimate term in \eqref{11a}, respectively, we arrive at \eqref{17}.
\end{proof}

Note that $V[u]^j$ (defined in \eqref{15}) contains at most second order derivatives of $u$. As a consequence, $\nabla_i F[u]^{ij}$ is a regular distribution for $u\in W_{\operatorname{loc}}^{2,q+k-1}(\Omega)\cap W_{\operatorname{loc}}^{1,\infty}(\Omega)$. More precisely, we have:

\begin{lem}\label{22}
	Let $\Omega\subset\mathbb{R}^n$ be a domain and $u\in W_{\operatorname{loc}}^{2,q+k-1}(\Omega)\cap W_{\operatorname{loc}}^{1,\infty}(\Omega)$ with $q>1$ and $2\leq k\leq n$. Then for $H\in C_{\operatorname{loc}}^{0,1}(\Omega\times \mathbb{R}\times\mathbb{R}^n;\operatorname{Sym}_n(\mathbb{R}))$ and $\phi\in W_0^{1,s}(\Omega;\mathbb{R}^n)$, $s\defeq\frac{q+k-1}{q}$, we have
	\begin{equation}\label{25}
	\int_{\Omega} F[u]^{ij}\nabla_i\phi_j = -\int_{\Omega}V[u]^j\phi_j,
	\end{equation}
	where $V[u]^j$ is defined in \eqref{15}. In particular, $\nabla_i F[u]^{ij} = V[u]^j\in L_{\operatorname{loc}}^{(q+k-1)/(k-1)}(\Omega)$ and
	\begin{equation}\label{3h}
	\big|\nabla_i F[u]^{ij}\big| \leq C\big(1+ |\nabla^2 u |^{k-1} \big)\quad \mathrm{a.e.~in~}B_{2R},
		\end{equation}
		where $C$ is a constant depending on an upper bound for $\|H\|_{C^{0,1}(\Sigma)}$. 
\end{lem}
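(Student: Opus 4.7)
The plan is to first establish the pointwise bound \eqref{3h}, which, together with $u\in W^{2,q+k-1}_{\operatorname{loc}}(\Omega)$, immediately yields $F[u]^{ij},V[u]^j\in L^{s'}_{\operatorname{loc}}(\Omega)$ with $s'=(q+k-1)/(k-1)$ (the H\"older conjugate of $s=(q+k-1)/q$). This ensures via H\"older that both sides of \eqref{25} are absolutely convergent for $\phi\in W^{1,s}_0(\Omega;\mathbb{R}^n)\subset L^s$.

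For the pointwise bound, I would read off from Lemma \ref{12} that $F[u]^{ij}=T_{k-1}(A_H[u])^{ij}$ is a polynomial of degree $k-1$ in the entries of $A_H[u]=\nabla^2 u-H[u]$, while each summand of $V[u]^j$ is a polynomial of total degree $k-2$ in $A_H[u]$ multiplied by a single factor $\nabla_a(H[u])^c_b$ or $\nabla^c(H[u])_{ab}$. The assumptions $H\in C^{0,1}_{\operatorname{loc}}$ and $u\in W^{1,\infty}_{\operatorname{loc}}$ ensure $H[u]$ is uniformly bounded on $B_{2R}$, and the a.e.\ chain rule for Lipschitz--Sobolev compositions yields $|\nabla(H[u])|\leq C(1+|\nabla^2 u|)$ a.e., with $C$ depending only on $\|H\|_{C^{0,1}(\Sigma)}$ as in Remark \ref{64}. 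Substituting these estimates into the explicit formulae for $F[u]$ and $V[u]$ gives \eqref{3h} at once.

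For \eqref{25} itself, I would proceed by a two-stage approximation. Let $u_\epsilon$ be the standard mollification of $u$ in a neighborhood of $\operatorname{supp}\phi$, and let $H^{(\delta)}$ be mollifications of $H$ in the $(z,\xi)$-variables chosen so that $H^{(\delta)}\to H$ locally uniformly with $\|H^{(\delta)}\|_{C^{0,1}}$ uniformly bounded. For smooth $u_\epsilon$ and $H^{(\delta)}$, Lemma \ref{12} applies pointwise, and standard integration by parts against $\phi\in C^1_c(\Omega;\mathbb{R}^n)$ gives the corresponding analogue of \eqref{25}. One then passes to the limit $\epsilon\to 0$ using $u_\epsilon\to u$ in $W^{2,q+k-1}_{\operatorname{loc}}$, the polynomial-continuity of $T_m$, and dominated convergence; next to $\delta\to 0$; and finally extends from $\phi\in C^1_c$ to $\phi\in W^{1,s}_0$ by density, using the $L^{s'}$-integrability of $F[u]$ and $V[u]$ established above. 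The main obstacle will be the limit $\delta\to 0$ in the $V$-term: since $\nabla H$ is only in $L^\infty$ (not continuous) when $H\in C^{0,1}$, the composition $\nabla H^{(\delta)}(\cdot,u,\nabla u)$ need not converge pointwise to $\nabla H(\cdot,u,\nabla u)$, as the graph of $(u,\nabla u)$ may lie in the discontinuity set of $\nabla H$. One handles this by expanding $\nabla H^{(\delta)}[u]$ via the chain rule and exploiting the uniform $L^\infty$ bound on $\nabla H^{(\delta)}$, the locally uniform $C^0$ convergence $H^{(\delta)}\to H$, and the $\delta$-independence of the polynomial factors in $A_H[u]$, invoking either weak-$*$ or dominated convergence on the resulting integrals.
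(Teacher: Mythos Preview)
Your derivation of the pointwise bound \eqref{3h} and the density reduction to $\phi\in C^1_c$ are correct and essentially match the paper. For \eqref{25} itself, however, the paper takes a simpler route: it approximates only $u$, choosing $u_{(m)}\in C^3(\Omega)$ with $u_{(m)}\to u$ in $W^{2,q+k-1}_{\operatorname{loc}}$, and leaves $H$ untouched. The point is that for $u_{(m)}\in C^3$ and $H\in C^{0,1}_{\operatorname{loc}}$ the composite $H[u_{(m)}]$ is already locally Lipschitz, so the algebraic computation behind \eqref{15} goes through a.e.\ (via Rademacher) and yields $\nabla_i F[u_{(m)}]^{ij}=V[u_{(m)}]^j$ a.e.\ without any smoothing of $H$. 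One then integrates by parts and passes $m\to\infty$ using $F[u_{(m)}]\to F[u]$ and $V[u_{(m)}]\to V[u]$ in $L^{(q+k-1)/(k-1)}_{\operatorname{loc}}$. This sidesteps entirely the $\delta\to0$ difficulty you identify as the main obstacle.

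Your two-stage mollification is also valid --- the $\delta\to0$ limit can indeed be handled by the weak--strong pairing you outline --- but it manufactures a complication the paper's argument avoids. One small slip: the polynomial factors $T_{k-p-1}(A_{H^{(\delta)}}[u])$ and $(A_{H^{(\delta)}}[u])^{p-1}$ are \emph{not} $\delta$-independent as you write, since they depend on $\delta$ through $H^{(\delta)}[u]$; rather, they converge strongly as $\delta\to0$, which is what the weak--strong argument actually needs.
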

\begin{proof}
	It is clear that $u\in W_{\operatorname{loc}}^{2,q+k-1}(\Omega)\cap W_{\operatorname{loc}}^{1,\infty}(\Omega)$ implies $V[u]^j \in L_{\operatorname{loc}}^{(q+k-1)/(k-1)}(\Omega)$. Since $\frac{1}{s} + \frac{k-1}{q+k-1}=1$, it suffices to prove \eqref{25} for $\phi\in C_0^\infty(\Omega;\mathbb{R}^n)$. Let $u_{(m)}\in C^3(\Omega)$ be such that $u_{(m)}\rightarrow u$ in $W_{\operatorname{loc}}^{2,q+k-1}(\Omega)$. Then by \eqref{15}, we have for each $m\in\mathbb{N}$ the identity $\nabla_i F[u_{(m)}]^{ij} = V[u_{(m)}]^j$, and it follows that 
	\begin{equation}\label{24}
	\int_{\Omega} F[u_{(m)}]^{ij}\nabla_i\phi_j = -\int_{\Omega} V[u_{(m)}]^j\phi_j. 
	\end{equation}
	Now, since $u_{(m)}\rightarrow u$ in $W_{\operatorname{loc}}^{2,q+k-1}(\Omega)$, we have both $F^{ij}[u_{(m)}]\rightarrow F^{ij}[u]$ and $V[u_{(m)}]\rightarrow V[u]$ in $L_{\operatorname{loc}}^{(q+k-1)/(k-1)}(\Omega)$. In particular, we can take $m\rightarrow \infty$ in \eqref{24} to get \eqref{25}. The estimate \eqref{3h} follows from the definition of $V[u]^j$.
\end{proof}

\section{Main estimates}\label{81}

In this section we prove our main estimates, which will then be used in the proof of our main results in Section \ref{82}. Largely, our estimates will be concerned with terms involving the contraction of the linearised operator $F=(F^{ij})$ and its divergence with various other tensors, such as $\nabla^2 \tilde{v}$, $\nabla \tilde{v}$ and $(\Delta^h_{ll}H[u]_{ij})$.

\subsection{Initial integral estimates: isolating higher order terms}\label{O}

The following lemma provides the starting point for our integral estimates: 

\begin{lem}\label{CC}
	Suppose $f\in C^0(\Omega\times\mathbb{R}\times\mathbb{R}^n)$ is positive, $H\in C^0(\Omega\times\mathbb{R}\times\mathbb{R}^n\,;\,\operatorname{Sym}_n(\mathbb{R}))$ and $u$ is a solution to \eqref{7}. Then for fixed $h$, 
	\begin{align}\label{3a}
	\sum_l kf^{k-1}\Delta_{ll}^h f[u]& \leq  F^{ij}\nabla_i\nabla_j v - \sum_l F^{ij}\Delta_{ll}^h (H[u])_{ij} \quad \mathrm{a.e.~in~}\Omega_h.
	\end{align}
\end{lem}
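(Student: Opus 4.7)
The plan is to apply concavity of $\sigma_k^{1/k}$ on $\Gamma_k^+$ directly to the three points $x-he_l, x, x+he_l$ for each direction $l$, turning the discrete Laplacian of the equation into a one-sided linearised inequality. After rearranging the resulting inequality, the matrix $\Delta_{ll}^h A_H[u]$ naturally appears, and writing $A_H[u] = \nabla^2 u - H[u]$ separates this into the Hessian term (which yields $\nabla_i\nabla_j v$ after summing on $l$) and the $H[u]$ correction.

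\textbf{Steps.} I would work at a point $x \in \Omega_h$ where the equation $\sigma_k^{1/k}(A_H[u](\cdot)) = f[u](\cdot)$ holds at each of $x$ and $x \pm he_l$; since \eqref{7} holds a.e.\ and only countably many translates are involved, this is satisfied on a set of full measure. First, I would invoke the midpoint concavity inequality
\[
\tfrac12\bigl(\sigma_k^{1/k}(A_H[u](x+he_l)) + \sigma_k^{1/k}(A_H[u](x-he_l))\bigr) \leq \sigma_k^{1/k}(M_l(x)),
\]
where $M_l(x) \defeq \tfrac12\bigl(A_H[u](x+he_l) + A_H[u](x-he_l)\bigr)$, valid since all three eigenvalue tuples lie in $\Gamma_k^+$ (note $M_l(x)$ lies in the convex cone of matrices with eigenvalues in $\Gamma_k^+$). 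Writing $M_l(x) = A_H[u](x) + \tfrac{h^2}{2}\Delta_{ll}^h A_H[u](x)$ and using the tangent-plane inequality for the concave function $\sigma_k^{1/k}$ at the base point $A_H[u](x)$,
\[
\sigma_k^{1/k}(M_l(x)) \leq \sigma_k^{1/k}(A_H[u](x)) + \tfrac{1}{k}\,\sigma_k^{1/k}(A_H[u](x))^{1-k}\, F^{ij}(M_l - A_H[u])_{ij},
\]
and then multiplying through by $2/h^2$, one obtains
\[
kf^{k-1}\Delta_{ll}^h f[u] \leq F^{ij}\Delta_{ll}^h (A_H[u])_{ij} \quad \text{a.e.\ in } \Omega_h.
\]

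\textbf{Final manipulation.} Since $A_H[u] = \nabla^2 u - H[u]$ and weak derivatives commute with the translation-based difference operator, one has $\Delta_{ll}^h (\nabla^2 u) = \nabla^2 (\Delta_{ll}^h u)$ a.e., so
\[
F^{ij}\Delta_{ll}^h(A_H[u])_{ij} = F^{ij}\nabla_i\nabla_j(\Delta_{ll}^h u) - F^{ij}\Delta_{ll}^h(H[u])_{ij}.
\]
Summing on $l \in \{1,\dots,n\}$ and using $v = \sum_l \Delta_{ll}^h u$ yields \eqref{3a}.

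\textbf{Main obstacle.} The only subtle point is verifying that the concavity argument is valid at a.e.\ $x$ given the mere $W^{2,p}$ regularity of $u$: one needs $A_H[u](x)$ and $A_H[u](x\pm he_l)$ to simultaneously have eigenvalues in $\Gamma_k^+$ and to satisfy the equation. This follows because \eqref{7} fails only on a set $N$ of measure zero, and the set where at least one of the three translates of $x$ lies in $N$ has measure zero as well. Everything else is routine once the pointwise inequality is in hand.
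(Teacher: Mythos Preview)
Your proof is correct and follows essentially the same concavity-linearisation route as the paper. The only cosmetic difference is that the paper applies the supporting-hyperplane inequality for $\sigma_k^{1/k}$ directly at the base point $A_H[u](x)$ to each of the translates $A_H[u](x\pm he_l)$ and then adds, whereas you insert an intermediate midpoint-concavity step before invoking the tangent-plane inequality; the two routes yield the identical pointwise inequality after dividing by $h^2$, and your treatment of the a.e.\ issue and the decomposition $A_H[u]=\nabla^2u-H[u]$ matches the paper's.
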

\begin{proof}
	The proof follows \cite{Urb00}, with some adjustments. For $A\in \operatorname{Sym}_n(\mathbb{R})$, let  $G^{ij}(A) = \partial\sigma_k^{1/k}(A)/\partial A_{ij} = k^{-1}\sigma_k(A)^{(1-k)/k}F^{ij}(A)$, and denote $G^{ij} \defeq G^{ij}(A_H[u])$. Fix $l\in\{1,\dots,n\}$ and $h\in\mathbb{R}\backslash\{0\}$. Then there exists a set $S_{h,l}\subset\Omega_h$ with $\mathcal{L}(\Omega_h\backslash S_{h,l})=0$ (where $\mathcal{L}$ is the Lebesgue measure) such that $\lambda(A_H[u](x)), \lambda(A_H[u](x\pm he_l))\in\Gamma_k^+$ for all $x\in S_{h,l}$. By concavity of $\sigma_k^{1/k}$ in $\Gamma_k^+$, it follows that for $x\in S_{h,l}$ we have 
	\begin{equation}\label{27}
	\sigma_k^{1/k}\big(A_H[u](x\pm he_l)\big) - \sigma_k^{1/k}(A_H[u](x))  \leq  G^{ij}(x)\big(A_H[u](x\pm he_l)- A_H[u](x)\big)_{ij}.
	\end{equation}
	Adding the two equations in \eqref{27}, dividing through by $h^2$ and summing over $l$, we have
	\begin{equation}\label{3d}
	\sum_l \Delta_{ll}^h \sigma_k^{1/k}\big(A_H[u](x)\big) \leq \sum_l G^{ij}(x) \Delta^h_{ll}\big(A_H[u](x)\big)_{ij}\quad \text{for all }x\in S_{h} \defeq \bigcap_{l=1}^n S_{h,l}, 
	\end{equation}
	 with $S_h$ clearly satisfying $\mathcal{L}(\Omega_h\backslash S_{h})=0$. Substituting the definition of $G^{ij}$ into \eqref{3d} and recalling that $A_H[u] = \nabla^2 u - H[u]$, we obtain
	\begin{equation}\label{3c}
	\sum_l k\sigma_k^{\frac{k-1}{k}}(A_H)\Delta_{ll}^h \sigma_k^{1/k}(A_H[u]) \leq \sum_l  F^{ij} \Delta_{ll}^h\big(\nabla^2 u - H[u]\big)_{ij}\quad\mathrm{in~}S_h. 
	\end{equation}
	Substituting the equation $\sigma_k^{1/k}(A_H) = f$ into the LHS of \eqref{3c}, and commuting difference quotients with derivatives on the RHS of \eqref{3c}, we arrive at \eqref{3a}. 
\end{proof}

As outlined in Section \ref{79}, we proceed to derive a series of integral estimates by multiplying \eqref{3a} by suitable test functions and integrating by parts using the divergence structure proved in Lemma \ref{22}. Recall that for a fixed increment $h>0$, we defined $v(x) = \sum_l \Delta_{ll}^h u(x)$, and that we fixed a ball $B_{2R}\Subset\Omega_h$ and a constant $C_1$ (depending on an upper bound for $\|H\|_{C^0(\Sigma)}$) such that $\Delta u + C_1 \geq 1$ and $|\nabla^2 u| \leq \Delta u  +C_1$ a.e. in $B_{2R}$. The existence of such a constant is guaranteed by the assumption $\lambda(A_H)\in\Gamma_2^+$. We then defined $\tilde{v} = v + C_1$, and for a small parameter $\delta>0$ (that we eventually take to zero) we defined $Q_\delta = \big((\tilde{v}^+)^2 + \delta^2\big)^{1/2}$. For $\rho\in (0,\frac{R}{3}]$, we also fix a cutoff function $\eta\in C_{\mathrm{c}}^\infty(B_{R+2\rho})$ satisfying $0\leq \eta\leq 1$, $\eta\equiv 1$ on $B_{R+\rho}$ and $|\nabla^l\eta| \leq C(n)\rho^{-l}$ for $l=1,2$.

Suppose $u\in W_{\operatorname{loc}}^{2,q+k-1}(\Omega)\cap W_{\operatorname{loc}}^{1,\infty}(\Omega)$ ($q>1$) is a solution to \eqref{7}. Multiplying \eqref{3a} by $\eta Q_\delta^{q-1}$ and integrating over the domain $B_{R+2\rho}$, we see
\begin{align}\label{11'}
\sum_l\int_{B_{R+2\rho}}k\eta Q_\delta^{q-1}f^{k-1}\Delta_{ll}^h f[u] & \leq \int_{B_{R+2\rho}}\eta Q_\delta^{q-1}F^{ij}\nabla_i\nabla_j \tilde{v} - \sum_l\int_{B_{R+2\rho}}\eta Q_\delta^{q-1}F^{ij}\Delta_{ll}^h (H[u])_{ij},
\end{align}
which is just the estimate \eqref{11''} in Section \ref{79}, repeated here for convenience.

We are now in a position to prove our first integral estimate. In what follows, let
\begin{equation*}
J_h^{(s)} \defeq \int_{B_{R+2\rho}}(\tilde{v}^+)^{s} + \int_{B_{R+3\rho}}(\Delta u + C_1)^{s}.
\end{equation*}
Roughly speaking, if $u\in W_{\operatorname{loc}}^{2,s}(\Omega)$ then $J_h^{(s)}$ should be interpreted as a lower order term, and terms bounded by $J_h^{(s)}$ are consequently considered `good terms'.

We will first address the case $f=f(x,z)$ for simplicity and postpone the more general case until Section \ref{AT}. The relevant equation is therefore 
\begin{equation}\label{7''}
\sigma_k^{1/k}\big(A_H[u](x)\big) = f(x,u(x))>0, \quad \lambda(A_H[u](x)) \in\Gamma_k^+  \quad \mathrm{for~a.e.~}x\in\Omega.
\end{equation}
Throughout Section \ref{81}, unless otherwise stated, $C$ will denote a generic positive constant which may vary from line to line, depending only on $n,R,f,H$ and an upper bound for $\| u\|_{W^{1,\infty}(B_{2R})}$. In particular, $C$ is independent of $h$, $q$ and $\rho$, and any norm of $\nabla^2 u$. In addition, we will often use the inequalities $\Delta u + C_1 \geq 1$ and $|\nabla^2 u| \leq \Delta u  +C_1$ without explicit reference.

\begin{lem}\label{P}
	Suppose $f\in C_{\operatorname{loc}}^{1,1}(\Omega\times\mathbb{R})$ is positive, $H\in C_{\operatorname{loc}}^{0,1}(\Omega\times\mathbb{R}\times\mathbb{R}^n\,;\,\operatorname{Sym}_n(\mathbb{R}))$ and $u\in W_{\operatorname{loc}}^{2,q+k-1}(\Omega)\cap W_{\operatorname{loc}}^{1,\infty}(\Omega)$ ($q>1$) is a solution to \eqref{7''}. Then for $R>0$ with $B_{2R}\Subset\Omega$, $\rho\in (0,\frac{R}{3}]$ and $|h|$ sufficiently small, we have
	\begin{align}
	(q-1)\int_{B_{R+2\rho}}\eta(\tilde{v}^+)^{q-2}F^{ij}\nabla_i&\tilde{v}\nabla_j\tilde{v} + \int_{B_{R+2\rho}}\eta(\tilde{v}^+)^{q-1}\nabla_i F[u]^{ij}\nabla_j\tilde{v}  \nonumber \\
	& + \sum_l \int_{B_{R+2\rho}}\eta(\tilde{v}^+)^{q-1} F^{ij}\Delta_{ll}^h (H[u])_{ij} \leq C\rho^{-2} J_h^{(q+k-1)}.\label{AC}
	\end{align}
\end{lem}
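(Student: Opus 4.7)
The plan is to integrate by parts once in the principal term $\int \eta Q_\delta^{q-1} F^{ij}\nabla_i\nabla_j\tilde v$ appearing on the RHS of~\eqref{11'} via Lemma~\ref{22}, pass to the limit $\delta\to 0$, rearrange, and then bound what is left of the RHS by $C\rho^{-2}J_h^{(q+k-1)}$. A key preliminary observation is that, for fixed $h$, second-order difference quotients of a $W^{2,q+k-1}$ function are themselves in $W^{2,q+k-1}$ (one only differences, never differentiates), so $\tilde v\in W^{2,q+k-1}_{\operatorname{loc}}(\Omega_h)$ and the product $\phi_j = \eta Q_\delta^{q-1}\nabla_j\tilde v$ is a legitimate test function in Lemma~\ref{22} at the exponent $s=(q+k-1)/q$.

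Applying Lemma~\ref{22} with this $\phi_j$ and using the chain-rule identity $\nabla_i Q_\delta^{q-1} = (q-1)Q_\delta^{q-3}\tilde v^+\nabla_i\tilde v^+$ rewrites the principal term as
\begin{align*}
\int \eta Q_\delta^{q-1}F^{ij}\nabla_i\nabla_j\tilde v = &-\int \eta Q_\delta^{q-1}V[u]^j\nabla_j\tilde v - \int F^{ij}(\nabla_i\eta)Q_\delta^{q-1}\nabla_j\tilde v\\
&- (q-1)\int \eta Q_\delta^{q-3}\tilde v^+F^{ij}\nabla_i\tilde v^+\nabla_j\tilde v.
\end{align*}
Substituting into~\eqref{11'} and moving the (non-negative) third integral to the LHS, I pass to the limit $\delta\to 0$: Fatou's lemma (using positive semidefiniteness of $F$) produces the claimed term $(q-1)\int\eta(\tilde v^+)^{q-2}F^{ij}\nabla_i\tilde v\nabla_j\tilde v$, and dominated convergence handles the remaining terms, with dominating functions of the form $C(|\tilde v|+1)^{q-1}$ times $|V[u]|$ or $|\nabla\tilde v|$; these are integrable by Hölder's inequality since $V[u]\in L^{(q+k-1)/(k-1)}_{\operatorname{loc}}$ by Lemma~\ref{22} and $\tilde v,\nabla\tilde v\in L^{q+k-1}_{\operatorname{loc}}$.

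Two terms remain on the RHS. For the lower-order term $\sum_l k\int\eta(\tilde v^+)^{q-1}f^{k-1}\Delta^h_{ll}f[u]$, the $C^{1,1}$ hypothesis on $f=f(x,z)$ together with the $W^{1,\infty}$ bound on $u$ give $|\Delta^h_{ll}f[u]|\leq C(1+|\Delta^h_{ll}u|)$ in an integrated sense, after which Hölder and Young's inequalities finish the bound. For the boundary term $\int F^{ij}(\nabla_i\eta)(\tilde v^+)^{q-1}\nabla_j\tilde v$, a direct Cauchy–Schwarz absorption into the first LHS term would introduce a constant of order $1/(q-1)$, incompatible with the $q$-independent constant stipulated in the conventions. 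I therefore sidestep absorption by a \emph{second} integration by parts: using $(\tilde v^+)^{q-1}\nabla_j\tilde v = q^{-1}\nabla_j((\tilde v^+)^q)$ and applying Lemma~\ref{22} to $\psi_i = (\nabla_i\eta)(\tilde v^+)^q\in W_0^{1,(q+k-1)/q}$ gives
$$\int F^{ij}(\nabla_i\eta)(\tilde v^+)^{q-1}\nabla_j\tilde v = -\tfrac 1q\int V[u]^i(\nabla_i\eta)(\tilde v^+)^q - \tfrac 1q\int F^{ij}(\nabla_i\nabla_j\eta)(\tilde v^+)^q.$$
Each summand is then estimated via $|V[u]|,|F|\leq C(\Delta u+C_1)^{k-1}$, $|\nabla^l\eta|\leq C\rho^{-l}$, $\rho\leq R/3$, and Young's inequality with exponents $\tfrac{q+k-1}{k-1}$ and $\tfrac{q+k-1}{q}$, yielding the required $C\rho^{-2}J_h^{(q+k-1)}$ bound with $C$ independent of $q$. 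The main technical hurdle is precisely avoiding this spurious $1/(q-1)$ degeneration; the naturally appearing $1/q$ factor from the second integration by parts is what makes the final constant truly $q$-independent.
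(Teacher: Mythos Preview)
Your proof is correct and follows essentially the same route as the paper: the same test function $\phi_j=\eta Q_\delta^{q-1}\nabla_j\tilde v$ in Lemma~\ref{22}, the same Fatou/dominated convergence passage to $\delta\to 0$, the same second integration by parts with $\phi_j=\frac{1}{q}(\tilde v^+)^q\nabla_j\eta$ to handle the cutoff-derivative term without incurring a $q$-dependent constant, and the same H\"older/difference-quotient treatment of the $f$-term via \eqref{3g'}. Your explicit remark that the second integration by parts is needed precisely to avoid a spurious $1/(q-1)$ blow-up is a point the paper leaves implicit.
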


\begin{proof}[Proof of Lemma \ref{P}]
Appealing to Lemma \ref{22} with $\phi_j = \eta Q_\delta^{q-1}\nabla_j \tilde{v}$, and noting that
\begin{align*}
\nabla_i\phi_j = Q_\delta^{q-1}\nabla_i\eta\nabla_j \tilde{v} +(q-1)\tilde{v}^+Q_\delta^{q-3}\nabla_i \tilde{v}\nabla_j \tilde{v} + \eta Q_\delta^{q-1}\nabla_i\nabla_j \tilde{v},
\end{align*}
we have
\begin{align}\label{7'}
\int_{B_{R+2\rho}}F^{ij}&\Big( Q_\delta^{q-1}\nabla_i\eta\nabla_j \tilde{v} +(q-1)\tilde{v}^+Q_\delta^{q-3}\nabla_i \tilde{v}\nabla_j \tilde{v} + \eta Q_\delta^{q-1}\nabla_i\nabla_j \tilde{v}\Big) \nonumber \\
& = -\int_{B_{R+2\rho}}\eta Q_\delta^{q-1}\nabla_i F[u]^{ij}\nabla_j \tilde{v}.
\end{align}
Rearranging \eqref{7'} to get the desired integration by parts formula for $\int_{B_{R+2\rho}}\eta Q_\delta^{q-1}F^{ij}\nabla_i\nabla_j \tilde{v}$, and substituting this back into \eqref{11'}, we obtain

\begin{align}\label{12''}
&(q-1)\int_{B_{R+2\rho}}\eta \tilde{v}^+Q_\delta^{q-3}F^{ij}\nabla_i \tilde{v}\nabla_j \tilde{v} + \int_{B_{R+2\rho}}\eta Q_\delta^{q-1}\nabla_i F[u]^{ij}\nabla_j \tilde{v} \nonumber \\
& + \sum_l\int_{B_{R+2\rho}}\eta Q_\delta^{q-1}F^{ij}\Delta_{ll}^h (H[u])_{ij} \leq - \int_{B_{R+2\rho}} Q_\delta^{q-1}F^{ij}\nabla_i\eta\nabla_j \tilde{v}\nonumber \\
& \qquad \qquad \qquad \qquad \qquad \qquad \qquad \quad\,\,\,\,  -  \sum_l\int_{B_{R+2\rho}}k\eta Q_\delta^{q-1}f^{k-1}\Delta_{ll}^h f[u]. 
\end{align}
We now take $\delta\rightarrow 0$ in \eqref{12''}, using Fatou's lemma for the first integral (which is positive) and the dominated convergence theorem elsewhere (which is justified since $q>1$). This yields
\begin{align}\label{12'}
& (q-1)\int_{B_{R+2\rho}}\eta (\tilde{v}^+)^{q-2}F^{ij}\nabla_i \tilde{v}\nabla_j \tilde{v} + \int_{B_{R+2\rho}}\eta (\tilde{v}^+)^{q-1}\nabla_i F[u]^{ij}\nabla_j \tilde{v} \nonumber \\
& + \sum_l\int_{B_{R+2\rho}}\eta (\tilde{v}^+)^{q-1}F^{ij}\Delta_{ll}^h (H[u])_{ij} \leq - \int_{B_{R+2\rho}} (\tilde{v}^+)^{q-1}F^{ij}\nabla_i\eta\nabla_j \tilde{v} \nonumber \\
& \qquad \qquad \qquad \qquad \qquad \qquad \quad \qquad \quad\,\,\,\,\,  - \sum_l\int_{B_{R+2\rho}}k\eta (\tilde{v}^+)^{q-1}f^{k-1}\Delta_{ll}^h f[u].
\end{align}

To conclude the proof of Lemma \ref{P}, we must bound the RHS of \eqref{12'} from above by $C\rho^{-2}J_h^{(q+k-1)}$. We begin with the first integral on the RHS of \eqref{12'}. Appealing again to Lemma \ref{22}, now with $\phi_j = \frac{1}{q}(\tilde{v}^+)^q\nabla_j \eta$ and $\nabla_i\phi_j = (\tilde{v}^+)^{q-1}\nabla_i \tilde{v}\nabla_j \eta + \frac{1}{q}(\tilde{v}^+)^q\nabla_i\nabla_j \eta$, we have
\begin{equation*}
\int_{B_{R+2\rho}}F^{ij}\Big((\tilde{v}^+)^{q-1}\nabla_i \eta\nabla_j \tilde{v} + \frac{1}{q}(\tilde{v}^+)^q\nabla_i\nabla_j \eta\Big)  = -\frac{1}{q}\int_{B_{R+2\rho}}(\tilde{v}^+)^q\nabla_i F[u]^{ij}\nabla_j \eta. 
\end{equation*}
Therefore,
\begin{align}\label{3i}
\bigg|\int_{B_{R+2\rho}} (\tilde{v}^+)^{q-1}F^{ij}\nabla_i\eta\nabla_j \tilde{v}\bigg| & \leq   \bigg|\frac{1}{q}\int_{B_{R+2\rho}}(\tilde{v}^+)^qF^{ij}\nabla_i\nabla_j\eta\bigg| + \bigg|\frac{1}{q}\int_{B_{R+2\rho}}(\tilde{v}^+)^q\nabla_i F[u]^{ij}\nabla_j\eta\bigg|  \nonumber \\
& \leq  \frac{C}{\rho^2}\int_{B_{R+2\rho}}(\tilde{v}^+)^q |F| + \frac{C}{\rho} \int_{B_{R+2\rho}}(\tilde{v}^+)^q \big|\!\operatorname{div}F[u]\big|,
\end{align}
where $F=(F^{ij})$. Recalling $|F|\leq C(\Delta u + C_1)^{k-1}$ and applying H\"older's inequality to the penultimate integral in \eqref{3i},  we see that $\int_{B_{R+2\rho}}(\tilde{v}^+)^q |F| \leq  CJ_h^{(q+k-1)}$. The final integral in \eqref{3i} satisfies the same estimate, since $|\operatorname{div}F[u]|\leq C(\Delta u + C_1)^{k-1}$ by \eqref{3h}.

 It remains to estimate the second term on the RHS of \eqref{12'}. Keeping in mind that $f=f(x,z)\in C_{\operatorname{loc}}^{1,1}(\Omega\times\mathbb{R})$, we apply H\"older's inequality followed by \eqref{3g'} to obtain
 
\begin{align}\label{AF}
\bigg|\sum_l\int_{B_{R+2\rho}}k\eta (\tilde{v}^+)^{q-1}f^{k-1}\Delta_{ll}^h &f[u]\bigg|   \leq C \bigg(\int_{B_{R+2\rho}}(\tilde{v}^+)^q\bigg)^\frac{q-1}{q}\bigg(\int_{B_{R+2\rho}}\bigg|\sum_l\Delta_{ll}^h f[u]\bigg|^{q}\bigg)^{\frac{1}{q}} \nonumber \\
& \leftstackrel{\eqref{3g'}}{\leq}  C\bigg(\int_{B_{R+2\rho}}(\tilde{v}^+)^q\bigg)^\frac{q-1}{q}\bigg(\int_{B_{R+3\rho}}\big|\Delta f[u]\big|^{q}\bigg)^{\frac{1}{q}} \leq CJ_h^{(q)}.
\end{align}
This concludes the proof.
\end{proof}

To clear up notation, we denote the three integrals on the LHS of \eqref{AC} involving higher order terms by
\begin{align*}
\operatorname{(I_1)}_h & \defeq (q-1)\int_{B_{R+2\rho}}\eta (\tilde{v}^+)^{q-2}F^{ij}\nabla_i \tilde{v}\nabla_j \tilde{v}, \nonumber \\
\operatorname{(I_2)}_h & \defeq \int_{B_{R+2\rho}}\eta (\tilde{v}^+)^{q-1}\nabla_i F[u]^{ij}\nabla_j \tilde{v}\,\, \text{ and} \nonumber \\
\operatorname{(I_3)}_h & \defeq \sum_l\int_{B_{R+2\rho}}\eta (\tilde{v}^+)^{q-1}F^{ij}\Delta_{ll}^h (H[u])_{ij}. 
\end{align*}

The terms $\operatorname{(I_1)}_h$, $\operatorname{(I_2)}_h$ and $\operatorname{(I_3)}_h$ will be considered in turn. In Section \ref{A}, we prove an estimate for $\operatorname{(I_1)}_h$. In Section \ref{AD}, we estimate $\operatorname{(I_2)}_h$ in the case that $H$ is a multiple of the identity, and in Section \ref{C} we estimate $\operatorname{(I_2)}_h$ for general $H$ when $k=2$. The estimate for $\operatorname{(I_3)}_h$ in the general case is slightly involved, so for illustrative purposes we first address the simpler case when $H(x,z,\xi)=H_1(x,z)|\xi|^2I$ with $H_1\geq 0$, which includes the $\sigma_k$-Yamabe equation in the positive case. This is done in Section \ref{3n}. The estimate for $\operatorname{(I_3)}_h$ in the general case is proved in Section \ref{D}. In the process, we will prove the cancellation phenomenon between $\operatorname{(I_2)}_h$ and $\operatorname{(I_3)}_h$ alluded to earlier -- see Corollaries \ref{J}, \ref{M} and \ref{N}. 

\subsection{A pointwise lower bound for $F[u]^{ij}\nabla_i\tilde{v}\nabla_j\tilde{v}$}\label{A}

The term $F^{ij}\nabla_i\tilde{v}\nabla_j \tilde{v}$ in $\operatorname{(I_1)}_h$ can be bounded in the same way as in \cite{Urb00} (see equation (3.6) therein). We reproduce the argument here for the reader's convenience. 

\begin{lem}\label{21'}
	Suppose $f\in C^0(\Omega\times\mathbb{R})$ is positive, $H\in C^0(\Omega\times\mathbb{R}\times\mathbb{R}^n\,;\,\operatorname{Sym}_n(\mathbb{R}))$ and $u$ is a solution to \eqref{7''}. Then for $q>0$,	\begin{equation}\label{21''}
	(v^+)^{q-2}F^{ij}\nabla_i\tilde{v}\nabla_j \tilde{v} \geq \frac{4f^k}{q^2}\frac{\big|\nabla\big((\tilde{v}^+)^{q/2}\big)\big|^2}{\Delta u - \operatorname{tr}(H)}\quad \mathrm{a.e.~in~}\Omega_h. 
	\end{equation}
	In particular, for $R>0$ with $B_{2R}\Subset\Omega$, $\rho\in (0,\frac{R}{3}]$, $q>1$ and $|h|$ sufficiently small, we have
	\begin{equation}\label{21}
	\operatorname{(I_1)}_h \geq \frac{q-1}{Cq^2}\int_{B_{R+\rho}}f^k\frac{\big|\nabla\big((\tilde{v}^+)^{q/2}\big)\big|^2}{\Delta u - \operatorname{tr}(H)}. 
	\end{equation}
\end{lem}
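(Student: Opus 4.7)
The statement consists of the pointwise bound \eqref{21''}, from which \eqref{21} is almost immediate. The plan is therefore to reduce \eqref{21''} to a purely algebraic inequality on the Garding cone $\Gamma_k^+$ and then integrate.

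At almost every point $x \in \Omega_h$ at which $\lambda(A_H[u](x)) \in \Gamma_k^+$, I would diagonalise $A_H[u](x)$ in an orthonormal basis and let $\lambda = (\lambda_1,\dots,\lambda_n)$ denote its eigenvalues. By \eqref{29}, $F[u]^{ij} = T_{k-1}(A_H)^{ij}$ is simultaneously diagonal with entries $F[u]^{ii} = \sigma_{k-1}(\lambda|i) > 0$. The key algebraic input is the Newton-Maclaurin-type inequality
\[
\sigma_1(\lambda)\, \sigma_{k-1}(\lambda|i) \geq \sigma_k(\lambda), \qquad \lambda \in \Gamma_k^+, \quad i = 1,\dots,n,
\]
a standard consequence of Garding-Maclaurin concavity on $\Gamma_k^+$. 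Granting this, $F[u]^{ii} \geq \sigma_k(\lambda)/\sigma_1(\lambda) = f^k/(\Delta u - \operatorname{tr}(H))$ for each $i$, so writing $\xi = \nabla\tilde{v}(x)$ in this basis,
\[
F[u]^{ij}\nabla_i\tilde{v}\nabla_j\tilde{v} = \sum_i \sigma_{k-1}(\lambda|i)\,\xi_i^2 \geq \frac{f^k}{\Delta u - \operatorname{tr}(H)}\,|\nabla\tilde{v}|^2 \qquad \text{a.e.~in~}\Omega_h.
\]

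From here, \eqref{21''} follows upon multiplying by $(\tilde{v}^+)^{q-2}$ and using the chain-rule identity $|\nabla((\tilde{v}^+)^{q/2})|^2 = \frac{q^2}{4}(\tilde{v}^+)^{q-2}|\nabla\tilde{v}|^2$ a.e.\ on $\{\tilde{v}>0\}$; on $\{\tilde{v} \leq 0\}$ both sides of \eqref{21''} vanish, using that $\nabla\tilde{v} = 0$ a.e.\ on $\{\tilde{v} = 0\}$ (Stampacchia lemma). The integral estimate \eqref{21} is then obtained by multiplying \eqref{21''} by $(q-1)\eta \geq 0$, integrating over $B_{R+2\rho}$, and using $\eta \equiv 1$ on $B_{R+\rho}$ to discard the nonnegative contribution on $B_{R+2\rho} \setminus B_{R+\rho}$; this yields \eqref{21} with $C = 1/4$ (absorbed into the generic constant).

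The argument is essentially the pointwise one in \cite[eq.~(3.6)]{Urb00}. The only mild subtlety is the algebraic inequality on $\Gamma_k^+$: using $\sigma_k(\lambda) = \lambda_i \sigma_{k-1}(\lambda|i) + \sigma_k(\lambda|i)$ and $\sigma_1(\lambda) = \lambda_i + \sigma_1(\lambda|i)$, it is equivalent to $\sigma_1(\lambda|i)\sigma_{k-1}(\lambda|i) - \sigma_k(\lambda|i) \geq 0$, which in turn expands to $(k-1)\sigma_k(\lambda|i) + \sum_{j \neq i}\lambda_j^2 \sigma_{k-2}(\lambda|i,j) \geq 0$. Since $\sigma_k(\lambda|i)$ and $\sigma_{k-2}(\lambda|i,j)$ can individually be negative for $\lambda\in\Gamma_k^+$, this cannot be proved term by term and genuinely requires the Garding-Maclaurin machinery.
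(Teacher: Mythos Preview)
Your proof is correct and follows essentially the same route as the paper: both reduce \eqref{21''} to the matrix inequality $F[u]^{ij} \geq \frac{f^k}{\Delta u - \operatorname{tr}(H)}\,\delta^{ij}$ for $\lambda(A_H)\in\Gamma_k^+$, and then obtain \eqref{21} by integrating against $(q-1)\eta$. The only cosmetic difference is that the paper derives this matrix inequality by iterating the concavity-based bound $F_{(k)}^{ij}/\sigma_k \geq F_{(k-1)}^{ij}/\sigma_{k-1}$ (i.e.\ \eqref{32}) down to $\delta^{ij}/\sigma_1$, whereas you diagonalise and state the equivalent eigenvalue form $\sigma_1(\lambda)\sigma_{k-1}(\lambda|i)\geq\sigma_k(\lambda)$ directly---these are two phrasings of the same G{\aa}rding--Maclaurin fact.
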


\begin{proof}
	Denote by $\mathcal{M}_{k}^+\subset\operatorname{Sym}_n(\mathbb{R})$ the set of symmetric matrices $M$ with $\lambda(M)\in\Gamma_{k}^+$. For $1\leq l\leq n$, denote by $F_{(l)}^{ij}(A)$ the matrix with entries $\partial \sigma_l(A)/\partial A_{ij}$. Using the concavity of $\sigma_k(A)/\sigma_{k-1}(A)$ on $\mathcal{M}_k^+$, we have
	\begin{equation}\label{32}
	\frac{F_{(k)}^{ij}(A)}{\sigma_k(A)} \geq \frac{F_{(k-1)}^{ij}(A)}{\sigma_{k-1}(A)}\quad \text{for all }A\in\mathcal{M}_k^+
	\end{equation}
	(see e.g. \cite{Urb00, LT94}). Applying \eqref{32} inductively, it follows that
	\begin{equation}\label{20}
	\frac{F_{(k)}^{ij}(A)}{\sigma_k(A)} \geq \cdots \geq \frac{F_{(1)}^{ij}(A)}{\sigma_1(A)} = \frac{\delta^{ij}}{\operatorname{tr}(A)}\quad \text{for all }A\in\mathcal{M}_k^+.
	\end{equation}
	Taking $A=A_H[u]$ in \eqref{20}, where $u$ is a solution to \eqref{7''}, we obtain
	\begin{equation*}
	\frac{F[u]^{ij}(x)}{f^k[u](x)} \geq \frac{\delta^{ij}}{\Delta u(x) - \operatorname{tr}(H[u](x))} \quad \text{for a.e. }x\in\Omega,
	\end{equation*}
	from which \eqref{21''} is readily seen. The estimate \eqref{21} then follows from properties of $\eta$. 
\end{proof}

\subsection{Integral estimates for $\nabla_i F[u]^{ij}\nabla_j\tilde{v}$}\label{B}

In this section we obtain estimates for the term $\operatorname{(I_2)}_h = \int_{B_{R+2\rho}}\eta(\tilde{v}^+)^{q-1}\nabla_i F[u]^{ij}\nabla_j \tilde{v}$. The case in which $H$ is a multiple of the identity matrix will be dealt with first, in Section \ref{AD}. The case for general $H$ when $k=2$ will then be addressed in Section \ref{C}.

\subsubsection{The case $H=H_2(x,z,\xi)I$}\label{AD}

In what follows we denote $\operatorname{tr}(F) = \sum_i F^{ii}$. We prove the following two lemmas which address the case that $H$ is a multiple of the identity:

\begin{lem}\label{83}
	Suppose $f\in C^0(\Omega\times\mathbb{R})$ is positive, $H\in C_{\operatorname{loc}}^{1,1}(\Omega\times\mathbb{R}\times\mathbb{R}^n\,;\,\operatorname{Sym}_n(\mathbb{R}))$ with $H(x,z,\xi)=H_1(x,z)|\xi|^2I$, and that $u\in W_{\operatorname{loc}}^{2,q+k-1}(\Omega)\cap W_{\operatorname{loc}}^{1,\infty}(\Omega)$ ($q>1$) is a solution to \eqref{7''}. Then for $R>0$ with $B_{2R}\Subset\Omega$, $\rho\in (0,\frac{R}{3}]$ and $|h|$ sufficiently small, we have
	\begin{align}\label{45}
	\operatorname{(I_2)}_h & \geq  -\int_{B_{R+2\rho}}\eta(\tilde{v}^+)^{q-1}\operatorname{tr}(F) \frac{\partial (H_1|\xi|^2)}{\partial \xi_a}[u]\nabla_a \tilde{v} - C\rho^{-1}J_h^{(q+k-1)}.
	\end{align}
\end{lem}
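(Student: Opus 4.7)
The plan is to exploit the divergence identity \eqref{15'} of Lemma \ref{12}, which for $H = H_2 I$ with $H_2 = H_1(x,z)|\xi|^2$ gives $\nabla_i F[u]^{ij} = -(n-k+1)T_{k-2}(A_H)^{ij}\nabla_i H_2[u]$. Substituting into $\operatorname{(I_2)}_h$ and expanding $\nabla_i H_2[u]$ by the chain rule splits the integrand into a lower-order part $G_i \defeq (\partial_{x_i}H_2)[u] + (\partial_z H_2)[u]\nabla_i u$, which is bounded because $u$ and $\nabla u$ are bounded, together with a genuinely higher-order part $(\partial_{\xi_a}H_2)[u]\nabla_i\nabla_a u = 2H_1[u]\nabla_a u\,\nabla_i\nabla_a u$.

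The central algebraic manoeuvre is to rewrite $\nabla_i\nabla_a u = (A_H)_{ia} + H_2\delta_{ia}$ and apply the Newton recursion $T_{k-2}(A_H)\,A_H = \sigma_{k-1}(A_H)I - F$ coming from \eqref{28}. This yields
\[
T_{k-2}(A_H)^{ij}\nabla_i\nabla_a u = \sigma_{k-1}(A_H)\delta^j_a - F[u]^j_a + H_2\,T_{k-2}(A_H)^{aj}.
\]
The $\sigma_{k-1}(A_H)\delta^j_a$ contribution, combined with the identity $\operatorname{tr}(F) = (n-k+1)\sigma_{k-1}(A_H)$ (a consequence of \eqref{1''} and \eqref{28}), produces exactly the announced main term $-\int_{B_{R+2\rho}}\eta(\tilde v^+)^{q-1}\operatorname{tr}(F)(\partial_{\xi_a}H_2)[u]\nabla_a\tilde v$ of \eqref{45}.

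All remaining contributions -- namely the $G_i$-piece, the $F[u]^j_a$-piece, and the $H_2\,T_{k-2}(A_H)^{aj}$-piece -- must then be estimated by $C\rho^{-1}J_h^{(q+k-1)}$. The idea is to integrate by parts via $(\tilde v^+)^{q-1}\nabla_j\tilde v = q^{-1}\nabla_j(\tilde v^+)^q$, noting that boundary contributions vanish because $\eta$ is compactly supported in $B_{R+2\rho}$. The resulting bulk integrals split into three types: (i) those acquiring a $\nabla_j\eta$ factor, which supplies the $1/\rho$; (ii) those involving a divergence $\nabla_j F$ or $\nabla_j T_{k-2}$, controlled by Lemma \ref{22} and its analogue $|\nabla_j T_{k-2}(A_H)^{ij}|\leq C(1+|\nabla^2 u|^{k-2})$; and (iii) those involving chain-rule derivatives of $H_1[u]$, $H_2[u]$ or $\nabla u$, in which any stray $\nabla^2 u$ factor is replaced once again by $A_H + H_2 I$ to produce either $F^{ja}(A_H)_{ja} = k\sigma_k(A_H) = kf^k$ (bounded) or $H_2\operatorname{tr}(F) \leq C(\Delta u+C_1)^{k-1}$. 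A Young-type inequality together with $\Delta u + C_1 \geq 1$ then absorbs each bound into $C\rho^{-1}J_h^{(q+k-1)}$. The hypothesis that $H_1$ is independent of $\xi$ is essential here, since it ensures $\nabla_j H_1[u]$ is bounded.

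The main obstacle lies in this last bookkeeping: to land at the exponent $q + k - 1$ rather than $q + k$, one must apply the substitution $\nabla^2 u = A_H + H_2 I$ systematically whenever a second derivative reappears after integration by parts. A naive estimate that treats such factors by $|\nabla^2 u| \leq \Delta u + C_1$ would lose one order of $\Delta u + C_1$ and force the hypothesis on $p$ to be strengthened, which would ultimately be incompatible with the lower bound $p > kn/2$ in Case 1 of Theorem \ref{8}.
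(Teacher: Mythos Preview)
Your proposal is correct and follows essentially the same route as the paper: start from the divergence identity \eqref{15'}, expand $\nabla_i(H_2[u])$ by the chain rule, substitute $\nabla_i\nabla_a u = (A_H)_{ia} + H_2\delta_{ia}$, and use the Newton recursion \eqref{28}--\eqref{1''} to extract the main term $-\int\eta(\tilde v^+)^{q-1}\operatorname{tr}(F)\,\partial_{\xi_a}H_2[u]\,\nabla_a\tilde v$; then integrate the remaining pieces by parts via Lemma~\ref{22} (and its analogue for $T_{k-2}$), with the crucial observation that the residual contraction $F^{i}_a\,\partial^2_{\xi_a\xi_b}H_2\,(A_H)_{ib} = 2H_1\,F^{ia}(A_H)_{ia} = 2kH_1 f^k$ is bounded because $u$ solves \eqref{7''}. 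The only organisational difference is that the paper first proves a common intermediate inequality \eqref{3j} valid for any $H_2(x,z,\xi)$ (Step~1, without assuming $u$ solves the equation), and only afterwards specialises to $H_2 = H_1(x,z)|\xi|^2$ and invokes the equation (Step~3); you merge these into a single pass, which is perfectly fine for the purposes of Lemma~\ref{83} alone.
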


\begin{lem}\label{83'}
	Suppose $H\in C_{\operatorname{loc}}^{1,1}(\Omega\times\mathbb{R}\times\mathbb{R}^n\,;\,\operatorname{Sym}_n(\mathbb{R}))$ with $H(x,z,\xi) = H_2(x,z,\xi)I$, and that $u\in W_{\operatorname{loc}}^{2,q+k}(\Omega)\cap W_{\operatorname{loc}}^{1,\infty}(\Omega)$ ($q>1$). Then for $R>0$ with $B_{2R}\Subset\Omega$, $\rho\in (0,\frac{R}{3}]$ and $|h|$ sufficiently small, we have
	\begin{align}\label{45'}
	\operatorname{(I_2)}_h & \geq -\int_{B_{R+2\rho}}\eta(\tilde{v}^+)^{q-1}\operatorname{tr}(F) \frac{\partial H_2}{\partial \xi_a}[u]\nabla_a \tilde{v} - C\rho^{-1}J_h^{(q+k)}.
	\end{align}
\end{lem}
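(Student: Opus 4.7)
The plan is to extract the explicit form of $\nabla_i F[u]^{ij}$ from the divergence identity \eqref{15'} of Lemma \ref{12}, which in the scalar-identity case $H = H_2 I$ reads $\nabla_i F[u]^{ij} = -(n-k+1)\nabla_i(H_2[u])\,T_{k-2}(A_H)^{ij}$. Expanding
\[
\nabla_i(H_2[u]) = \frac{\partial H_2}{\partial x_i} + \frac{\partial H_2}{\partial z}\nabla_i u + \frac{\partial H_2}{\partial \xi_a}\nabla_i\nabla_a u,
\]
I would split $\operatorname{(I_2)}_h$ into a subcritical piece (the $\partial_{x_i}H_2$ and $(\partial_z H_2)\nabla_i u$ contributions) and a critical piece carrying the factor $\frac{\partial H_2}{\partial \xi_a}\,T_{k-2}(A_H)^{ij}\nabla_i\nabla_a u\,\nabla_j\tilde{v}$. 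For the subcritical piece I would integrate by parts via the identity $(\tilde{v}^+)^{q-1}\nabla_j\tilde{v} = q^{-1}\nabla_j(\tilde{v}^+)^q$; after the derivative lands on $\eta$, on $T_{k-2}(A_H)^{ij}$ (controlled by \eqref{17} applied with index $k-2$, using symmetry of the Newton tensor), or on the $H_2$-factors, the resulting integrand is pointwise bounded by $C(\Delta u + C_1)^{k-1}(\tilde{v}^+)^q$, giving an $L^1$ bound of $C\rho^{-1}J_h^{(q+k-1)}$ via H\"older's inequality.

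For the critical piece I would use the pointwise identity $\nabla_i\nabla_a u = (A_H)_{ia} + H_2\delta_{ia}$ together with the Newton-tensor recursion \eqref{28}, which gives $T_{k-2}(A_H)\,A_H = \sigma_{k-1}(A_H)I - T_{k-1}(A_H)$, to rewrite
\[
T_{k-2}(A_H)^{ij}\nabla_i\nabla_a u = \sigma_{k-1}(A_H)\delta_{ja} - F^{aj} + H_2\,T_{k-2}(A_H)^{aj}.
\]
Substituting this back into the critical piece, and using $\operatorname{tr}(F) = (n-k+1)\sigma_{k-1}(A_H)$ from \eqref{1''}, the first summand produces exactly the target integrand $-\eta(\tilde{v}^+)^{q-1}\operatorname{tr}(F)\frac{\partial H_2}{\partial \xi_a}\nabla_a\tilde{v}$. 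The other two summands give remainder integrals whose integrands are, respectively, $(n-k+1)\eta(\tilde{v}^+)^{q-1}\frac{\partial H_2}{\partial \xi_a}F^{aj}\nabla_j\tilde{v}$ and $-(n-k+1)\eta(\tilde{v}^+)^{q-1}H_2\frac{\partial H_2}{\partial \xi_a}T_{k-2}(A_H)^{aj}\nabla_j\tilde{v}$; both are still formally third order in $u$.

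To control these two remainders I would integrate by parts a second time, again using $(\tilde{v}^+)^{q-1}\nabla_j\tilde{v} = q^{-1}\nabla_j(\tilde{v}^+)^q$. After this step the derivative lands on $\eta$ (producing a $\rho^{-1}$ factor), on $F^{aj}$ or on $T_{k-2}(A_H)^{aj}$ (controlled by \eqref{3h} and \eqref{17} respectively, the latter applied with the second slot contracted via symmetry of the Newton tensor), on $H_2$, or on $\frac{\partial H_2}{\partial \xi_a}$. The worst integrand arises when the derivative hits $\frac{\partial H_2}{\partial \xi_a}$, producing a factor $\frac{\partial^2 H_2}{\partial \xi_a\partial \xi_b}\nabla_j\nabla_b u$ that contributes one extra power of $\Delta u + C_1$; combined with $|F|\leq C(\Delta u + C_1)^{k-1}$ from Lemma \ref{22}, the integrand is bounded by $C(\Delta u + C_1)^k(\tilde{v}^+)^q$, and H\"older's inequality with conjugate exponents $(q+k)/k$ and $(q+k)/q$ bounds the corresponding integral by $C J_h^{(q+k)}\leq C\rho^{-1}J_h^{(q+k)}$.

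The main obstacle I expect is the algebraic book\-keeping: one must correctly identify which of the three summands arising from the Newton-tensor identity yields the target integrand, and carefully apply the divergence formula \eqref{17} to $T_{k-2}$ with a contracted second slot (exploiting symmetry of the Newton tensor). The worst-case integrand $(\Delta u + C_1)^k(\tilde{v}^+)^q$, arising when a second $\xi$-derivative of $H_2$ hits $\nabla^2 u$, is precisely what forces the assumption $u \in W^{2,q+k}_{\operatorname{loc}}(\Omega)$ in Lemma \ref{83'}, in contrast to the weaker $W^{2,q+k-1}_{\operatorname{loc}}(\Omega)$ hypothesis of Lemma \ref{83}, where the tangential structure $H_1(x,z)|\xi|^2$ avoids creating such a term.
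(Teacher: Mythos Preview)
Your proposal is correct and follows essentially the same route as the paper's proof: the paper also starts from \eqref{15'}, splits according to the chain rule into the $\partial_x H_2$, $\partial_z H_2$ pieces (handled by integration by parts and bounded by $C\rho^{-1}J_h^{(q+k-1)}$) and the $\partial_\xi H_2$ piece, then uses the identity \eqref{30''} (equivalent to your Newton-tensor recursion) to extract the target $\operatorname{tr}(F)$-integrand plus the $F^{aj}$ and $H_2 T_{k-2}^{aj}$ remainders, and finally integrates by parts in the $F^{aj}$ remainder to produce the $\frac{\partial^2 H_2}{\partial\xi_a\partial\xi_b}(A_H)_{ib}$ term that costs the extra power and forces $u\in W^{2,q+k}_{\operatorname{loc}}$. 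One minor sharpening: in the paper the $H_2 T_{k-2}^{aj}$ remainder is absorbed into the $J_h^{(q+k-1)}$ error (since $|T_{k-2}|\leq C(\Delta u + C_1)^{k-2}$ gives one fewer power than $|F|$), so only the $F^{aj}$ remainder genuinely requires $J_h^{(q+k)}$.
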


\begin{rmk}
	Note that in Lemma \ref{83'}, we do not assume that $u$ solves \eqref{7''}. In contrast, the fact that Lemma \ref{83} holds under a weaker integrability assumption uses both the fact that $u$ solves \eqref{7''} \textit{and} that $H_2$ depends quadratically on $\nabla u$.
\end{rmk}

\begin{rmk}\label{14}
	The first term on the RHS of \eqref{45} and \eqref{45'} will later be shown to cancel with a term arising from our estimate for $\operatorname{(I_3)}_h$. 
\end{rmk}

\begin{proof}[Proof of Lemmas \ref{83} and \ref{83'}]
The proof consists of three steps. In Step 1, we prove a preliminary estimate assuming only $u\in W_{\operatorname{loc}}^{2,q+k-1}(\Omega)\cap W_{\operatorname{loc}}^{1,\infty}(\Omega)$ and $H=H_2(x,z,\xi)I$, but we do not assume at this point that $u$ necessarily solves \eqref{7''}. Only in Steps 2 and 3 will we appeal to the specific hypotheses of Lemmas \ref{83} and \ref{83'}.

Our starting point is the following expression for $\operatorname{(I_2)}_h$, which follows from \eqref{15'}: 

\begin{equation*}
\operatorname{(I_2)}_h = -(n-k+1)\int_{B_{R+2\rho}}\eta(\tilde{v}^+)^{q-1}\nabla_j (H_2[u])\, T_{k-2}(A_H)^{ij}\nabla_i \tilde{v}. 
\end{equation*}

\noindent \textit{\textbf{Step 1:}} In this step, we show that for every $u\in W_{\operatorname{loc}}^{2,q+k-1}(\Omega)\cap W_{\operatorname{loc}}^{1,\infty}(\Omega)$,
\begin{align}\label{3j}
\operatorname{(I_2)}_h & \geq -\int_{B_{R+2\rho}}\eta(\tilde{v}^+)^{q-1}\operatorname{tr}(F) \frac{\partial H_2}{\partial \xi_a}[u]\nabla_a\tilde{v} -\frac{n-k+1}{q}\int_{B_{R+2\rho}}\eta(\tilde{v}^+)^qF^i_a\frac{\partial^2 H_2}{\partial \xi_a\partial \xi_b}[u](A_H)_{ib}  \nonumber \\
& \quad  - C\rho^{-1}J_h^{(q+k-1)}.
\end{align}
Note that the first integral on the RHS of \eqref{3j} is the desired term seen in \eqref{45} and \eqref{45'}.  

First observe that by the chain rule,
\begin{align}\label{30}
\operatorname{(I_2)}_h & = -(n-k+1)\int_{B_{R+2\rho}}\eta(\tilde{v}^+)^{q-1}\frac{\partial H_2}{\partial x^j}[u]\,T_{k-2}(A_H)^{ij}\nabla_i \tilde{v} \nonumber \\
& \quad - (n-k+1)\int_{B_{R+2\rho}}\eta(\tilde{v}^+)^{q-1}\frac{\partial H_2}{\partial z}[u]\,T_{k-2}(A_H)^{ij}\nabla_j u\nabla_i \tilde{v} \nonumber \\
& \quad - (n-k+1)\int_{B_{R+2\rho}}\eta(\tilde{v}^+)^{q-1}\frac{\partial H_2}{\partial \xi_a}[u]\,T_{k-2}(A_H)^{ij}\nabla_j\nabla_a u\, \nabla_i \tilde{v}. 
\end{align}
Denote the top two lines of the RHS of \eqref{30} collectively by $L_1$, and the bottom line  by $L_2$. Recalling that $\nabla_j\nabla_a u  =  H_2\delta_{ja} + (A_H)_{ja}$ and, in view of \eqref{28} and \eqref{1''}, that
 \begin{equation}\label{30''}
 \big(T_{k-2}(A_H) A_H\big)_{ia}  =  - F_{ia} + \frac{1}{n-k+1}\operatorname{tr}(F)\delta_{ia},
 \end{equation}
 we have
\begin{align*}
L_2 & = - (n-k+1)\int_{B_{R+2\rho}}\eta(\tilde{v}^+)^{q-1}\frac{\partial H_2}{\partial \xi_a}[u]\,T_{k-2}(A_H)_{ia}H_2\nabla^i \tilde{v}\nonumber \\
& \qquad \, -(n-k+1)\int_{B_{R+2\rho}}\eta(\tilde{v}^+)^{q-1}\frac{\partial H_2}{\partial \xi_a}[u]\,\big(T_{k-2}(A_H) A_H\big)_{ia}\nabla^i \tilde{v}   \nonumber \\
& \leftstackrel{\eqref{30''}}{=}  - (n-k+1)\int_{B_{R+2\rho}}\eta(\tilde{v}^+)^{q-1}\frac{\partial H_2}{\partial \xi_a}[u]\,T_{k-2}(A_H)_{ia}H_2\nabla^i \tilde{v}\nonumber \\
& \qquad \, + (n-k+1)\int_{B_{R+2\rho}}\eta(\tilde{v}^+)^{q-1}\frac{\partial H_2}{\partial \xi_a}[u]\,F_{ia}\nabla^i \tilde{v}-\int_{B_{R+2\rho}}\eta(\tilde{v}^+)^{q-1}\operatorname{tr}(F)\frac{\partial H_2}{\partial \xi_a}[u]\,\nabla_a \tilde{v}. 
\end{align*}
Substituting this identity for $L_2$ into \eqref{30} yields
\begin{align}\label{4'}
\operatorname{(I_2)}_h & = L_1 - (n-k+1)\int_{B_{R+2\rho}}\eta(\tilde{v}^+)^{q-1}\frac{\partial H_2}{\partial \xi_a}[u]\,T_{k-2}(A_H)_{ia}H_2\nabla^i \tilde{v} \nonumber \\
& \quad + (n-k+1)\int_{B_{R+2\rho}}\eta(\tilde{v}^+)^{q-1}\frac{\partial H_2}{\partial \xi_a}[u]\,F_{ia}\nabla^i \tilde{v} -\int_{B_{R+2\rho}}\eta(\tilde{v}^+)^{q-1}\operatorname{tr}(F)\frac{\partial H_2}{\partial \xi_a}[u]\,\nabla_a \tilde{v}.
\end{align}
We claim that the terms on the top line of the RHS of \eqref{4'} are bounded from below by $- C\rho^{-1}J_h^{(q+k-1)}$. Indeed, as $T_{k-2}(A_H)^{ij} = \partial\sigma_{k-1}(A_H)/\partial A_{ij}$, by Lemma \ref{22} we have $|\nabla_i T_{k-2}(A_H[u])^{ij}| \leq C(\Delta u + C_1)^{k-2}$. It is also clear that $|T_{k-2}(A_H)^{ij}| \leq C(\Delta u + C_1)^{k-2}$. Thus, after integrating by parts using Lemma \ref{22} and applying H\"older's inequality, the lower bound for these terms follows. 

To estimate the penultimate integral in \eqref{4'}, we integrate by parts using Lemma \ref{22} and apply the identity 
\begin{equation*}
\nabla_i\bigg(\frac{\partial H_2}{\partial \xi_a}[u](x)\bigg) = \bigg(\frac{\partial^2 H_2}{\partial \xi_a\partial \xi_b}[u](x)\bigg)\big((A_H)_{ib} + H_{ib}\big) + \bigg(\frac{\partial^2 H_2}{\partial z\partial \xi_a}[u](x)\bigg)\nabla_i u(x) +   \frac{\partial^2 H_2}{\partial x^i\partial \xi_a} [u](x).
\end{equation*}
After an application of H\"older's inequality, this gives
\begin{align*}
\int_{B_{R+2\rho}}\eta(\tilde{v}^+)^{q-1}\frac{\partial H_2}{\partial \xi_a}[u]\,F_{ia}\nabla^i \tilde{v} & \geq  -\frac{1}{q}\int_{B_{R+2\rho}}\eta(\tilde{v}^+)^qF^i_a\frac{\partial^2 H_2}{\partial \xi_a\partial \xi_b}[u]\,(A_H)_{ib} - C\rho^{-1}J_h^{(q+k-1)},
\end{align*}
from which \eqref{3j} follows. \newline

\noindent \textit{\textbf{Step 2:}} In this step we prove Lemma \ref{83'}. Indeed, for $u\in W_{\operatorname{loc}}^{2,q+k}(\Omega)\cap W_{\operatorname{loc}}^{1,\infty}(\Omega)$ (not necessarily solving \eqref{7''}) we have the estimate 
\begin{align*}
-\frac{n-k+1}{q}\int_{B_{R+2\rho}}\eta(\tilde{v}^+)^qF^i_a\frac{\partial^2 H_2}{\partial \xi_a\partial \xi_b}[u]\,(A_H)_{ib} & \geq -C\int_{B_{R+2\rho}}(\tilde{v}^+)^q|F||A_H| \geq - CJ_h^{(q+k)},
\end{align*}
where $F=(F^{ij})$ and the last inequality follows once again from the estimate $|F|\leq C(\Delta u + C_1)^{k-1}$ and H\"older's inequality. Substituting this into \eqref{3j} then yields the desired estimate \eqref{45'}. \newline

\noindent \textit{\textbf{Step 3:}} In this step we prove Lemma \ref{83}. Since we assume in this case that $H_2(x,z,\xi) = H_1(x,z)|\xi|^2$ and that $u$ solves \eqref{7''}, rather than estimating as in Step 2 we observe
\begin{equation}\label{44}
F^i_a\frac{\partial^2 H_2}{\partial \xi_a \partial \xi_b}[u](A_H)_{ib} =2H_1F^i_a\delta^{ab}(A_H)_{ib} = 2H_1F^i_a (A_H)^a_i = 2H_1k\sigma_k(A_H) = 2H_1kf^k. 
\end{equation}
Substituting \eqref{44} into the second integral in \eqref{3j}, we arrive at \eqref{45}.
\end{proof}

\subsubsection{The case $k=2$ for general $H$}\label{C}

In this section we obtain an estimate in the case $k=2$ analogous to \eqref{45} and \eqref{45'}. We do not assume that $H$ is a multiple of the identity and, as in Lemma \ref{83'}, we do not assume that $u$ solves \eqref{7''}:

\begin{lem}\label{84}
	Suppose $H\in C_{\operatorname{loc}}^{1,1}(\Omega\times\mathbb{R}\times\mathbb{R}^n\,;\,\operatorname{Sym}_n(\mathbb{R}))$, $k=2$ and $u\in W_{\operatorname{loc}}^{2,q+2}(\Omega)\cap W_{\operatorname{loc}}^{1,\infty}(\Omega)$ ($q>1$). Then for  $R>0$ with $B_{2R}\Subset\Omega$, $\rho\in(0,\frac{R}{3}]$ and $|h|$ sufficiently small, we have
	\begin{align}\label{46}
	\operatorname{(I_2)}_h & \geq \int_{B_{R+2\rho}}\eta(\tilde{v}^+)^{q-1}\frac{\partial H^{ij}}{\partial \xi_a}[u]\,\nabla_i\nabla_a u\,\nabla_j \tilde{v} - \int_{B_{R+2\rho}}\eta(\tilde{v}^+)^{q-1}\frac{\partial \operatorname{tr}(H)}{\partial \xi_a}[u]\,\operatorname{tr}(A_H)\nabla_a \tilde{v} \nonumber \\
	& \quad  - C\rho^{-1}J_h^{(q+2)}.
	\end{align}
\end{lem}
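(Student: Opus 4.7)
My plan is to apply the $k=2$ specialisation of Lemma~\ref{12}, in which the sum in \eqref{15} collapses to the single $p=1$ term, giving
\begin{equation*}
\nabla_i F[u]^{ij} = \nabla_i H^{ij}[u] - \nabla^j \operatorname{tr}(H[u]).
\end{equation*}
Expanding both divergences by the chain rule, $\operatorname{(I_2)}_h$ decomposes into a collection of integrals whose coefficients are either (i) bounded functions of $(x,u,\nabla u)$ alone (from the $\partial_{x^i}$ and $\partial_z$ terms), or (ii) coefficients multiplying $\nabla_i\nabla_a u$ or $\nabla^j\nabla_a u$ (from the $\partial_{\xi_a}$ terms).

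The integrals of type (i), being of the form $\int \eta(\tilde v^+)^{q-1}\Phi^j(x,u,\nabla u)\,\nabla_j \tilde v$ with $\Phi^j$ bounded, are handled exactly as the lower-order terms in the proof of Lemma~\ref{P}: rewriting $(\tilde v^+)^{q-1}\nabla_j\tilde v = q^{-1}\nabla_j(\tilde v^+)^q$, integrating by parts, and using $|\nabla_j(\eta\Phi^j)|\leq C\rho^{-1} + C(1+|\nabla^2 u|)$ with H\"older's inequality gives a bound of $C\rho^{-1}J_h^{(q+2)}$. The second-order contribution $\int\eta(\tilde v^+)^{q-1}\frac{\partial H^{ij}}{\partial \xi_a}[u]\nabla_i\nabla_a u\,\nabla_j\tilde v$ coming from the first sum is kept as is, reproducing the first target term in \eqref{46}.

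It remains to treat the second-order contribution $-\int\eta(\tilde v^+)^{q-1}\frac{\partial \operatorname{tr}(H)}{\partial \xi_a}[u]\nabla^j\nabla_a u\,\nabla_j\tilde v$. I would split $\nabla^j\nabla_a u = (A_H)^j_a + H^j_a$. The $H^j_a$ piece has a coefficient of the form $\Phi^j(x,u,\nabla u)$ and is absorbed into $C\rho^{-1}J_h^{(q+2)}$ as before. For the $(A_H)^j_a$ piece I would invoke the algebraic identity specific to $k=2$: since $F[u]^{ij}=T_1(A_H)^{ij}=\operatorname{tr}(A_H)\delta^{ij}-(A_H)^{ij}$, we may rewrite $(A_H)^j_a = \operatorname{tr}(A_H)\delta^j_a - F^j_a$. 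Substituting yields two terms: one is precisely the second target of \eqref{46}, and the other is
\begin{equation*}
\int \eta(\tilde v^+)^{q-1}\frac{\partial \operatorname{tr}(H)}{\partial \xi_a}[u]\,F^j_a\,\nabla_j \tilde v.
\end{equation*}

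This leftover integral is the main obstacle, because its coefficient still depends on $\nabla^2 u$ through $F^j_a$, so a naive H\"older estimate would raise the required exponent beyond $q+2$. The plan here is to integrate by parts once more, exploiting Lemma~\ref{22} (which for $k=2$ yields $|\operatorname{div} F[u]|\leq C(1+|\nabla^2 u|)\leq C(\Delta u+C_1)$) together with $|F|\leq C(\Delta u+C_1)$ and the fact that differentiating $\frac{\partial \operatorname{tr}(H)}{\partial \xi_a}[u]$ introduces at most one factor of $\nabla^2 u$. The distributed derivative of $\eta\frac{\partial \operatorname{tr}(H)}{\partial \xi_a}[u]F^j_a$ is therefore pointwise bounded by $C\rho^{-1}(\Delta u+C_1)+C(\Delta u+C_1)^2$; pairing with $(\tilde v^+)^q$ and applying H\"older's inequality controls this piece by $C\rho^{-1}J_h^{(q+2)}$. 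Combining all the bounds produces \eqref{46}.
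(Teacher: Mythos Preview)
Your proposal is correct and follows essentially the same route as the paper's own proof: starting from $\nabla_i F[u]^{ij}=\nabla_i H[u]^{ij}-\nabla^j\operatorname{tr}(H[u])$, expanding by the chain rule, substituting $\nabla^j\nabla^a u=\operatorname{tr}(A_H)\delta^{ja}-F^{ja}-H^{ja}$ to extract the two target terms, and then integrating the leftover pieces (including the $F^{ja}$ term) by parts via Lemma~\ref{22} to absorb them into $C\rho^{-1}J_h^{(q+2)}$. Your treatment of the $F^j_a$ leftover---noting that $|F|,|\operatorname{div}F|\leq C(\Delta u+C_1)$ and that differentiating $\partial_{\xi_a}\operatorname{tr}(H)[u]$ introduces at most one factor of $\nabla^2 u$---matches exactly what the paper does when it refers back to the method of Lemmas~\ref{83} and~\ref{83'}.
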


\begin{rmk}
	The first two terms on the RHS of \eqref{46} will later be shown to cancel with a term arising from our estimate for $\operatorname{(I_3)}_h$ (cf. Remark \ref{14}).
\end{rmk}

\begin{proof}[Proof of Lemma \ref{84}]
As $k=2$, we have $\nabla_i F[u]^{ij} = \nabla_i H[u]^{ij} - \nabla^j \operatorname{tr}(H[u])$ (by \eqref{15}) and $\nabla^j\nabla^a u = \operatorname{tr}(A_H)\delta^{ja} - F^{ja} - H^{ja}$. It follows that
\begin{align}
&\operatorname{(I_2)}_h  = \int_{B_{R+2\rho}}\eta(\tilde{v}^+)^{q-1}\big(\nabla_i H[u]^{ij}-\nabla^j\operatorname{tr}(H[u]) \big)\nabla_j \tilde{v} \nonumber \\
& = \int_{B_{R+2\rho}}\eta(\tilde{v}^+)^{q-1}\bigg(\frac{\partial H^{ij}}{\partial \xi_a}[u]\,\nabla_i\nabla_a u - \frac{\partial \operatorname{tr}(H)}{\partial \xi^a}[u]\,\nabla^j\nabla^a u\bigg)\nabla_j\tilde{v}  \nonumber \\
& \qquad + \int_{B_{R+2\rho}}\eta(\tilde{v}^+)^{q-1}\bigg(\frac{\partial H^{ij}}{\partial x^i}[u] + \frac{\partial H^{ij}}{\partial z}[u]\,\nabla_i u - \frac{\partial \operatorname{tr}(H)}{\partial x_j}[u] - \frac{\partial \operatorname{tr}(H)}{\partial z}[u]\,\nabla^j u\bigg)\nabla_j\tilde{v}\nonumber 
\end{align}
\begin{align}\label{3k}
& = \int_{B_{R+2\rho}}\eta(\tilde{v}^+)^{q-1}\bigg(\frac{\partial H^{ij}}{\partial \xi_a}[u]\,\nabla_i\nabla_a u - \frac{\partial \operatorname{tr}(H)}{\partial \xi^a}[u]\,\operatorname{tr}(A_H)\delta^{ja}\bigg)\nabla_j\tilde{v} \nonumber \\
& \qquad + \int_{B_{R+2\rho}}\eta(\tilde{v}^+)^{q-1}\bigg(\frac{\partial H^{ij}}{\partial x^i}[u] + \frac{\partial H^{ij}}{\partial z}[u]\,\nabla_i u - \frac{\partial \operatorname{tr}(H)}{\partial x_j}[u] - \frac{\partial \operatorname{tr}(H)}{\partial z}[u]\,\nabla^j u \nonumber \\
& \qquad \qquad \qquad \qquad \qquad  + \frac{\partial\operatorname{tr}(H)}{\partial \xi^a}[u](F^{ja}+H^{ja}) \bigg)\nabla_j\tilde{v}.
\end{align}

The integral on the last two lines of \eqref{3k} can be bounded from below by $- C\rho^{-1}J_h^{(q+2)}$ in exactly the same way as in the proof of Lemmas \ref{83} and \ref{83'}: we integrate by parts using Lemma \ref{22}, estimate the relevant quantities in terms of $\Delta u + C_1$ and apply H\"older's inequality. The estimate \eqref{46} then follows. 
\end{proof} 

\subsection{Integral estimates for $F[u]^{ij}\Delta_{ll}^h H[u]_{ij}$}\label{G}

In this section we obtain estimates for the quantity $\operatorname{(I_3)}_h = \sum_l\int_{B_{R+2\rho}}\eta(\tilde{v}^+)^{q-1}F^{ij}\Delta_{ll}^h (H[u])_{ij}$. More precisely, we will prove the following lemma: 

\begin{lem}\label{71}
	Suppose $H\in C_{\operatorname{loc}}^{1,1}(\Omega\times\mathbb{R}\times\mathbb{R}^n\,;\,\operatorname{Sym}_n(\mathbb{R}))$, $R>0$ is such that $B_{2R}\Subset\Omega$ and $\rho\in(0,\frac{R}{3}]$. 
	\begin{enumerate}
		\item[a)] If $u\in W_{\operatorname{loc}}^{2,q+k}(\Omega)\cap W_{\operatorname{loc}}^{1,\infty}(\Omega)$ ($q>1$), then for $|h|$ sufficiently small, we have
		\begin{equation}\label{37'}
		\operatorname{(I_3)}_h \geq \int_{B_{R+2\rho}}\eta (\tilde{v}^+)^{q-1}F^{ij}\frac{\partial H_{ij}}{\partial \xi_a}[u]\,\nabla_a \tilde{v} - CJ_h^{(q+k)}.
		\end{equation} 
		\item[b)] If $u\in W_{\operatorname{loc}}^{2,q+k-1}(\Omega)\cap W_{\operatorname{loc}}^{1,\infty}(\Omega)$ ($q>1$) and $H(x,z,\xi)=H_1(x,z)|\xi|^2 I$ with $H_1 \geq 0$, then for $|h|$ sufficiently small, we have
		\begin{equation}\label{37}
		\operatorname{(I_3)}_h \geq \int_{B_{R+2\rho}}\eta (\tilde{v}^+)^{q-1}F^{ij}\frac{\partial H_{ij}}{\partial \xi_a}[u]\,\nabla_a \tilde{v} - CJ_h^{(q+k-1)}.
		\end{equation} 
	\end{enumerate}
\end{lem}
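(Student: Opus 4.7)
The plan is to expand the second-order difference quotient $\Delta_{ll}^h H[u]_{ij}$ via a pointwise Taylor expansion of $H$ in its composite argument $(x, u(x), \nabla u(x))$, separating the single term that produces the main $\nabla_a \tilde v$-contribution from a quadratic remainder which is then estimated differently in the two cases.

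Step 1 (Taylor expansion). Set $p_0 = (x, u(x), \nabla u(x))$ and $p_\pm = (x \pm he_l, u(x\pm he_l), \nabla u(x\pm he_l))$, so that $p_+ + p_- - 2p_0 = h^2(0, \Delta_{ll}^h u, \Delta_{ll}^h \nabla u)$. Under the $C^{1,1}_{\operatorname{loc}}$ hypothesis on $H$ and the Lipschitz bound on $u$, second-order Taylor expansion of $H_{ij}$ with integral remainder gives, for a.e.\ $x$, the pointwise identity
\begin{align*}
\Delta_{ll}^h H[u]_{ij}(x) = \frac{\partial H_{ij}}{\partial z}[u](x)\,\Delta_{ll}^h u(x) + \frac{\partial H_{ij}}{\partial \xi_a}[u](x)\,\Delta_{ll}^h \nabla^a u(x) + S^{(l,h)}_{ij}(x),
\end{align*}
where $S^{(l,h)}_{ij}$ collects the two integral remainders from $p_0$ to $p_\pm$ (divided by $h^2$) and satisfies $|S^{(l,h)}_{ij}(x)| \leq C\bigl(1 + |\nabla_l^h \nabla u(x)|^2 + |\nabla_l^{-h}\nabla u(x)|^2\bigr)$, since $D^2 H$ is bounded on $\Sigma$. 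Summing over $l$ and using $\sum_l \Delta_{ll}^h \nabla^a u = \nabla^a v = \nabla^a \tilde v$, the $\partial_{\xi_a} H_{ij}$-contribution produces exactly the desired main term on the RHS of \eqref{37'} and \eqref{37}, while the $\partial_z H_{ij}$-contribution obeys $|F^{ij}\partial_z H_{ij}[u]\, v| \leq C(\Delta u + C_1)^k$ and hence, after integrating against $\eta(\tilde v^+)^{q-1}$ and applying H\"older, contributes at most $CJ_h^{(q+k-1)}$.

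Step 2 (Case a). From the crude bound on $S^{(l,h)}_{ij}$ together with $|F|\leq C(\Delta u+C_1)^{k-1}$ and $(\tilde v^+)^{q-1} \leq C(\Delta u+C_1)^{q-1}$, one has
\begin{align*}
\eta(\tilde v^+)^{q-1}|F^{ij} S^{(l,h)}_{ij}| \leq C(\Delta u + C_1)^{q+k-2}\bigl(1 + |\nabla_l^h\nabla u|^2 + |\nabla_l^{-h}\nabla u|^2\bigr).
\end{align*}
H\"older's inequality with exponents $(q+k)/(q+k-2)$ and $(q+k)/2$, combined with the $L^p$ difference-quotient bound \eqref{60}, estimates the resulting integral by $CJ_h^{(q+k)}$, proving \eqref{37'}.

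Step 3 (Case b, positivity). When $H_{ij}(x,z,\xi) = H_1(x,z)|\xi|^2\delta_{ij}$ with $H_1 \geq 0$, a direct computation gives $\partial^2_{\xi_a \xi_b} H_{ij} = 2H_1\delta_{ab}\delta_{ij}$, while the $xx$-, $xz$-, $zz$-blocks of $D^2 H_{ij}$ are at most quadratic in $\xi$ (hence bounded on $\Sigma$) and the mixed $x\xi$-, $z\xi$-blocks are linear in $\xi$. Decomposing $S^{(l,h)}_{ij} = S^{(l,h),\mathrm{quad}}_{ij} + S^{(l,h),\mathrm{rest}}_{ij}$ according to whether the second derivative of $H_{ij}$ in the Taylor remainder is $\partial^2_{\xi\xi}H_{ij}$ or not, one obtains
\begin{align*}
\sum_l F^{ij} S^{(l,h),\mathrm{quad}}_{ij}(x) = 2\operatorname{tr}(F(x))\sum_l \int_0^1 (1-s)\Bigl[H_1(\tilde p^{(s)}_+)|\nabla_l^h \nabla u|^2 + H_1(\tilde p^{(s)}_-)|\nabla_l^{-h}\nabla u|^2\Bigr]ds,
\end{align*}
where $\tilde p^{(s)}_\pm$ denote the intermediate points in the integral remainders. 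Since $H_1 \geq 0$ everywhere on $\Omega\times\mathbb{R}$ and $\operatorname{tr}(F) > 0$ in $\Gamma_k^+$, this expression is pointwise non-negative and may be discarded from the lower bound. The ``rest'' part is at most linear in $|\nabla_l^{\pm h}\nabla u|$, so a computation analogous to Step 2 (but with one less power) bounds its integral contribution by $CJ_h^{(q+k-1)}$, yielding \eqref{37}.

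The main obstacle is Step 3: one has to verify that the non-negative quadratic piece really has the stated form despite $D^2 H_{ij}$ being evaluated at intermediate points (where it exists only a.e.\ under the $C^{1,1}$ hypothesis). The key observation that resolves this is that $H_1 \geq 0$ \emph{as a function} on $\Omega\times\mathbb{R}$, so positivity is preserved under evaluation at any intermediate point and the sign argument goes through.
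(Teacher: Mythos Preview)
Your strategy is essentially that of the paper's Section~4.4.2: expand $\Delta_{ll}^h H[u]_{ij}$ about $(x,u(x),\nabla u(x))$, isolate the $\partial_{\xi_a}H_{ij}[u]\,\nabla_a\Delta_{ll}^h u$ contribution, bound the remainder quadratically in $|\nabla_l^{\pm h}\nabla u|$ for Case~a), and use positivity of the $\xi\xi$-block for Case~b). The paper packages the $\xi$-expansion as a semi-convexity inequality (so only first derivatives of $H$ appear) and handles the $(x,z)$-variation separately as the $E_1$ term, whereas you expand in all variables at once via an integral Taylor remainder; the two routes are close variants. The paper also records an independent proof of Case~b) via a discrete Bochner identity.

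There is, however, one genuine error. In Step~2 (and in your treatment of the $\partial_z H_{ij}$-term in Step~1) you invoke the pointwise bound $(\tilde v^+)^{q-1}\leq C(\Delta u+C_1)^{q-1}$. This is false: $\tilde v = v_h + C_1$ with $v_h=\sum_l\Delta_{ll}^h u$, and there is no pointwise control of $v_h$ by $\Delta u$ for fixed $h$. The fix is straightforward and is what the paper does in \eqref{58}: leave $(\tilde v^+)^{q-1}$ and $(\Delta u+C_1)^{k-1}$ as separate factors and apply H\"older with exponents $\tfrac{q-1}{q+k},\,\tfrac{k-1}{q+k},\,\tfrac{2}{q+k}$. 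Since $J_h^{(q+k)}$ by definition contains both $\int(\tilde v^+)^{q+k}$ and $\int(\Delta u+C_1)^{q+k}$, the desired bound follows; the same three-factor H\"older handles the $\partial_z H_{ij}$-term with $J_h^{(q+k-1)}$.

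Your Step~3 is correct. Note that for $H=H_1(x,z)|\xi|^2 I$ the derivative $\partial^2_{\xi_a\xi_b}H_{ij}=2H_1\delta_{ab}\delta_{ij}$ exists classically, so the a.e.\ issue you raise in the final paragraph is confined to the ``rest'' part, where only a size bound (following from the $C^{1,1}$ hypothesis) is needed, not a sign.
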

\begin{rmk}
	Neither estimate in Lemma \ref{71} requires $u$ to be a solution to \eqref{7''}. 
\end{rmk}

Before proving Lemma \ref{71} we first discuss its consequences, namely the resulting cancellations between $\operatorname{(I_2)}_h$ and $\operatorname{(I_3)}_h$. First consider the case $H=H_1(x,z)|\xi|^2I$ with $H_1\geq 0$:

\begin{cor}\label{J}
	Suppose $f\in C_{\operatorname{loc}}^{1,1}(\Omega\times\mathbb{R})$ is positive, $H\in C_{\operatorname{loc}}^{1,1}(\Omega\times\mathbb{R}\times\mathbb{R}^n\,;\,\operatorname{Sym}_n(\mathbb{R}))$ with $H = H_1(x,z)|\xi|^2I$ and $H_1 \geq 0$, and that $u\in W_{\operatorname{loc}}^{2,q+k-1}(\Omega)\cap W_{\operatorname{loc}}^{1,\infty}(\Omega)$ ($q>1$) is a solution to \eqref{7''}. Then for $R>0$ with $B_{2R}\Subset\Omega$, $\rho\in(0,\frac{R}{3}]$ and $|h|$ sufficiently small, we have
	\begin{equation}\label{AG}
	\operatorname{(I_2)}_h + \operatorname{(I_3)}_h \geq - C\rho^{-1}J_h^{(q+k-1)}.
	\end{equation}
	In particular, 
	\begin{equation}\label{r''}
	\begin{split}
	\frac{q-1}{q^2}  \int_{B_{R+\rho}} {f}^k \frac{\big|\nabla\big((\tilde{v}^+)^{q/2}\big)\big|^2}{\Delta u - \operatorname{tr}(H)}  \leq C\rho^{-2}J_h^{(q+k-1)}.
	\end{split}
	\end{equation}
\end{cor}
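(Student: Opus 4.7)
The plan is to combine Lemma~\ref{83} (the bound on $\operatorname{(I_2)}_h$) with Lemma~\ref{71}(b) (the bound on $\operatorname{(I_3)}_h$) and observe that the leading third-order terms on their right-hand sides cancel exactly. Indeed, when $H_{ij}(x,z,\xi)=H_1(x,z)|\xi|^2\delta_{ij}$, one has the pointwise identity
$$F^{ij}\frac{\partial H_{ij}}{\partial\xi_a}[u]=\operatorname{tr}(F)\,\frac{\partial(H_1|\xi|^2)}{\partial\xi_a}[u],$$
so the integrand appearing in \eqref{37} for $\operatorname{(I_3)}_h$ is precisely the negative of the integrand appearing in \eqref{45} for $\operatorname{(I_2)}_h$. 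Adding the two inequalities therefore cancels the third-order contribution and leaves behind only the lower-order error terms $-C\rho^{-1}J_h^{(q+k-1)}$ and $-CJ_h^{(q+k-1)}$. Since $\rho\le R/3$, the latter can be absorbed into the former at the cost of enlarging the constant (depending on $R$), and this gives the first conclusion \eqref{AG}.

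For the second claim \eqref{r''}, I would then plug \eqref{AG} into the conclusion \eqref{AC} of Lemma~\ref{P}. This isolates $\operatorname{(I_1)}_h$ and yields
$$\operatorname{(I_1)}_h \le C\rho^{-2}J_h^{(q+k-1)} + C\rho^{-1}J_h^{(q+k-1)} \le C\rho^{-2}J_h^{(q+k-1)},$$
again absorbing the $\rho^{-1}$ term into the $\rho^{-2}$ term using $\rho\le R/3$. Finally, Lemma~\ref{21'} gives the pointwise lower bound
$$\operatorname{(I_1)}_h \ge \frac{q-1}{Cq^2}\int_{B_{R+\rho}} f^k\,\frac{\bigl|\nabla\bigl((\tilde v^+)^{q/2}\bigr)\bigr|^2}{\Delta u-\operatorname{tr}(H)},$$
and combining these two inequalities produces \eqref{r''}.

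The main substance of the corollary lies in the preceding two lemmas; at this level, the argument collapses to the cancellation identity displayed above together with a bookkeeping step on the error terms. The only thing to watch out for is that Lemma~\ref{71}(b) is the correct hypothesis to invoke — it is precisely the place where the sign assumption $H_1\ge 0$ and the quadratic dependence on $\xi$ enter, allowing the $\operatorname{(I_3)}_h$ estimate to be stated under the weaker integrability $u\in W^{2,q+k-1}_{\mathrm{loc}}$ (matching that of Lemma~\ref{83}) rather than $u\in W^{2,q+k}_{\mathrm{loc}}$. This matching is what makes the cancellation usable in the desired regularity regime.
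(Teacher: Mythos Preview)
Your proposal is correct and follows exactly the same approach as the paper, which simply states that \eqref{AG} follows by combining \eqref{45} and \eqref{37}, and that \eqref{r''} then follows by substituting \eqref{21} and \eqref{AG} into \eqref{AC}. You have spelled out the cancellation identity and the $\rho$-bookkeeping that the paper leaves implicit, but the logic is identical.
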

\begin{proof}
	The estimate \eqref{AG} follows from combining the estimates \eqref{45} and \eqref{37}. The estimate \eqref{r''} is then obtained by substituting \eqref{21} and \eqref{AG} into \eqref{AC}. 
\end{proof}

Similarly, we obtain the following in the case that $H=H_2(x,z,\xi)I$:

\begin{cor}\label{M}
	Suppose $H\in C_{\operatorname{loc}}^{1,1}(\Omega\times\mathbb{R}\times\mathbb{R}^n\,;\,\operatorname{Sym}_n(\mathbb{R}))$ with $H=H_2(x,z,\xi)I$, and $u\in W_{\operatorname{loc}}^{2,q+k}(\Omega)\cap W_{\operatorname{loc}}^{1,\infty}(\Omega)$ ($q>1$). Then for $R>0$ with $B_{2R}\Subset\Omega$, $\rho\in(0,\frac{R}{3}]$ and $|h|$ sufficiently small, we have
	\begin{equation}\label{K}
	\operatorname{(I_2)}_h + \operatorname{(I_3)}_h \geq - C\rho^{-1}J_h^{(q+k)}.
	\end{equation}
	If, in addition, $u$ solves \eqref{7''} for some positive $f\in C_{\operatorname{loc}}^{1,1}(\Omega\times\mathbb{R})$, then 
	\begin{equation}\label{s'''}
	\begin{split}
	\frac{q-1}{q^2}  \int_{B_{R+\rho}} {f}^k \frac{\big|\nabla\big((\tilde{v}^+)^{q/2}\big)\big|^2}{\Delta u - \operatorname{tr}(H)}  \leq   C\rho^{-2}J_h^{(q+k)}.
	\end{split}
	\end{equation}
\end{cor}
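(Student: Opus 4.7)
The plan is to adapt the proof of Corollary~\ref{J}, now exploiting a cancellation between the estimates \eqref{45'} from Lemma~\ref{83'} and \eqref{37'} from Lemma~\ref{71}(a) that is special to the hypothesis $H = H_2(x,z,\xi) I$. The key algebraic observation is that when $H_{ij} = H_2\,\delta_{ij}$,
\[
F^{ij}\,\frac{\partial H_{ij}}{\partial \xi_a}[u] \;=\; \operatorname{tr}(F)\,\frac{\partial H_2}{\partial \xi_a}[u],
\]
so the leading integral on the RHS of \eqref{37'} annihilates the leading integral on the RHS of \eqref{45'}. What remains after summing the two lower bounds is a pair of error terms, $-C\rho^{-1}J_h^{(q+k)}$ and $-CJ_h^{(q+k)}$; the latter is absorbed into the former since $\rho \le R/3$ forces $1 \le (R/3)\rho^{-1}$. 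This produces \eqref{K}.

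For the second assertion I invoke Lemma~\ref{P}, whose hypotheses are met because $u \in W_{\operatorname{loc}}^{2,q+k}(\Omega) \subset W_{\operatorname{loc}}^{2,q+k-1}(\Omega)$ and $u$ solves \eqref{7''}. Rearranging \eqref{AC} gives
\[
\operatorname{(I_1)}_h \;\le\; -\operatorname{(I_2)}_h - \operatorname{(I_3)}_h + C\rho^{-2}\,J_h^{(q+k-1)},
\]
and substituting \eqref{K} into this produces $\operatorname{(I_1)}_h \le C\rho^{-1} J_h^{(q+k)} + C\rho^{-2} J_h^{(q+k-1)}$. Because $\Delta u + C_1 \ge 1$ a.e.\ in $B_{2R}$, one has $J_h^{(q+k-1)} \le J_h^{(q+k)} + C$ together with $J_h^{(q+k)} \ge c(R) > 0$; combined with $\rho^{-1} \le (R/3)\rho^{-2}$, this collapses the bound to $\operatorname{(I_1)}_h \le C\rho^{-2} J_h^{(q+k)}$. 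Chaining with the reverse pointwise-to-integral inequality \eqref{21} from Lemma~\ref{21'} then yields \eqref{s'''}.

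The bulk of the work has already been done in Lemmas~\ref{83'}, \ref{71}, \ref{P} and \ref{21'}; the structural heart of the argument is the trace identity in the first paragraph, and the only residual bookkeeping is the minor discrepancy between $J_h^{(q+k-1)}$ and $J_h^{(q+k)}$, which is dispatched by the uniform positive lower bound on $\Delta u + C_1$. I do not expect any genuine obstacle here beyond correctly tracking signs through the cancellation.
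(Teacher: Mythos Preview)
Your proposal is correct and follows essentially the same route as the paper: the paper's proof of \eqref{K} simply reads ``combine \eqref{45'} and \eqref{37'}'', and your trace identity $F^{ij}\partial_{\xi_a}H_{ij} = \operatorname{tr}(F)\partial_{\xi_a}H_2$ is exactly the mechanism that makes this combination work. Your treatment of \eqref{s'''} via Lemma~\ref{P} and Lemma~\ref{21'} is likewise what the paper intends, and you have merely filled in the trivial bookkeeping (the $J_h^{(q+k-1)}$ versus $J_h^{(q+k)}$ comparison) that the paper leaves implicit.
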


\begin{proof}
	The estimate \eqref{K} follows from combining the estimates \eqref{45'} and \eqref{37'}. The estimate \eqref{s'''} is then obtained by substituting \eqref{21} and \eqref{K} into \eqref{AC}. 
\end{proof}

A similar cancellation also holds in the setting of Theorem \ref{56}, although this requires a little more work:

\begin{cor}\label{N}
	Suppose $H\in C_{\operatorname{loc}}^{1,1}(\Omega\times\mathbb{R}\times\mathbb{R}^n\,;\,\operatorname{Sym}_n(\mathbb{R}))$, $k=2$ and $u\in W_{\operatorname{loc}}^{2,q+2}(\Omega)\cap W_{\operatorname{loc}}^{1,\infty}(\Omega)$ ($q>1$). Then for $|h|$ sufficiently small, we have
	\begin{equation}\label{L}
	\operatorname{(I_2)}_h + \operatorname{(I_3)}_h \geq - C\rho^{-2}J_h^{(q+2)}.
	\end{equation}
	If, in addition, $u$ solves \eqref{7''} for some positive $f\in C_{\operatorname{loc}}^{1,1}(\Omega\times\mathbb{R})$, then 
	\begin{equation}\label{s''}
	\begin{split}
	\frac{q-1}{q^2}  \int_{B_{R+\rho}} {f}^2 \frac{\big|\nabla\big((\tilde{v}^+)^{q/2}\big)\big|^2}{\Delta u - \operatorname{tr}(H)}  \leq  C\rho^{-1}J_h^{(q+2)}.
	\end{split}
	\end{equation}
\end{cor}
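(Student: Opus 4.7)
The plan is to prove \eqref{L} and then obtain \eqref{s''} by combining with Lemmas \ref{P} and \ref{21'}.

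\textbf{Cancellation step.} With $k=2$, $F^{ij}=\operatorname{tr}(A_H)\delta^{ij}-\nabla^i\nabla^j u+H^{ij}$, so the integrand $F^{ij}\partial_{\xi_a}H_{ij}\nabla_a\tilde v$ in \eqref{37'} (Lemma \ref{71}a, valid since $u\in W_{\operatorname{loc}}^{2,q+2}$) splits into three pieces. Adding the resulting lower bound for $\operatorname{(I_3)}_h$ to that for $\operatorname{(I_2)}_h$ in \eqref{46} (Lemma \ref{84}), the two $\operatorname{tr}(A_H)\partial_{\xi_a}\!\operatorname{tr}(H)\nabla_a\tilde v$ contributions cancel exactly, and after renaming a dummy index $j\leftrightarrow a$ in the $\partial_{\xi_a}H^{ij}\nabla_i\nabla_a u\,\nabla_j\tilde v$ piece of \eqref{46} (using $H^{ij}=H^{ji}$) we are left with
\[
\operatorname{(I_2)}_h+\operatorname{(I_3)}_h \;\geq\; T^{*} + \mathcal{E} - C\rho^{-1}J_h^{(q+2)},
\]
where, with $B_{ij}^a := \partial_{\xi_j}H^{ia}-\partial_{\xi_a}H^{ij}$,
\[
T^{*} = \int_{B_{R+2\rho}}\eta(\tilde v^+)^{q-1}B_{ij}^a[u]\,\nabla_i\nabla_j u\,\nabla_a\tilde v,\quad \mathcal{E}=\int_{B_{R+2\rho}}\eta(\tilde v^+)^{q-1}H^{ij}\partial_{\xi_a}H_{ij}[u]\,\nabla_a\tilde v.
\]
This is the cancellation advertised after Lemma \ref{84}. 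The structural feature that makes the remainder work out is that $B_{ij}^a$ is antisymmetric in the pair $(j,a)$.

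\textbf{Antisymmetric rewriting and integrations by parts.} Since $\nabla_j\nabla_a\tilde v$ is symmetric in $(j,a)$, antisymmetry of $B_{ij}^a$ gives $B_{ij}^a\nabla_i u\,\nabla_j\nabla_a\tilde v=0$, hence $B_{ij}^a\nabla_i\nabla_j u\,\nabla_a\tilde v=B_{ij}^a\nabla_j(\nabla_i u\cdot\nabla_a\tilde v)$. Integrating $T^{*}$ by parts on $\nabla_j$ produces three sub-pieces: (a) where $\nabla_j$ falls on $\eta$; (b) where it falls on $(\tilde v^+)^{q-1}$; (c) where it falls on $B_{ij}^a$. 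Sub-piece (b) contains $\nabla_j\tilde v\,\nabla_a\tilde v$ (symmetric in $(j,a)$), killed by the antisymmetric $B_{ij}^a$, so it vanishes. For (a) and (c) I apply a second IBP using $(\tilde v^+)^{q-1}\nabla_a\tilde v=q^{-1}\nabla_a((\tilde v^+)^q)$ to eliminate the $\nabla\tilde v$ factor, which has no uniform-in-$h$ $L^{q+2}$ bound under $u\in W_{\operatorname{loc}}^{2,q+2}$; both sub-pieces reduce to $q^{-1}\int(\tilde v^+)^q\nabla_a(\cdot)$. The term $\mathcal{E}$ is handled by the same second IBP.

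\textbf{Main obstacle and conclusion.} The delicate step is sub-piece (c): the second IBP formally produces the coefficient $\nabla_a\nabla_j B_{ij}^a$, whose chain-rule evaluation requires derivatives of $u$ and $H$ of orders exceeding our regularity. For smooth data, however, symmetry of $\nabla_a\nabla_j$ combined with antisymmetry of $B_{ij}^a$ in $(j,a)$ forces $\nabla_a\nabla_j B_{ij}^a\equiv 0$ pointwise. To make this rigorous under $u\in W_{\operatorname{loc}}^{2,q+2}$, I approximate $u$ by smooth $u_{(m)}\to u$ in $W_{\operatorname{loc}}^{2,q+2}$; since $h$ is fixed, $\tilde v_m\to\tilde v$ in $W_{\operatorname{loc}}^{2,q+2}$ and $B_{ij}^a[u_{(m)}]\to B_{ij}^a[u]$ in $W_{\operatorname{loc}}^{1,q+2}$, so the identity derived at the smooth level transfers to the limit and eliminates the would-be $\nabla^3$ contribution. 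The remaining integrands from (a) and (c) are bounded pointwise by $C(\rho^{-2}+\rho^{-1}|\nabla^2 u|+|\nabla^2 u|^2)$ and, paired with $(\tilde v^+)^q$, integrate to at most $C\rho^{-2}J_h^{(q+2)}$ via H\"older and the standing bound $|\nabla^2 u|\leq\Delta u+C_1$; the same recipe yields $|\mathcal{E}|\leq C\rho^{-1}J_h^{(q+2)}$. Combining everything gives \eqref{L}, and \eqref{s''} then follows by substituting \eqref{L} and \eqref{21} into \eqref{AC} and using $J_h^{(q+1)}\leq CJ_h^{(q+2)}$.
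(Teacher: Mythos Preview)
Your cancellation strategy is the same as the paper's: both reduce $\operatorname{(I_2)}_h+\operatorname{(I_3)}_h$ to an integral against the antisymmetric tensor $B_{ij}^a=\partial_{\xi_j}H_{ia}-\partial_{\xi_a}H_{ij}$ and then exploit that antisymmetry. The paper packages the last step as Lemma~\ref{40}: writing the residual as $\int (\eta B_{ij}^a)\nabla_a(\nabla_i u)\,\nabla^j(\tilde v^+)^q$, one integration by parts on $\nabla^j$ kills $B_{ij}^a\nabla_a\nabla^j(\nabla_i u)$ by antisymmetry, leaving $\int|\operatorname{div}(\eta B)||\nabla^2 u|(\tilde v^+)^q$, which needs only $B\in L^\infty$ and $\operatorname{div}(\eta B)\in L^r$ --- both immediate from $H\in C^{1,1}_{\mathrm{loc}}$.

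Your route instead performs two integrations by parts, and the justification of the second one for sub-piece (c) has a gap. You claim that for smooth $u$ one has $\nabla_a\nabla_j B_{ij}^a\equiv 0$ pointwise, and then approximate $u$. But $B_{ij}^a$ is built from $\partial_\xi H$, which under $H\in C^{1,1}_{\mathrm{loc}}$ is only Lipschitz; hence even for $u\in C^\infty$ the composite $B_{ij}^a[u]$ is merely $C^{0,1}$, and $\nabla_a\nabla_j B_{ij}^a[u]$ has no pointwise meaning. Approximating $u$ alone cannot cure this. What you actually need is the identity $\int \nabla_j B_{ij}^a\,\nabla_a\Psi=0$ for $\Psi=\eta\nabla_i u\,(\tilde v^+)^q\in W^{1,*}_0$, and that is proved by approximating $\Psi$ (not $u$) by smooth compactly supported functions and moving $\nabla_j$ back onto the test function --- which is precisely the mechanism of Lemma~\ref{40}. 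So your argument is salvageable, but as written the regularity bookkeeping is incorrect; the paper's single-IBP formulation is the clean way to make the antisymmetry rigorous under $H\in C^{1,1}_{\mathrm{loc}}$.
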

\begin{proof}
	The estimate \eqref{s''} will immediately follow once \eqref{L} is established, by substituting \eqref{21} and \eqref{L} into \eqref{AC}. 
	
	Taking $k=2$ in Lemma \ref{71} a) and using $F^{ij} = \operatorname{tr}(A_H)\delta^{ij} - \nabla^i\nabla^j u - H^{ij}$, we see
\begin{align}\label{76}
\operatorname{(I_3)}_h & \geq \int_{B_{R+2\rho}}\eta(\tilde{v}^+)^{q-1}\operatorname{tr}(A_H)\frac{\partial \operatorname{tr}(H)}{\partial \xi_a}[u]\,\nabla_a\tilde{v} - \int_{B_{R+2\rho}}\eta(\tilde{v}^+)^{q-1}\nabla^i\nabla^j u\frac{\partial H_{ij}}{\partial \xi_a}[u]\,\nabla_a\tilde{v} \nonumber \\
& \quad - \int_{B_{R+2\rho}}\eta(\tilde{v}^+)^{q-1}H^{ij}\frac{\partial H_{ij}}{\partial \xi_a}[u]\,\nabla_a\tilde{v} - CJ_h^{(q+2)}.
\end{align}
Now, the first term on the RHS of \eqref{76} cancels with the second term on the RHS of \eqref{46}, and the first term on the last line of \eqref{76} can be estimated by $-C\rho^{-1}J_h^{(q+2)}$, after integrating by parts and applying H\"older's inequality. Therefore, combining \eqref{46} and \eqref{76}, we obtain
\begin{align}
\operatorname{(I_2)}_h + \operatorname{(I_3)}_h  \geq  \frac{1}{q}\int_{B_{R+2\rho}}\eta\frac{\partial H_{ij}}{\partial \xi_a}[u]\,\bigg(\nabla^i\nabla_a u\,\nabla^j (\tilde{v}^+)^q - \nabla^i\nabla^j u\,\nabla_a (\tilde{v}^+)^q \bigg) - C\rho^{-1}J_h^{(q+2)}.\nonumber
\end{align}

Now, if $u$ were to have enough regularity, we could integrate by parts here, observe that the third derivatives of $u$ cancel, and obtain \eqref{L} by estimating the remaining terms in the usual way. To circumvent the lack of regularity, we instead apply the following lemma:

\begin{lem}\label{40}
	Let $U\subset\mathbb{R}^n$ be a smooth bounded domain and let $B\in L^\infty(U;\mathbb{R}^{n\times n})$ be an antisymmetric matrix with $\operatorname{supp}(B)\Subset U$. For $1\leq p<\infty$ and $p'\defeq\frac{p}{p-1}$, consider the bilinear form $\mathcal{B}:W^{1,p}(U)\times W^{1,p'}(U)\rightarrow \mathbb{R}$ given by
	\begin{equation}\label{47}
	\mathcal{B}(g,h) = \int_U B_j^a\,\nabla_a g\,\nabla^j h.
	\end{equation}
	If $\operatorname{div}B\in L^q(U;\mathbb{R}^n)$ with $\frac{1}{p} + \frac{1}{q} = 1 - \frac{1}{r}$ for some $1\leq q,r\leq \infty$, then we have the estimate
	\begin{equation}\label{43}
	|\mathcal{B}(g,h)| \leq \int_U |\operatorname{div}B||\nabla g||h|
	\end{equation}
	for all $g\in W^{1,p}(U)$ and $h \in W^{1,p'}(U)\cap L^r(U)$. 
\end{lem}
Before proving Lemma \ref{40}, we use it to complete the proof of \eqref{L}: for each $i\in\{1,\dots,n\}$, taking $B^a_j = \eta \frac{\partial H_{ij}}{\partial \xi_a}[u] - \eta\frac{\partial H_i^a}{\partial \xi^j}[u]$, $g= \nabla_i u$ and $h = (\tilde{v}^+)^q$ in Lemma \ref{40} we obtain 

\begin{align}
\int_{B_{R+2\rho}}\eta\frac{\partial H_{ij}}{\partial \xi_a}[u]\,\bigg(\nabla^i \nabla_a u \nabla^j(\tilde{v}^+)^q-\nabla^i\nabla^j u\nabla_a(\tilde{v}^+)^q\bigg) &\stackrel{\eqref{43}}{\leq}  C\rho^{-1}\int_{B_{R+2\rho}}(\Delta u +C_1)^{2}(\tilde{v}^+)^q \nonumber \\
&  \,\,\, \leq  \,\, \,C\rho^{-1}J_h^{(q+2)}. \nonumber
\end{align}

It remains to prove Lemma \ref{40}. By a standard approximation argument, it suffices to prove \eqref{43} for $g,h\in C^\infty(U)$. We are then justified in integrating by parts in \eqref{47}, giving
\begin{align*}
|\mathcal{B}(g,h)|  = \bigg|\int_U \bigg(\nabla^j B_j^a\, \nabla_a g + \underbrace{B^a_j\,\nabla_a\nabla^jg}_{=0}\bigg)h\bigg|  \leq \int_U |\operatorname{div}B||\nabla g||h|, 
\end{align*}
where we have used antisymmetry of $B$ to assert that $B^a_j\,\nabla_a\nabla^jg = 0$. 
\end{proof}

\subsubsection{Proof of Lemma \ref{71} b)}\label{3n}

We now turn our attention back to the proof of Lemma \ref{71}. Whilst the two estimates \eqref{37'} and \eqref{37} can be dealt with simultaneously (see the proof of Lemma \ref{71} in Section \ref{D}), for illustrative purposes we first provide a more direct proof of \eqref{37}, which includes the $\sigma_k$-Yamabe equation in the positive case. Indeed, when $H=H_1(x,z)|\xi|^2I$ we are able to calculate $\Delta_{ll}^h (H[u])_{ij}$ explicitly by deriving the following discrete version of the Bochner identity, avoiding the more involved estimates required for the general case. In what follows, we denote
\begin{equation*}
u_l^h(x)\defeq u(x+he_l).
\end{equation*}

\begin{lem}[Discrete Bochner identity]\label{H'}
	Suppose $H_1\in C^0(\Omega\times\mathbb{R})$ and $l\in\{1,\dots,n\}$. Then
	\begin{align}\label{q}
	\Delta_{ll}^h\big(H_1[u]|\nabla u|^2\big) & = 2H_1\nabla^i u\nabla_i\Delta_{ll}^h u + (H_1[u])^{-h}_l\big|\nabla\nabla_l^{-h} u\big|^2 + (H_1[u])^h_l\big|\nabla\nabla_l^h u\big|^2 \nonumber \\
	& \quad + \nabla_l^{-h}\nabla_i u \nabla^i u \nabla_l^{-h}H_1[u] + \nabla_l^h\nabla^i u \nabla_i u \nabla_l^h H_1[u] \nonumber \\
	& \quad + \nabla_l^h\Big(\nabla_i u(\nabla^i u)_l^{-h}\nabla_l^{-h}H_1[u]\Big). 
	\end{align}
\end{lem}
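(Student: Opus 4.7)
The identity \eqref{q} holds pointwise and is essentially algebraic. My plan is to verify it by repeated application of discrete Leibniz rules. The starting point is the second-order product rule
\[
\Delta_{ll}^h(fg) = f\,\Delta_{ll}^h g + g\,\Delta_{ll}^h f + \nabla_l^h f\,\nabla_l^h g + \nabla_l^{-h}f\,\nabla_l^{-h}g,
\]
which follows from expanding $(fg)(x+he_l) - 2(fg)(x) + (fg)(x-he_l)$ and dividing by $h^2$. I apply it with $f = H_1[u]$ and $g = |\nabla u|^2$.

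First I compute $\Delta_{ll}^h|\nabla u|^2$ by applying the same product rule to $\nabla_i u\,\nabla^i u$ (summed on $i$) and using that $\nabla_i$ commutes with $\Delta_{ll}^h$ and $\nabla_l^{\pm h}$; this yields
\[
\Delta_{ll}^h|\nabla u|^2 = 2\nabla^i u\,\nabla_i\Delta_{ll}^h u + |\nabla\nabla_l^h u|^2 + |\nabla\nabla_l^{-h}u|^2.
\]
Thus $H_1\,\Delta_{ll}^h|\nabla u|^2$ already produces the first three terms on the RHS of \eqref{q}, except that the coefficient of $|\nabla\nabla_l^{\pm h}u|^2$ is $H_1$ rather than $(H_1)_l^{\pm h}$. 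Using $(H_1)_l^{\pm h} = H_1 \pm h\,\nabla_l^{\pm h}H_1$ corrects this at the cost of two residual terms $\pm h\,\nabla_l^{\pm h}H_1\,|\nabla\nabla_l^{\pm h}u|^2$. Next, I expand the two cross terms $\nabla_l^{\pm h}H_1\cdot\nabla_l^{\pm h}|\nabla u|^2$ via the first-order Leibniz rule $\nabla_l^h(AB) = A\,\nabla_l^h B + B_l^h\,\nabla_l^h A$ applied to $\nabla_i u\,\nabla^i u$; this produces the desired terms $\nabla_l^{\pm h}\nabla_i u\,\nabla^i u\,\nabla_l^{\pm h}H_1$ plus further residuals of the form $\nabla_l^{\pm h}H_1\cdot(\nabla^i u)_l^{\pm h}\,\nabla_i\nabla_l^{\pm h}u$.

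What remains is to show that these collected residuals, together with $|\nabla u|^2\,\Delta_{ll}^h H_1$, recombine into $\nabla_l^h\bigl(\nabla_i u\,(\nabla^i u)_l^{-h}\,\nabla_l^{-h}H_1[u]\bigr)$; this is verified by expanding the triple product via $\nabla_l^h(ABC) = AB\,\nabla_l^h C + C_l^h A\,\nabla_l^h B + C_l^h B_l^h\,\nabla_l^h A$ and matching. The main obstacle is the bookkeeping: several mixed-order-in-$h$ contributions must cancel, and the asymmetric placement of $\pm h$-shifts on $H_1$ versus on factors of $\nabla u$ requires care. A clean shortcut is to multiply \eqref{q} by $h^2$ and expand both sides directly in terms of the point values $H_1[u]$ and $\nabla_i u$ at $x$ and $x\pm he_l$; it then suffices to verify that the coefficients of $H_1[u](x\pm he_l)$ on the RHS simplify to $|\nabla u|^2(x\pm he_l)$ and that of $H_1[u](x)$ to $-2|\nabla u|^2(x)$, matching the LHS term by term.
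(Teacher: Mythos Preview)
Your approach is correct but organised differently from the paper's. You begin with the second-order product rule $\Delta_{ll}^h(fg) = f\Delta_{ll}^h g + g\Delta_{ll}^h f + \nabla_l^h f\nabla_l^h g + \nabla_l^{-h}f\nabla_l^{-h}g$, separate $H_1[u]$ from $|\nabla u|^2$, and must then reassemble the final ``divergence'' term $\nabla_l^h\bigl(\nabla_i u\,(\nabla^i u)_l^{-h}\,\nabla_l^{-h}H_1[u]\bigr)$ from several residual pieces (with one minor sign slip: the correction residuals are $\mp h\,\nabla_l^{\pm h}H_1\,|\nabla\nabla_l^{\pm h}u|^2$, not $\pm h$; the cancellation still goes through). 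The paper instead writes $\Delta_{ll}^h = \nabla_l^h\nabla_l^{-h}$ and applies the \emph{first}-order product rule \eqref{m} twice to the triple product $H_1[u]\,\nabla_i u\,\nabla^i u$; the point is that after the very first application one of the three summands is already $\nabla_l^h\bigl(\nabla_i u\,(\nabla^i u)_l^{-h}\,\nabla_l^{-h}H_1[u]\bigr)$, so no reassembly is needed and the remaining two summands expand directly into the other five terms of \eqref{q}. Your route is more systematic but heavier on bookkeeping; the paper's is shorter because it never destroys the divergence structure of the last term. Your closing shortcut---multiplying by $h^2$ and matching coefficients of $H_1[u](x)$ and $H_1[u](x\pm he_l)$---is a perfectly valid independent verification, and indeed these coefficients do collapse to $-2|\nabla u|^2(x)$ and $|\nabla u|^2(x\pm he_l)$ respectively.
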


Assuming the validity of Lemma \ref{H'}, the proof of \eqref{37} in Lemma \ref{71} b) is then straightforward:

\begin{proof}[Proof of Lemma \ref{71} b)]
	Substituting the discrete Bochner identity \eqref{q} into the definition of $\operatorname{(I_3)}_h$ and dropping the two positive terms, we obtain
	\begin{align}\label{3l}
	\operatorname{(I_3)}_h & \geq 2\int_{B_{R+2\rho}}\eta(\tilde{v}^+)^{q-1}\operatorname{tr}(F)H_1\nabla^i u \nabla_i \tilde{v}  \nonumber \\
	& \quad + \sum_l \int_{B_{R+2\rho}}\eta(\tilde{v}^+)^{q-1}\operatorname{tr}(F)\bigg(\nabla_l^{-h}\nabla_i u \nabla^i u \nabla_l^{-h}H_1[u] + \nabla_l^h\nabla^i u \nabla_i u \nabla_l^h H_1[u] \nonumber \\
	& \qquad \qquad \qquad \qquad \qquad \qquad \qquad +  \nabla_l^h\Big(\nabla_i u(\nabla^i u)_l^{-h}\nabla_l^{-h}H_1[u]\Big)\bigg). 
	\end{align}
	After applying the difference quotient product rule 
	\begin{equation}\label{m}
	\nabla^h_l (uv)(x) = u^h_l(x)\nabla_l^h v(x) + v(x)\nabla_l^h u(x)
	\end{equation}
	to the integrand in the last line of \eqref{3l}, we may then estimate the last two lines of \eqref{3l} in the usual way. Namely, after applying the bound $\operatorname{tr}(F)\leq C(\Delta u + C_1)^{k-1}$, using H\"older's inequality and appealing to \eqref{60}, we see that the last two lines of \eqref{3l} are collectively bounded from below by $-CJ_h^{(q+k-1)}$. The estimate \eqref{37} then follows.
\end{proof}

\begin{proof}[Proof of the discrete Bochner identity (Lemma \ref{H'})]
	 Using the product rule \eqref{m} to first calculate $\nabla^{-h}_l(H_1[u]|\nabla u|^2)$, we see
	\begin{align}
	&\Delta_{ll}^h \big(H_1[u]|\nabla u|^2\big) = \nabla^h_l\Big(\nabla_l^{-h}\big(H_1[u]\nabla^i u \nabla_i u\big)\Big) \nonumber \\
	& \,\, = \nabla^h_l \Big((H_1[u]\nabla^i u)^{-h}_l \nabla_l^{-h}\nabla_i u \Big) + \nabla_l^h \Big(H_1[u]\nabla_i u\nabla_l^{-h}\nabla^i u\Big) + \nabla_l^h\Big(\nabla_i u(\nabla^i u)_l^{-h}\nabla_l^{-h}H_1[u]\Big). \nonumber 
	\end{align}
	On the other hand, noting that $\nabla_l^h u_l^{-h} u(x) = \nabla_l^{-h} u(x)$ and $(\nabla^{-h}_l u)^h_l(x) = \nabla^h_l u(x)$, we also have by \eqref{m} the identities
	\begin{align*}
	\nabla^h_l  \Big((H_1[u]\nabla^i u)^{-h}_l \nabla_l^{-h}\nabla_i u \Big) & = H_1\nabla^i u \nabla^h_l\nabla_l^{-h}\nabla_i u + \nabla_l^{-h}\nabla_i u \nabla_l^{-h}(H_1[u]\nabla^i u) \nonumber \\
	& = H_1\nabla^i u \nabla_i\Delta_{ll}^h u + (H_1[u])^{-h}_l\big|\nabla\nabla_l^{-h} u\big|^2 + \nabla_l^{-h}\nabla_i u \nabla^i u \nabla_l^{-h}H_1[u]
	\end{align*}
	and 
	\begin{align*}
	\nabla_l^h \Big(H_1[u]\nabla_i u\nabla_l^{-h}\nabla^i u\Big)  & = \nabla^h_l \nabla^i u \nabla_l^h\Big(H_1[u]\nabla_i u \Big) + H_1\nabla_i u \nabla_l^h\nabla_l^{-h}\nabla^i u \nonumber \\
	& = (H_1[u])^h_l\big|\nabla\nabla_l^h u\big|^2 + \nabla_l^h\nabla^i u \nabla_i u \nabla_l^h H_1[u] + H_1\nabla_i u \nabla^i\Delta_{ll}^h u. 
	\end{align*}
	Putting these three identities together, we arrive at \eqref{q}. 
\end{proof}

\subsubsection{Proof of Lemma \ref{71} in the general case}\label{D}

We now prove Lemma \ref{71} in the general case. To simplify our analysis, we will make use of the following semi-convexity property of $H\in C_{\operatorname{loc}}^{1,1}(\Omega\times\mathbb{R}\times\mathbb{R}^n;\operatorname{Sym}_n(\mathbb{R}))$: there exists a constant $C_\Sigma>0$ such that the mapping $\xi\longmapsto H(x,z,\xi) + C_\Sigma|\xi|^2 I$ is convex for all $(x,z,\xi)\in \Sigma$ (this is an immediate consequence of the $C_{\operatorname{loc}}^{1,1}$ regularity of $H$). We will make use of this property in the form
\begin{align}\label{52}
H_{ij}(x,z,\xi)  \geq H_{ij}(x,z,\zeta)+ \frac{\partial H_{ij}}{\partial \xi_a}(x,z,\zeta)(\xi-\zeta)_a - C_\Sigma\delta_{ij}|\xi-\zeta|^2
\end{align}
for all $(x,z,\xi),(x,z,\zeta)\in \Sigma$. Note that in Case 1 of Theorem \ref{8}, we may take $C_\Sigma=0$ in \eqref{52}, as $H(x,z,\xi)=H_1(x,z)|\xi|^2I$ is convex with respect to $\xi$ when $H_1\geq 0$. The inequality \eqref{52} will play a role similar to that of the discrete Bochner identity used in the previous subsection (see Lemma \ref{H'}).

\begin{proof}[Proof of Lemma \ref{71}]
We first prove Lemma \ref{71} a). It suffices to show that
\begin{align}\label{37''}
F^{ij}\Delta_{ll}^h (H[u])_{ij} \geq F^{ij}\frac{\partial H_{ij}}{\partial \xi_a}[u]\,\nabla_a\Delta_{ll}^h u + \text{error terms} \quad \forall l\in\{1,\dots,n\},
\end{align}
where the error terms satisfy
\begin{equation}\label{51}
\int_{B_{R+2\rho}}\eta(\tilde{v}^+)^{q-1}|\text{error terms}| \leq  CJ_h^{(q+k)}.
\end{equation}
To keep notation succinct, we denote $x^\pm = x\pm he_l$ and $F=(F^{ij})$ in what follows. \newline

\noindent \textbf{\textit{Step 1:}} We first prove a lower bound for $F^{ij}(x) \Delta_{ll}^h (H[u](x))_{ij}$, identifying the error terms in \eqref{37''}. Observe that by \eqref{52} and the fact that $F^{ij}$ is positive definite in $\Gamma_k^+$, we have
\begin{align}
& \frac{F^{ij}(x)}{h^2}\bigg[(H[u](x^\pm))_{ij} - H(x^\pm, u(x^\pm), \nabla u(x))_{ij}\bigg] \nonumber \\
& \, \geq \frac{F^{ij}(x)}{h^2}\frac{\partial H_{ij}}{\partial \xi_a}\big(x^\pm,u(x^\pm),\nabla u(x)\big)\big(\nabla_a u(x^\pm )-\nabla_a u(x)\big) - \frac{C_\Sigma|F|}{h^2}\big|\nabla u(x^\pm)-\nabla u(x)\big|^2 \nonumber \\
& \geq \frac{F^{ij}(x)}{h^2}\frac{\partial H_{ij}}{\partial \xi_a}[u](x)\big(\nabla_a u(x^\pm)-\nabla_a u(x)\big) - \frac{C_\Sigma|F|}{h^2}\big|\nabla u(x^\pm)-\nabla u(x)\big|^2 \nonumber \\
& \quad\, - \frac{C|F|}{|h|}\big|\nabla u(x^\pm)- \nabla u(x)\big| \quad\text{for a.e. }x\in B_{R+2\rho}, \nonumber 
\end{align} 
where to obtain the second inequality we have estimated 
\begin{equation*}
\bigg|\frac{\partial H_{ij}}{\partial \xi_a}\big(x^\pm,u(x^\pm),\nabla u(x)\big)  - \frac{\partial H_{ij}}{\partial \xi_a}[u](x)\bigg| \leq \|H\|_{C^{1,1}(\Sigma)}\big(|x^\pm-x| + |u(x^\pm)-u(x)|\big) \leq C|h|. 
\end{equation*}
Recalling the definition of $\Delta_{ll}^h (H[u](x))_{ij}$, we therefore see that for a.e. $x\in B_{R+2\rho}$,
\begin{align}\label{34}
F^{ij}(x) &\Delta_{ll}^h (H[u](x))_{ij} \nonumber \\
& \, \geq F^{ij}(x)\frac{\partial H_{ij}}{\partial \xi_a}[u](x)\nabla_a \Delta_{ll}^h u(x) \nonumber \\
& \quad + \frac{F^{ij}(x)}{h^2}\bigg(H(x^+,u(x^+),\nabla u(x))_{ij} - 2(H[u](x))_{ij}+  H(x^-,u(x^-),\nabla u(x))_{ij}\bigg) \nonumber \\
& \quad  - C_\Sigma|F||\nabla_l^h\nabla u|^2 -  C_\Sigma|F||\nabla_l^{-h}\nabla u|^2 - C|F||\nabla_l^h\nabla u| - C|F||\nabla_l^{-h}\nabla u|.
\end{align}

\noindent\textbf{\textit{Step 2:}} To prove \eqref{37'}, we need to show that the error terms in last two lines of \eqref{34} satisfy \eqref{51}. Formally, these terms behave like $|F|(|\nabla^2 u|^2 + |\nabla^2 u|)$, and so by the estimate $|F| \leq C(\Delta u + C_1)^{k-1}$, the bound \eqref{51} is then conceivable. We now give the details.

 Denote the terms on the penultimate line of \eqref{34} collectively by $E_1$, and the terms on the last line of \eqref{34} collectively by $E_2$. The error terms in $E_2$ are easier to deal with. Indeed, by the bound $|F| \leq C(\Delta u + C_1)^{k-1}$, H\"older's inequality and \eqref{60}, we have
\begin{align}\label{58}
&\int_{B_{R+2\rho}}\eta(\tilde{v}^+)^{q-1}|F||\nabla_l^{\pm h}\nabla u|^2  \leq C (J_h^{(q+k)})^\frac{q+k-2}{q+k} \bigg(\int_{B_{R+2\rho}}|\nabla_l^{\pm h}\nabla u|^{q+k}\bigg)^\frac{2}{q+k}  \leq CJ_h^{(q+k)}.
\end{align}
 In exactly the same way, one can show $\int_{B_{R+2\rho}}\eta(\tilde{v}^+)^{q-1}|F||\nabla_l^{\pm h} \nabla u|\leq C J_h^{(q+k-1)}$, and combining these estimates we obtain  $\int_{B_{R+2\rho}}\eta(\tilde{v}^+)^{q-1}|E_2| \leq  CJ_h^{(q+k)}$. 

We now treat the error terms in $E_1$. We first observe that by the fundamental theorem of calculus followed by the chain rule, we have the identities
\begin{align}
&H(x^\pm,u(x^\pm),\xi)_{ij} - H(x,u(x),\xi)_{ij} \nonumber \\
& = \int_0^1 \frac{d}{dt}H(x\pm the_l,u(x^\pm),\xi)_{ij}\,dt + \int_0^1 \frac{d}{dt}H(x,u(x\pm the_l),\xi)_{ij}\,dt \nonumber \\
& = \pm h\int_0^1 \frac{\partial H_{ij}}{\partial x^l}(x\pm the_l,u(x^\pm),\xi)\,dt \pm h\int_0^1 \frac{\partial H_{ij}}{\partial z}(x,u(x\pm the_l), \xi)\nabla_l u(x\pm the_l)\,dt, \nonumber 
\end{align}
and therefore
\begin{align}\label{31}
&H(x^+,u(x^+),\xi)_{ij} - 2H(x,u(x),\xi)_{ij} + H(x^-,u(x^-),\xi)_{ij} \nonumber \\
& = h\int_0^1\bigg(\frac{\partial H_{ij}}{\partial z}(x,u(x+the_l), \xi)\nabla_l u(x+the_l) - \frac{\partial H_{ij}}{\partial z}(x,u(x-the_l),\xi)\nabla_l u(x-the_l) \bigg)\,dt \nonumber \\
& \quad + h\int_0^1\bigg(\frac{\partial H_{ij}}{\partial x^l}(x+the_l,u(x^+),\xi) - \frac{\partial H_{ij}}{\partial x^l}(x-the_l,u(x^-),\xi) \bigg)\,dt.
\end{align}  
Now, by the $C_{\operatorname{loc}}^{1,1}$ regularity of $H$ and the Lipschitz regularity of the mapping $(x,z,p)\mapsto\frac{\partial H_{ij}}{\partial z}(x,z,\xi)p_l$ for fixed $\xi$ and each $l\in\{1,\dots,n\}$, we can estimate the last line of \eqref{31} from above by $Ch^2$ and the middle line of \eqref{31} from above by 
\begin{equation*}
 Ch^2 + Ch^2\int_0^1 \frac{1}{t|h|}\Big|\nabla_l u(x+the_l) - \nabla_l u(x-the_l)\Big|\,dt.
\end{equation*}
Applying these estimates in \eqref{31} and taking $\xi=\nabla u(x)$, we therefore see that 
\begin{align}\label{31'}
|E_1| & \leq C|F| +   C|F|\int_0^1|\nabla_l^{th}\nabla_l u(x)|\,dt  + C|F|\int_0^1 |\nabla_l^{-th}\nabla_l u(x)|\,dt. 
\end{align}

Using \eqref{31'}, one readily obtains the estimate $\int_{B_{R+2\rho}}\eta(\tilde{v}^+)^{q-1}|E_1| \leq  CJ_h^{(q+k-1)}$, applying the same line of argument as seen above for $E_2$. For example, by Fubini's theorem and Young's inequality, we have
\begin{align}
\int_{B_{R+2\rho}}\eta(\tilde{v}^+)^{q-1}|F|\bigg(\int_0^1&|\nabla^{\pm th}_l\nabla_l u(x)|\,dt\bigg)\,dx  = \int_0^1\int_{B_{R+2\rho}}\eta(\tilde{v}^+)^{q-1}|F||\nabla_l^{\pm th}\nabla_l u(x)|\,dx\,dt \nonumber \\
&   \leq  C J_h^{(q+k-1)} + C \int_0^1\int_{B_{R+2\rho}}|\nabla^{\pm th}_l \nabla_l u|^{q+k-1}\,dx\,dt \stackrel{\eqref{60}}{\leq} C J_h^{(q+k-1)}. \nonumber 
\end{align}
This completes the proof of Lemma \ref{71} a).  \newline

\noindent\textbf{\textit{Step 3:}} It remains to prove Lemma \ref{71} b) (see Section \ref{3n} for an alternative proof which is independent of calculations in Steps 1 and 2 above). Note that in this case, we may take $C_\Sigma=0$ in \eqref{34} and so the error terms on the last two lines of \eqref{34} formally behave like $|F||\nabla^2 u|$. By the same argument as in Step 2, the error terms $E_1$ and $E_2$ considered in Step 2 therefore satisfy $\int_{B_{R+2\rho}}\eta(\tilde{v}^+)^{q-1}|E_i| \leq CJ_h^{(q+k-1)}$, and the conclusion follows. 
 \end{proof}

\section{Proof of main results}\label{82}

In this section we use Corollaries \ref{J}, \ref{M} and \ref{N} to prove Theorems \ref{8} and \ref{56}, as outlined at the end of Section \ref{79}. We will first give a detailed proof of Case 1 of Theorem \ref{8} when $f=f(x,z)$ in Section \ref{AH}, and then indicate the necessary adjustments for remaining cases, still when $f=f(x,z)$, in Section \ref{AH'}. In Section \ref{AT}, we extend these results to the case $f=f(x,z,\xi)$, completing the proofs of Theorems \ref{8} and \ref{56}.

\subsection{Proof of Case 1 of Theorem \ref{8} when $f=f(x,z)$}\label{AH}
In this case, we recall that by Corollary \ref{J} we have the estimate
\begin{equation}\label{r'}
\begin{split}
\int_{B_{R+\rho}} {f}^k \frac{\big|\nabla\big((\tilde{v}^+)^{q/2}\big)\big|^2}{\Delta u - \operatorname{tr}(H)}  \leq \frac{Cq}{\rho^2}J_h^{(q+k-1)},
\end{split}
\end{equation}
where $u\in W_{\operatorname{loc}}^{2,q+k-1}(\Omega)\cap W_{\operatorname{loc}}^{1,\infty}(\Omega)$ ($q>1$). Let $\theta\in(0,1)$ be such that $\frac{2-\theta}{\theta}\leq q+k-1$ (we will eventually take $\theta = \frac{4}{kn+2}$). Also denote by $(2-\theta)^* \defeq n(2-\theta)/(n-2+\theta)$ the Sobolev conjugate of $2-\theta$. We first obtain from \eqref{r'} the following:

\begin{lem}\label{AL}
	Suppose $f\in C_{\operatorname{loc}}^{1,1}(\Omega\times\mathbb{R})$ is positive, $H = H_1(x,z)|\xi|^2 I$ with $H_1\in C_{\operatorname{loc}}^{1,1}(\Omega\times\mathbb{R})$ and $H_1\geq 0$, and that $u\in W_{\operatorname{loc}}^{2,q+k-1}(\Omega)\cap W_{\operatorname{loc}}^{1,\infty}(\Omega)$ ($q>1$) is a solution to \eqref{7''}. Then 
	\begin{equation}\label{-7}
	\bigg(\int_{B_{R+\rho}}(\Delta u + C_1)^{\frac{q(2-\theta)^*}{2}}\bigg)^{\frac{2}{(2-\theta)^*}} \leq \frac{Cq}{\rho^2}\bigg(\int_{B_{R+3\rho}}(\Delta u + C_1)^{\frac{2-\theta}{\theta}}\bigg)^{\frac{\theta}{2-\theta}}\int_{B_{R+3\rho}}(\Delta u + C_1)^{q+k-1}.
	\end{equation}
	\end{lem}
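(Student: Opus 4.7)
The plan is to combine the weighted Caccioppoli-type estimate \eqref{r''} from Corollary \ref{J} with a H\"older interpolation and the Sobolev--Poincar\'e inequality on the ball $B_{R+\rho}$, and then pass to the limit $h \to 0$.

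First, I would simplify \eqref{r''} using three observations: (i) $f$ is bounded below by a positive constant on the compact set $\Sigma$, since $f$ is continuous and positive; (ii) $\Delta u - \operatorname{tr}(H) \leq C(\Delta u + C_1)$, because $|\operatorname{tr}(H)|$ is bounded on $\Sigma$ and $\Delta u + C_1 \geq 1$; and (iii) $q^2/(q-1) \leq Cq$, which follows from the hypothesis $q > kn/2 - k + 1 \geq 2$ (using $k \geq 2$, $n \geq 3$). Together these reduce \eqref{r''} to the weighted Caccioppoli inequality
\[
\int_{B_{R+\rho}} \frac{|\nabla((\tilde v^+)^{q/2})|^2}{\Delta u + C_1} \leq \frac{Cq}{\rho^2} \, J_h^{(q+k-1)}.
\]

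Next, I would apply H\"older's inequality with conjugate exponents $\frac{2}{2-\theta}$ and $\frac{2}{\theta}$ to split $|\nabla((\tilde v^+)^{q/2})|^{2-\theta}$ as a product of the square-root of the above weighted integrand and the factor $(\Delta u + C_1)^{(2-\theta)/2}$:
\[
\int_{B_{R+\rho}} |\nabla((\tilde v^+)^{q/2})|^{2-\theta} \leq \bigg(\int_{B_{R+\rho}} \frac{|\nabla((\tilde v^+)^{q/2})|^2}{\Delta u + C_1}\bigg)^{\!\frac{2-\theta}{2}} \bigg(\int_{B_{R+\rho}} (\Delta u + C_1)^{\frac{2-\theta}{\theta}}\bigg)^{\!\frac{\theta}{2}}.
\]
Then I would apply the Sobolev--Poincar\'e inequality on $B_{R+\rho}$ to $(\tilde v^+)^{q/2}$, which belongs to $W^{1,2-\theta}(B_{R+\rho})$ in view of the preceding two displays together with the hypothesis $(2-\theta)/\theta \leq q+k-1$ and $u \in W^{2,q+k-1}_{\operatorname{loc}}(\Omega)$. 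Raising the resulting inequality to the power $2$, substituting the H\"older bound into the dominant gradient term, and controlling the lower-order Poincar\'e term via a further H\"older interpolation between the $L^{(2-\theta)/\theta}$ and $L^{q+k-1}$ norms (using $\Delta u + C_1 \geq 1$ and $\rho \leq R/3$), yields \eqref{-7} but with $\tilde v^+$ in place of $\Delta u + C_1$ on the LHS and $J_h^{(q+k-1)}$ in place of $\int_{B_{R+3\rho}}(\Delta u + C_1)^{q+k-1}$ on the RHS.

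Finally, I would pass to the limit $h \to 0$. By Lemma \ref{3g}, $v_h \to \Delta u$ in $L^{q+k-1}_{\operatorname{loc}}(\Omega)$, so along a subsequence $\tilde v^+ \to \Delta u + C_1$ almost everywhere in $B_{2R}$. Fatou's lemma bounds the LHS from below by $(\int_{B_{R+\rho}}(\Delta u + C_1)^{q(2-\theta)^*/2})^{2/(2-\theta)^*}$, while \eqref{3g'} (applied for $|h| \leq \rho$) dominates the $\int_{B_{R+2\rho}}(\tilde v^+)^{q+k-1}$ piece of $J_h^{(q+k-1)}$ by $C\int_{B_{R+3\rho}}(\Delta u + C_1)^{q+k-1}$, producing \eqref{-7}. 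The main technical obstacle is the lower-order Poincar\'e term from the Sobolev--Poincar\'e step: unlike in standard cutoff-based Caccioppoli derivations, it does not intrinsically carry the $\rho^{-2}$ factor, so it must be reshaped into the two-factor product structure appearing on the RHS of \eqref{-7} via a H\"older interpolation, with some case analysis depending on whether the intermediate exponent $q(2-\theta)/2$ lies inside or outside $[(2-\theta)/\theta,\, q+k-1]$.
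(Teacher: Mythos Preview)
Your proposal is correct and follows essentially the same route as the paper: H\"older to remove the weight $(\Delta u - \operatorname{tr}(H))^{-1}$, the Sobolev inequality applied to $(\tilde v^+)^{q/2}\in W^{1,2-\theta}$, and then Fatou's lemma together with Lemma~\ref{3g} to pass to the limit $h\to 0$. Your ``main technical obstacle'' is not one: the lower-order term $\big(\int (\tilde v^+)^{q(2-\theta)/2}\big)^{2/(2-\theta)}$ is handled in the paper by a single H\"older step---writing $(\tilde v^+)^{q(2-\theta)/2}=(\tilde v^+)^{(2-\theta)/2}\,(\tilde v^+)^{(q-1)(2-\theta)/2}$ with exponents $2/\theta$ and $2/(2-\theta)$---which immediately produces the two-factor structure $\big(J_h^{((2-\theta)/\theta)}\big)^{\theta/(2-\theta)}\,J_h^{(q+k-1)}$ with no case analysis needed.
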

	\begin{proof}
	The estimate \eqref{-7} will follow immediately once we establish the estimate
		\begin{align}\label{AK}
			\bigg( \int_{B_{R+\rho}}(\tilde{v}^+)^{\frac{q(2-\theta)^*}{2}}\bigg)^{\frac{2}{(2-\theta)^*}}  \leq \frac{Cq}{\rho^2}\big| J_h^{(\frac{2-\theta}{\theta})}\big|^{\frac{\theta}{2-\theta}}  J_h^{(q+k-1)},
		\end{align}
		since we can then apply Fatou's lemma and the fact that $\tilde{v}^+\rightarrow \Delta u + C_1$ a.e. as $h\rightarrow 0$ to the term on the LHS of \eqref{AK}, and Lemma \ref{3g} to the terms on the RHS of \eqref{AK}. 
		
		Keeping in mind the lower bound $\inf_{B_{2R}} f >\frac{1}{C} >0$, we first observe that by H\"older's inequality and \eqref{r'}, we have
\begin{align}
\bigg(\int_{B_{R+\rho}}\big|\nabla\big((\tilde{v}^+)^{q/2}\big)\big|^{2-\theta}\bigg)^{\frac{2}{2-\theta}} & \leq \,\, C\bigg(\int_{B_{R+\rho}}(\Delta u - \operatorname{tr}(H))^{\frac{2-\theta}{\theta}}\bigg)^{\frac{\theta}{2-\theta}}\int_{B_{R+\rho}}{f}^k\frac{\big|\nabla\big((\tilde{v}^+)^{q/2}\big)\big|^2}{\Delta u - \operatorname{tr}(H)} \nonumber  \\
& \leftstackrel{\eqref{r'}}{\leq}\frac{Cq}{\rho^2}\big| J_h^{(\frac{2-\theta}{\theta})}\big|^{\frac{\theta}{2-\theta}}  J_h^{(q+k-1)}. \label{13'}
\end{align}
On the other hand, since $\frac{q(2-\theta)}{2}\leq q+k-1$,  H\"older's inequality gives
\begin{align}\label{13''}
\bigg(\int_{B_{R+\rho}}(\tilde{v}^+&)^{\frac{q(2-\theta)}{2}}\bigg)^{\frac{2}{2-\theta}} \leq \bigg(\int_{B_{R+\rho}}(\tilde{v}^+)^{\frac{2-\theta}{\theta}}\bigg)^{\frac{\theta}{2-\theta}}\int_{B_{R+\rho}}(\tilde{v}^+)^{q-1} \leq \big| J_h^{(\frac{2-\theta}{\theta})}\big|^{\frac{\theta}{2-\theta}}  J_h^{(q+k-1)}. 
\end{align}
Applying the Sobolev inequality to $(\tilde{v}^+)^{q/2}\in W^{1,2-\theta}$, and appealing to \eqref{13'} and \eqref{13''}, we arrive at \eqref{AK}. 
\end{proof}

The inequality \eqref{-7} is of reverse H\"older-type if $\theta$ satisfies
\begin{equation*}
\frac{2-\theta}{\theta}<q+k-1< \frac{q(2-\theta)^*}{2}.
\end{equation*} 
For example, if we fix $\theta = \frac{4}{kn+2}$ and finally impose the assumption $q+k-1>\frac{kn}{2}$, we see that $(2-\theta)/\theta = kn/2 <q+k-1$ and
\begin{align}
\frac{q(2-\theta)^*}{2} - (q + k - 1 )	 > \bigg(\frac{kn}{2}-k+1\bigg)\bigg(\frac{kn}{2+kn-2k}-1\bigg) - k + 1  = 0. \nonumber 
\end{align}
In what follows, we denote
\begin{equation*}
\beta \defeq \frac{(2-\theta)^*}{2} = \frac{kn}{kn+2-2k}>1. 
\end{equation*}

\begin{proof}[Proof of Case 1 of Theorem \ref{8} when $f=f(x,z)$]
With $\theta = \frac{4}{kn+2}$, we obtain from \eqref{-7} the estimate
	\begin{align}\label{41}
	\bigg(\int_{B_{R+\rho}}(\Delta u + C_1)^{\beta q}\bigg)^{1/\beta} \leq \frac{Cq}{\rho^2}\int_{B_{R+3\rho}}(\Delta u + C_1)^{q+k-1}
	\end{align}
	for all $q>\frac{kn}{2}-k+1$ and $\rho\in(0,\frac{R}{3}]$. The constant $C$ in \eqref{41} and below now depends on $\int_{B_{R+3\rho}}(\Delta u + C_1)^{kn/2}$, which is finite due to our hypotheses.
	
	We now carry out the Moser iteration argument. Let $p>\frac{kn}{2}$ be as in the statement of Theorem \ref{8}, and define a sequence $q_j$ inductively by
	\begin{equation*}
	q_0 = p-k+1, \quad q_j = \beta q_{j-1} - k+1 \mathrm{~for~}j\geq 1. 
	\end{equation*}
	Then $q_j = \beta q_{j-1}-(k-1)  = \beta^j q_0 - (k-1)(\beta^{j-1}+\cdots + \beta + 1)$, which implies
	\begin{equation}\label{BC}
	\frac{q_j}{\beta^j} = q_0-(k-1)\bigg(\frac{1-\beta^{-j}}{\beta -1}\bigg) \stackrel{j\rightarrow \infty}{\longrightarrow} q_0 - \frac{k-1}{\beta-1} >0. 
	\end{equation}
	Note that the limit in \eqref{BC} is positive by definition of $\beta$ and the fact that $q_0>\frac{kn}{2} -k+1$. In particular, $q_j\rightarrow\infty$ as $j\rightarrow \infty$. 
	
Applying \eqref{41} iteratively with $q=q_j$ and $\rho=3^{-j-1}R$, we have for each $j\geq 0$
	\begin{align}
	\bigg(\int_{B_{(1+3^{-j-1})R}}(\Delta u + C_1)^{\beta q_j}\bigg)^{\beta^{-j-1}}&  \leq  \bigg(9^{j}Cq_j \int_{B_{(1+3^{-j})R}}(\Delta u + C_1)^{\beta q_{j-1}}\bigg)^{\beta^{-j}} \nonumber \\
	& \leftstackrel{\eqref{BC}}{\leq} \prod_{i=0}^j \big((9\beta)^i C\big)^{\beta^{-i}} \int_{B_{2R}}(\Delta u + C_1)^{p} \nonumber \\
	& \leq (9\beta)^{\sum_{i=0}^\infty i\beta^{-i}} C^{\sum_{i=0}^\infty \beta^{-i}} \int_{B_{2R}}(\Delta u + C_1)^{p}. \nonumber 
	\end{align}
	 Letting $j\rightarrow\infty$ and appealing once again to \eqref{BC}, we arrive at
\begin{equation*}
\|\Delta u + C_1\|_{L^\infty(B_R)} \leq C\bigg(\int_{B_{2R}}(\Delta u+C_1)^p\bigg)^{\big(q_0-\frac{k-1}{\beta-1}\big)^{-1}},
\end{equation*}
which implies the desired bound on $\|\nabla^2 u\|_{L^\infty(B_R)}$ by the choice of $C_1$. 
\end{proof} 

\subsection{Proof of Case 2 of Theorem \ref{8} and Theorem \ref{56} when $f=f(x,z)$}\label{AH'}

In these cases, we recall that by Corollaries \ref{M} and \ref{N} we have the estimate
\begin{equation}\label{s'}
\begin{split}
\int_{B_{R+\rho}} {f}^k \frac{\big|\nabla\big((\tilde{v}^+)^{q/2}\big)\big|^2}{\Delta u - \operatorname{tr}(H)}  \leq \frac{Cq}{\rho^2}J_h^{(q+k)},
\end{split}
\end{equation}
 where $u\in W_{\operatorname{loc}}^{2,q+k}(\Omega)\cap W_{\operatorname{loc}}^{1,\infty}(\Omega)$ ($q>1$).

\begin{proof}[Proof of Case 2 of Theorem \ref{8} and Theorem \ref{56} when $f=f(x,z)$]
We let $\theta\in (0,1)$ be such that $\frac{2-\theta}{\theta}\leq q+k$. Following the same arguments as in Section \ref{AH}, one readily obtains the following counterpart to the estimate \eqref{-7}:
\begin{equation}\label{-7'}
\bigg(\int_{B_{R+\rho}}(\Delta u + C_1)^{\frac{q(2-\theta)^*}{2}}\bigg)^{\frac{2}{(2-\theta)^*}} \leq \frac{Cq}{\rho^2}\bigg(\int_{B_{R+3\rho}}(\Delta u + C_1)^{\frac{2-\theta}{\theta}}\bigg)^{\frac{\theta}{2-\theta}}\int_{B_{R+3\rho}}(\Delta u + C_1)^{q+k}.
\end{equation}
Taking $\theta = \frac{4}{(k+1)n+2}$ and imposing $q+k>\frac{(k+1)n}{2}$, we see
\begin{equation*}
\frac{2-\theta}{\theta} = \frac{(k+1)n}{2}< q+k < \frac{q(2-\theta)^*}{2}. 
\end{equation*}
We thus obtain from \eqref{-7'} the estimate
\begin{align}
\bigg(\int_{B_{R+\rho}}(\Delta u + C_1)^{\beta q}\bigg)^{1/\beta} \leq \frac{Cq}{\rho^2}\int_{B_{R+3\rho}}(\Delta u + C_1)^{q+k}, \nonumber 
\end{align}
where
\begin{equation*}
\beta \defeq \frac{(k+1)n}{(k+1)n + 2 - 2(k+1)}> 1
\end{equation*}
and $C$ now depends on $\int_{B_{R+3\rho}}(\Delta u + C_1)^{(k+1)n/2}$. The Moser iteration argument then follows through as before, using $p>\frac{(k+1)n}{2}$ and defining $q_j$ inductively by $q_0 = p - k$ and $q_j = \beta q_{j-1} -k \text{ for }j\geq 1$.
\end{proof}

\subsection{Proof of Theorems \ref{8} and \ref{56} for $f=f(x,z,\xi)$}\label{AT}

In this section we explain how the preceding arguments may be adjusted to treat the general case $f=f(x,z,\xi)$, thus completing the proofs of Theorems \ref{8} and \ref{56}:

\begin{proof}[Proof of Theorems \ref{8} and \ref{56}]
The arguments up until \eqref{AF} remain valid for $f=f(x,z,\xi)$, but the last term in \eqref{12'} can no longer be estimated as in \eqref{AF}. Consequently, under otherwise the same hypotheses, the conclusion of Lemma \ref{P} now reads
	\begin{equation*}
\operatorname{(I_1)}_h + \operatorname{(I_2)}_h + \operatorname{(I_3)}_h + \operatorname{(I_4)}_h \leq C\rho^{-2}J_h^{(q+k-1)},
\end{equation*}
where $\operatorname{(I_1)}_h$, $\operatorname{(I_2)}_h$ and $\operatorname{(I_3)}_h$ are as before and 
\begin{equation*}
\operatorname{(I_4)}_h \defeq \sum_l \int_{B_{R+2\rho}}k\eta(\tilde{v}^+)^{q-1}f^{k-1}\Delta_{ll}^h f[u].
\end{equation*}

The estimates for $\operatorname{(I_1)}_h$, $\operatorname{(I_2)}_h$ and $\operatorname{(I_3)}_h$ are unchanged (see Lemmas \ref{21'}, \ref{83}, \ref{83'}, \ref{84} and \ref{71}), since they do not involve differentiating $f$. The integrand of $\operatorname{(I_4)}_h$ was previously a lower order term, but is now formally of third order in $u$. However, this can be treated using some of the ideas already seen in the proof of Lemma \ref{71}. Indeed, by the same argument leading to \eqref{34}, we have for each $l\in\{1,\dots,n\}$ and a.e. $x\in B_{R+2\rho}$ the estimate
\begin{align}\label{34'}
\Delta_{ll}^h f[u](x) & \geq\frac{\partial f}{\partial \xi_a}[u](x)\nabla_a \Delta_{ll}^h u(x) - C_\Sigma|\nabla_l^h\nabla u|^2 -  C_\Sigma|\nabla_l^{-h}\nabla u|^2 - C|\nabla_l^h\nabla u| - C|\nabla_l^{-h}\nabla u| \nonumber \\
& \quad + \frac{1}{h^2}\bigg(f(x^+,u(x^+),\nabla u(x)) - 2f[u](x)  +  f(x^-,u(x^-),\nabla u(x))\bigg).
\end{align}
As before, the constant $C_\Sigma>0$ is such that the mapping $\xi \longmapsto f(x,z,\xi)+C_\Sigma|\xi|^2$ is convex for all $(x,z,\xi)\in \Sigma$. Denoting all but the first term on the RHS of \eqref{34'} as error terms, it follows from \eqref{34'} that
\begin{align}\label{62}
\operatorname{(I_4)}_h \geq  \int_{B_{R+2\rho}}k\eta(\tilde{v}^+)^{q-1}f^{k-1}\frac{\partial f}{\partial \xi_a}[u]\nabla_a\tilde{v} - \int_{B_{R+2\rho}}k\eta(\tilde{v}^+)^{q-1}f^{k-1}|\text{error terms}|.
\end{align}
Now, in the same way that we dealt with the error terms in Step 2 of the proof of Lemma \ref{71}, one readily obtains $\int_{B_{R+2\rho}}k\eta(\tilde{v}^+)^{q-1}f^{k-1}|\text{error terms}| \leq CJ_h^{(q+1)}$. For the first integral on the RHS of \eqref{62}, we integrate by parts and apply H\"older's inequality to obtain
\begin{align*}
\bigg|\int_{B_{R+2\rho}}k\eta(\tilde{v}^+)^{q-1}f^{k-1}\frac{\partial f}{\partial \xi_a}[u]\nabla_a\tilde{v} \bigg| \leq C\rho^{-1}J_h^{(q+1)}.
\end{align*}
Returning to \eqref{62}, we therefore obtain $\operatorname{(I_4)}_h \geq - C\rho^{-1}J_h^{(q+1)}$. As a consequence, the estimates \eqref{r'} and \eqref{s'} hold, and the arguments of Section \ref{82} therefore apply without any changes.
\end{proof}

\section{The case $k\geq 3$ for general $H$}\label{BA}

In this final section we consider a minor extension of Theorem \ref{56}. Recall that our proof of Theorems \ref{8} and \ref{56} exploited a cancellation phenomenon between higher order terms arising from $\operatorname{(I_2)}_h$ and $\operatorname{(I_3)}_h$, where the divergence structure of $F^{ij}$ played a role in estimating $\operatorname{(I_2)}_h$. When $3\leq k\leq n$ and $H$ is not necessarily a multiple of the identity, the divergence structure given in \eqref{15} is more involved and the resulting arguments fall outside the scope of the present paper. That said, if one assumes higher integrability on $\nabla^2 u$ from the outset, the terms $\operatorname{(I_2)}_h$ and $\operatorname{(I_3)}_h$ may be estimated by using Cauchy's inequality and absorbing the resulting negative higher order terms into the positive term $\operatorname{(I_1)}_h$. This avoids the need to prove any cancellation between $\operatorname{(I_2)}_h$ and $\operatorname{(I_3)}_h$. We establish:

\begin{thm}\label{BB}
	Let $\Omega$ be a domain in $\mathbb{R}^n$ ($n\geq 3$), $f=f(x,z,\xi)\in C_{\operatorname{loc}}^{1,1}(\Omega\times\mathbb{R}\times\mathbb{R}^n)$ a positive function and $H\in C_{\operatorname{loc}}^{1,1}(\Omega\times\mathbb{R}\times\mathbb{R}^n\,;\,\operatorname{Sym}_n(\mathbb{R}))$. Suppose $3\leq k\leq n$, $p>kn$ and $u\in W_{\operatorname{loc}}^{2,p}(\Omega)$ is a solution to \eqref{7}. Then $u\in C^{1,1}_{\operatorname{loc}}(\Omega)$, and for any concentric balls $B_{R}\subset B_{2R} \Subset \Omega$ we have
	\begin{equation*}
	\|\nabla^2 u \|_{L^\infty(B_R)}\leq C,
	\end{equation*}
	where $C$ is a constant depending only on $n,p,R, f, H$ and an upper bound for $\| u\|_{W^{2,p}(B_{2R})}$.
\end{thm}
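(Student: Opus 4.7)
The plan is to follow the scheme of Sections~\ref{79}--\ref{82}, replacing the delicate cancellation between $\operatorname{(I_2)}_h$ and $\operatorname{(I_3)}_h$ (Corollaries~\ref{J}, \ref{M}, \ref{N}) by a crude Cauchy absorption into the positive term $\operatorname{(I_1)}_h$. Starting from Lemma~\ref{P} (augmented by $\operatorname{(I_4)}_h \geq -C\rho^{-1}J_h^{(q+1)}$ as in Section~\ref{AT} in the case $f=f(x,z,\xi)$), we have $\operatorname{(I_1)}_h+\operatorname{(I_2)}_h+\operatorname{(I_3)}_h \leq C\rho^{-2}J_h^{(q+k-1)}+C\rho^{-1}J_h^{(q+1)}$. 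By Lemma~\ref{22}, the pointwise bound $|\nabla_iF[u]^{ij}|\leq C(1+|\nabla^2u|^{k-1})$ gives
$$\operatorname{(I_2)}_h \geq -C\int_{B_{R+2\rho}}\eta(\tilde v^+)^{q-1}(\Delta u+C_1)^{k-1}|\nabla\tilde v|,$$
while Lemma~\ref{71}~a), combined with $|F^{ij}\tfrac{\partial H_{ij}}{\partial \xi_a}|\leq C(\Delta u+C_1)^{k-1}$, yields the same lower bound for $\operatorname{(I_3)}_h$ (up to an error term bounded by $CJ_h^{(q+k)}$). Thus the complicated divergence identity \eqref{15} for general $H$ is never needed beyond this crude pointwise bound.

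Next, the weighted Cauchy inequality
$$(\tilde v^+)^{q-1}(\Delta u+C_1)^{k-1}|\nabla\tilde v| \leq \epsilon\,\frac{(\tilde v^+)^{q-2}|\nabla\tilde v|^2}{\Delta u+C_1} + \frac{C}{\epsilon}(\tilde v^+)^{q}(\Delta u+C_1)^{2k-1}$$
allows us, upon choosing $\epsilon$ proportional to $q-1$, to absorb the first term on the right into the positive lower bound for $\operatorname{(I_1)}_h$ furnished by Lemma~\ref{21'}. The remaining term is integrable provided $u\in W^{2,q+2k-1}_{\operatorname{loc}}(\Omega)$, so the net outcome is
$$\frac{q-1}{q^2}\int_{B_{R+\rho}} f^k \frac{\big|\nabla\big((\tilde v^+)^{q/2}\big)\big|^2}{\Delta u - \operatorname{tr}(H)} \leq \frac{Cq}{\rho^2}\,J_h^{(q+2k-1)}.$$
From here one mimics Section~\ref{AH}: selecting $\theta = \tfrac{2}{kn+1}$ so that $\tfrac{2-\theta}{\theta} = kn$, the Sobolev conjugate produces $\beta \defeq \tfrac{(2-\theta)^*}{2} = \tfrac{kn}{kn-2k+1}>1$, and for $q>kn-2k+1$ one obtains (after passing to the limit $h\to 0$ as in Lemma~\ref{AL}) the reverse H\"older-type estimate
$$\Big(\int_{B_{R+\rho}}(\Delta u+C_1)^{\beta q}\Big)^{1/\beta} \leq \frac{Cq}{\rho^2}\int_{B_{R+3\rho}}(\Delta u+C_1)^{q+2k-1}.$$

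Finally, the Moser iteration is performed exactly as in Section~\ref{AH}, with sequence $q_0 = p-2k+1$ and $q_j = \beta q_{j-1} - (2k-1)$. Since $\tfrac{q_j}{\beta^j} \to q_0 - \tfrac{2k-1}{\beta-1} = q_0 - (kn-2k+1)$, the iteration converges precisely when $p > kn$, delivering the desired $L^\infty$ bound on $\Delta u + C_1$ and hence on $\nabla^2 u$ by the choice of $C_1$. The main obstacle, and the reason for the gap between the hypothesis $p>kn$ here and $p>\tfrac{(k+1)n}{2}$ in Theorem~\ref{56}, is the mismatch of exponents introduced by the Cauchy step: it trades the sharp cancellation of Corollaries~\ref{J}, \ref{M}, \ref{N} for a cubic-type quantity $(\tilde v^+)^q(\Delta u+C_1)^{2k-1}$ whose integrability demands stronger a priori regularity on $u$. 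Recovering the sharper bound $p>\tfrac{(k+1)n}{2}$ for general $H$ when $k\geq 3$ would require a finer analysis of the divergence structure of $F[u]^{ij}$ encoded in \eqref{15}, which lies beyond the present method.
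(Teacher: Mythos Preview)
Your proposal is correct and follows essentially the same approach as the paper: both estimate $\operatorname{(I_2)}_h$ and $\operatorname{(I_3)}_h$ via the crude pointwise bound $|\operatorname{div}F[u]|,\,|F|\leq C(\Delta u+C_1)^{k-1}$, apply a weighted Cauchy inequality to absorb the gradient term into the positive contribution from $\operatorname{(I_1)}_h$ (the paper writes this with a fixed small parameter $\delta$ and the weight $\operatorname{tr}(A_H)$ rather than $\Delta u+C_1$, but these are interchangeable), arrive at the gradient estimate with error $J_h^{(q+2k-1)}$, and then run the Sobolev/Moser argument with $\theta=\tfrac{2}{kn+1}$ and $\beta=\tfrac{kn}{kn-2k+1}$. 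The only slips are cosmetic (an extraneous factor of $q$ in your intermediate gradient bound, which is harmless for the iteration).
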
 

\begin{proof}
	Following the proof of Theorem \ref{56} in Section \ref{AT} but leaving the terms $\operatorname{(I_2)}_h$ and $\operatorname{(I_3)}_h$ untreated, we have for $u\in W_{\operatorname{loc}}^{2,q+k-1}(\Omega)\cap W_{\operatorname{loc}}^{1,\infty}(\Omega)$ ($q>1$)
	\begin{align}\label{AI}
	\frac{q-1}{Cq^2}\int_{B_{R+2\rho}}\eta \frac{\big|\nabla\big((\tilde{v}^+)^{q/2}\big)\big|^2}{\operatorname{tr}(A_H)} + \operatorname{(I_2)}_h + \operatorname{(I_3)}_h  \leq C\rho^{-2}J_h^{(q+k-1)}. 
	\end{align}
	We now suppose further that $\nabla^2 u \in L_{\operatorname{loc}}^{q+2k-1}(\Omega)$ ($q>1$). By Cauchy's inequality and the bound $|\operatorname{div}F[u]| \leq C(\Delta u + C_1)^{k-1}$ in equation \eqref{3h}, we see that for all $\delta>0$
	\begin{align}\label{AA}
	\operatorname{(I_2)}_h & = \frac{2}{q}\int_{B_{R+2\rho}}\eta(\tilde{v}^+)^{q/2}\nabla_i F[u]^{ij}\nabla_j(\tilde{v}^+)^{q/2} \nonumber \\
	& \geq -\frac{\delta(q-1)}{q^2}\int_{B_{R+2\rho}}\eta  \frac{\big|\nabla\big((\tilde{v}^+)^{q/2}\big)\big|^2}{\operatorname{tr}(A_H)} - \frac{1}{\delta(q-1)}\int_{B_{R+2\rho}}\eta (\tilde{v}^+)^q\operatorname{tr}(A_H)\big|\operatorname{div}F[u]\big|^2 \nonumber \\
	& \geq -\frac{\delta(q-1)}{q^2}\int_{B_{R+2\rho}}\eta  \frac{\big|\nabla\big((\tilde{v}^+)^{q/2}\big)\big|^2}{\operatorname{tr}(A_H)} - \frac{C}{\delta(q-1)}J_h^{(q+2k-1)}.
	\end{align}
	By similar reasoning, it also holds that
	\begin{align}\label{AA'}
	\operatorname{(I_3)}_h &  \stackrel{\eqref{37'}}{\geq} \int_{B_{R+2\rho}}\eta (\tilde{v}^+)^{q-1}F^{ij}\frac{\partial H_{ij}}{\partial \xi_a}[u]\,\nabla_a \tilde{v} -C J_h^{(q+k)} \nonumber \\
	&\,\,\, \geq -\frac{\delta(q-1)}{q^2}\int_{B_{R+2\rho}}\eta  \frac{\big|\nabla\big((\tilde{v}^+)^{q/2}\big)\big|^2}{\operatorname{tr}(A_H)} - \frac{C}{\delta(q-1)}J_h^{(q+2k-1)}.
	\end{align}
Taking $\delta$ sufficiently small in \eqref{AA} and \eqref{AA'}, and then substituting these estimates into \eqref{AI}, we obtain
	\begin{align}\label{AJ'}
	\frac{q-1}{q^2}&\int_{B_{R+2\rho}}\eta \frac{\big|\nabla\big((\tilde{v}^+)^{q/2}\big)\big|^2}{\operatorname{tr}(A_H)}  \leq C\rho^{-2} J_h^{(q+2k-1)}. 
	\end{align}
	The argument then proceeds as in Section \ref{AH}: we let $\theta\in(0,1)$ be such that $\frac{2-\theta}{\theta}\leq q+2k-1$ and obtain from \eqref{AJ'} the estimate
	\begin{equation}\label{-7''}
	\bigg(\int_{B_{R+\rho}}(\Delta u + C_1)^{\frac{q(2-\theta)^*}{2}}\bigg)^{\frac{2}{(2-\theta)^*}} \leq \frac{Cq}{\rho^2}\bigg(\int_{B_{R+3\rho}}(\Delta u + C_1)^{\frac{2-\theta}{\theta}}\bigg)^{\frac{\theta}{2-\theta}}\int_{B_{R+3\rho}}(\Delta u + C_1)^{q+2k-1}.
	\end{equation}
	Taking $\theta = \frac{2}{kn+1}$ and imposing $q+2k-1>kn$, we see that $
\frac{2-\theta}{\theta} = kn<q+2k-1 < \frac{q(2-\theta)^*}{2}$, and we therefore obtain from \eqref{-7''} the estimate
	 \begin{align}
	\bigg(\int_{B_{R+\rho}}(\Delta u + C_1)^{\beta q}\bigg)^{1/\beta} \leq \frac{Cq}{\rho^2}\int_{B_{R+3\rho}}(\Delta u + C_1)^{q+2k-1},\nonumber 
	\end{align}
	where $\beta \defeq kn/(kn+1-2k) > 1$ and $C$ now depends on $\int_{B_{R+3\rho}}(\Delta u + C_1)^{kn}$. The Moser iteration argument then goes through as before, giving the desired conclusion. 
\end{proof}

\appendix

\section{A remark on the regularity of solutions to the $\sigma_2$-Yamabe equation obtained by vanishing viscosity}\label{APPA}

Let $(M^4,g_0)$ be a 4-manifold with scalar curvature $R_0>0$ and Schouten tensor $A_0$ satisfying $\int_{M^4}\sigma_2(A_0)\,dv_0>0$.  In \cite{CGY02a}, the existence of smooth solutions $g_{w_\delta}=e^{2w_{\delta}}g_0$ with positive scalar curvature to the fourth order equation
\begin{equation}\tag{A.1}\label{AP2}
\sigma_2(A_{g_{w_\delta}}) = \frac{\delta}{4}\Delta_{g_{w_\delta}} R_{g_{w_\delta}} - 2\gamma_1|\eta|_{g_{w_\delta}}^2
\end{equation}
is established for each $\delta\in(0,1]$, where $\eta$ is any fixed non-vanishing $(0,2)$-tensor and $\gamma_1<0$ is the conformal invariant obtained by integrating both sides of \eqref{AP2}. Moreover, solutions are shown to satisfy the uniform estimates
\begin{equation}\tag{A.2}\label{AP1}
\|w_\delta\|_{W^{2,s}(M^4,g_0)}\leq C\quad\mathrm{for~all~}\delta\in(0,1], \quad 1\leq s<5,
\end{equation} 
where the constant $C=C(s)$ is independent of $\delta$. A heat flow argument is then applied to obtain a conformal metric $g$ with $\lambda(A_g)\in\Gamma_2^+$. In this appendix, we show that in the case that $(M^4,g_0)$ is locally conformally flat, we may take the limit $\delta\rightarrow 0$ more directly in \eqref{AP2} to obtain the desired conformal metric with $\lambda(A_g)\in\Gamma_2^+$. More precisely, using Theorem \ref{3} and a result of \cite{LN20}, we show that, along a subsequence, the solutions $w_\delta$ converge weakly to a smooth solution of the equation $\sigma_2(A_{g_{w_\delta}})=-2\gamma_1|\eta|_{g_{w_\delta}}^2>0$. 

To this end, fix $4<s<5$. We first observe that by \eqref{AP1}, we can find a sequence $\delta_i\rightarrow 0$ for which $w_i\defeq w_{\delta_i}$ converges weakly in $W^{2,s}(M^4,g_0)$, say to $w\in W^{2,s}(M^4,g_0)$. By the Morrey embedding $W^{2,s}(M^4,g_0)\hookrightarrow C^{1,1-\frac{4}{s}}(M^4,g_0)$, we may assume $w_i\rightarrow w$ in $C^{1,\alpha}(M^4,g_0)$ for some $\alpha>0$. It then follows from \cite[Proposition 5.3]{LN20} and the estimate \eqref{AP1} that for all $\phi\in C^0(M^4)$, we have
\begin{equation}\tag{A.3}\label{AP3}
\lim_{i\rightarrow \infty} \int_{M^4}\sigma_2(A_{g_{w_i}})\phi\,dv_0= \int_{M^4}\sigma_2(A_{g_{w}})\phi\,dv_0.
\end{equation}
 Substituting the equation \eqref{AP2} into \eqref{AP3} and integrating by parts, we therefore see that
\begin{align}
\int_{M^4} \sigma_2(A_{g_w})\phi\,dv_0 & = \lim_{i\rightarrow\infty}\int_{M^4} \bigg(\frac{\delta_i}{4}R_{g_{w_i}}\Delta_{g_{w_i}}\phi-2\gamma_1|\eta|_{g_{w_i}}^2\phi\bigg)\,dv_0 = -\int_{M^4}2\gamma_1|\eta|_{g_w}^2\phi\,dv_0 \nonumber 
\end{align}
for all $\phi\in C^2(M^4,g_0)$. It follows that $w\in W^{2,s}(M^4,g_0)$ solves \begin{equation}\tag{A.4}\label{AP4}
\sigma_2(A_{g_w}) = -2\gamma_1|\eta|_{g_w}^2>0\quad\mathrm{a.e.~in~}M^4.
\end{equation}
Moreover, as $R_{g_{w_i}}>0$ for each $i$, it follows that $R_{g_w}\geq 0$, and by \eqref{AP4} we therefore have $R_{g_w}>0$ a.e. If $(M^4,g_0)$ is locally conformally flat, we therefore obtain from Theorem \ref{3} that $u\defeq e^{-w}\in C^{1,1}(M^4,g_0)$, and consequently \eqref{AP4} is uniformly elliptic at $w$. 

At this point, we apply the Evans-Krylov theorem to obtain $u\in C^{2,\alpha}(M^4,g_0)$.  Indeed, by the proof of \cite[Theorem 6.6]{CC95}, it suffices to observe that, by Lemmas \ref{CC} and \ref{H'},  $v=\sum_l \Delta_{ll}^h u$ is a subsolution to a uniformly elliptic linear equation, namely
\begin{equation*}
F^{ij}\nabla_i\nabla_j v + B^iD_iv \geq C,
\end{equation*}
where $F^{ij}$ is uniformly elliptic and $F^{ij}$, $B^i$ and $C$ are bounded.  Furthermore, since $f(x,u) \defeq -2\gamma_1|\eta(x)|_{u^{-2}g_0}^2 = -2\gamma_1 u^4|\eta(x)|_{g_0}^2$ is smooth, standard elliptic regularity ensures that $u$ (and hence $w$) belongs to $C^\infty(M^4,g_0)$.

\section{Proof of Lemma \ref{3g}}\label{APPB}

	The proof is a standard argument using Taylor's theorem. We claim that 
	\begin{align}\label{1}
	\|v_h-\Delta u\|_{L^s(\Omega')} &\leq \sum_{l=1}^n\int_0^1\Big\|\nabla_l\nabla_lu(x+the_l)-\nabla_l\nabla_l u(x)\Big\|_{L^s(\Omega')} \,dt \nonumber \\
	& \quad + \sum_{l=1}^n\int_0^1\Big\|\nabla_l\nabla_lu(x-the_l)-\nabla_l\nabla_l u(x)\Big\|_{L^s(\Omega')} \,dt \tag{B.1}
	\end{align}
	for all $u\in W^{2,s}(\Omega)$ and $\Omega'\Subset\Omega$ satisfying $|h|<\operatorname{dist}(\Omega',\partial\Omega)$, from which the conclusion follows by the continuity of the translation operator in $L^s(\Omega)$. By density it suffices to prove \eqref{1} for $u\in C^2(\Omega)$. Let $\Omega'$ be as above. Then for each $x\in \Omega'$ and $l\in\{1,\dots,n\}$, we have by Taylor's theorem
	\begin{align}
	u(x\pm he_l) = u(x) \pm h\nabla_l u(x) + h^2\int_0^1 (1-t)\nabla_l\nabla_l u(x\pm the_l)\,dt,\nonumber
	\end{align}
	and thus
	\begin{align}\label{2}
	v_h(x) - \Delta u(x) & = \sum_{l=1}^n \int_0^1 (1-t)\bigg(\nabla_l\nabla_l u(x+the_l) - \nabla_l\nabla_l u(x)\bigg)\,dt \nonumber \\
	& \quad + \sum_{l=1}^n\int_0^1(1-t)\bigg(\nabla_l\nabla_l u(x-the_l) - \nabla_l\nabla_l u(x)\bigg)\,dt. \tag{B.2} 
	\end{align}
	Let $s'$ be such that $\frac{1}{s}+\frac{1}{s'}=1$. It follows from \eqref{2} and H\"older's inequality that for all $g\in L^{s'}(\Omega')$ satisfying $\|g\|_{L^{s'}(\Omega')}\leq 1$, we have
	
	\begin{align}\label{90}
	\int_{\Omega'}\big(v_h(x)-\Delta u(x)\big)g(x)\,dx & = \sum_{l=1}^n\int_0^1(1-t)\int_{\Omega'}\bigg(\nabla_l\nabla_l u(x+the_l) - \nabla_l\nabla_l u(x)\bigg)g(x)\,dx\,dt \nonumber \\
	& \quad + \sum_{l=1}^n\int_0^1(1-t)\int_{\Omega'}\bigg(\nabla_l\nabla_l u(x-the_l) - \nabla_l\nabla_l u(x)\bigg)g(x)\,dx\,dt \nonumber \\
& \leq  \sum_{l=1}^n\int_0^1\Big\|\nabla_l\nabla_lu(x+the_l)-\nabla_l\nabla_l u(x)\Big\|_{L^s(\Omega')} \,dt\nonumber \\
	& \quad + \sum_{l=1}^n\int_0^1\Big\|\nabla_l\nabla_lu(x-the_l)-\nabla_l\nabla_l u(x)\Big\|_{L^s(\Omega')} \,dt.\tag{B.3}
	\end{align}
Taking the supremum over such $g$ in \eqref{90}, we obtain \eqref{1}. 

\bibliographystyle{siam}

\end{document}